\newtheorem{theorem}{Theorem}[section]
\newtheorem{proposition}[theorem]{Proposition}
\newtheorem{lemma}[theorem]{Lemma}
\newtheorem{corollary}[theorem]{Corollary}
\newtheorem{conjecture}[theorem]{Conjecture}
\newtheorem{problem}[theorem]{Problem}
\theoremstyle{definition}
\newtheorem{definition}[theorem]{Definition}
\newtheorem{example}[theorem]{Example}
\newtheorem{remark}[theorem]{Remark}
\DeclareMathOperator{\Ann}{Ann}
\DeclareMathOperator{\supp}{supp}
\DeclareMathOperator{\deck}{Deck}
\DeclareMathOperator{\ab}{ab}
\DeclareMathOperator{\Ima}{im}
\newcommand{\Z}{\mathbb{Z}}
\newcommand{\N}{\mathbb{N}}
\newcommand{\Q}{\mathbb{Q}}
\DeclareMathOperator{\fp}{FP}
\def\injects{\hookrightarrow}
\newcommand{\isom}{\cong}
\newcommand{\normal}[1]{\langle\!\langle #1 \rangle\!\rangle}
\begin{document}

\title{Lifting relations in right orderable groups}

\author{Marco Linton}
\address{Instituto de Ciencias Matemáticas, Fuencarral-El Pardo, 28049, Madrid, Spain}
\email{marco.linton@icmat.es}

\maketitle

\begin{abstract}
In this article we study the following problem: given a chain complex $A_*$ of free $\Z G$-modules, when is $A_*$ isomorphic to the cellular chain complex of some simply connected $G$-CW-complex? Such a chain complex is called realisable. Wall studied this problem in the 60's and reduced it to a problem involving only the second differential $d_2$, now known as the relation lifting problem. We show that if $G$ is right orderable and $d_2$ is given by a matrix of a certain form, then $A_*$ is realisable. As a special case, we solve the relation lifting problem for right orderable groups with cyclic relation module.
\end{abstract}

\section{Introduction}

If $X$ is a CW-complex with $\pi_1(X) = G$, then the cellular chain complex $C_*(\widetilde{X})$, where $\widetilde{X}$ denotes the universal cover of $X$, has the structure of a complex of free $\Z G$-modules with a natural free $\Z G$-bases for $C_n(\widetilde{X})$ in correspondence with $n$-cells of $X$. A natural question that arose in C. T. C. Wall's work on finiteness conditions for CW-complexes \cite{Wa65,Wa66} was the following: given a complex of free $\Z G$-modules $A_*$, together with chosen free bases, does there exist a simply connected $G$-CW-complex $\widetilde{X}$ and a $\Z G$-isomorphism $C_*(\widetilde{X}) \to A_*$ sending natural basis elements of each $C_n(\widetilde{X})$ to the given basis elements of $A_n$?

Call a $\Z G$-complex $A_*$ \emph{realisable} if it admits a positive answer to the above question and call the isomorphism $C_*(\widetilde{X})\to A_*$ a \emph{realisation}. Call a $\Z G$-complex $A_*$ \emph{admissible} if the subcomplex 
\[
\begin{tikzcd}
A_2 \arrow[r, "d_2"] & A_1 \arrow[r, "d_1"] & A_0
\end{tikzcd}
\]
is realisable. Wall proved the following theorem in \cite{Wa66}, reducing the realisability problem to realisability in dimension two.

\begin{theorem}[Wall]
\label{wall_theorem}
Let $X$ be a connected CW-complex with $G = \pi_1(X)$. Let $A_*$ be an admissible complex of free $\Z G$-modules with chosen bases and let $f\colon A_*\to C_*(\widetilde{X})$ be a $\Z G$-chain homotopy equivalence sending basis elements in dimension zero to 0-cells. Then there is a CW-complex $Y$, a cellular homotopy equivalence $g\colon Y\to X$ and a realisation $h\colon C_*(\widetilde{Y})\to A_*$ such that $f\circ h$ is induced by $g$.
\end{theorem}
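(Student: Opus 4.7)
The plan is to build $Y$ skeleton by skeleton, inductively extending a realisation $h_{\leq n}\colon C_*(\widetilde{Y^{(n)}})\to A_{\leq n}$ and a cellular map $g^{(n)}\colon Y^{(n)}\to X$ whose lift to universal covers induces $f\circ h_{\leq n}$ on cellular chains. The base case $n\leq 2$ follows from admissibility: it produces a $G$-CW-complex $\widetilde{Y^{(2)}}$ realising $A_{\leq 2}$, whose quotient is a $2$-complex $Y^{(2)}$ with $\pi_1(Y^{(2)})\isom G$. The map $g^{(2)}$ is defined on $0$-cells by the hypothesis, on $1$-cells by lifting the edge paths specified by $f\circ h_{\leq 2}$, and on $2$-cells by filling the disks in $X$ whose boundaries are the prescribed loops.

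For the inductive step with $n\geq 3$, replace $X$ by the mapping cylinder of $g^{(n-1)}$ so that $Y^{(n-1)}\subseteq X$ is a CW-pair. For each $\Z G$-basis element $e_\alpha\in A_n$, set $z_\alpha=h_{\leq n-1}^{-1}(d_n(e_\alpha))$, an $(n-1)$-cycle in $\widetilde{Y^{(n-1)}}$. Because $\widetilde{g^{(n-1)}}_*=f\circ h_{\leq n-1}$ on chains, the image of $z_\alpha$ in $C_{n-1}(\widetilde{X})$ equals $d_n(f(e_\alpha))$, a boundary; hence $[z_\alpha]\mapsto 0$ in $H_{n-1}(\widetilde{X})$. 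The inductive identification forces $H_k(\widetilde{X},\widetilde{Y^{(n-1)}})=0$ for $k<n$ via the long exact sequence of the pair, and both spaces are simply connected, so the relative Hurewicz theorem yields $\pi_n(\widetilde{X},\widetilde{Y^{(n-1)}})\isom H_n(\widetilde{X},\widetilde{Y^{(n-1)}})$. Lift $[z_\alpha]$ through the connecting homomorphism to a class $\xi_\alpha$ and represent it by a map of pairs $(D^n,S^{n-1})\to(X,Y^{(n-1)})$; attaching one such $n$-cell per $G$-orbit of $A_n$-basis elements produces $Y^{(n)}$, while extending $g^{(n-1)}$ by the chosen maps and setting $h_{\leq n}$ to send each new cell to the corresponding $e_\alpha$ preserves all inductive hypotheses.

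The principal technical obstacle is that the class $\xi_\alpha$ is determined only up to the image of $\pi_n(\widetilde{X})\to\pi_n(\widetilde{X},\widetilde{Y^{(n-1)}})$, so we must exploit this freedom to ensure that $\widetilde{g^{(n)}}_*$ agrees with $f\circ h_{\leq n}$ on chains, not merely up to chain homotopy. Concretely, after an initial choice, the discrepancy between $\widetilde{g^{(n)}}_*(\text{new cell})$ and $f(e_\alpha)$ in $C_n(\widetilde{X})$ is an $n$-cycle in $\widetilde{X}$ and can be absorbed by adding a suitable element of $\pi_n(\widetilde{X})$ to the extension of the characteristic map over $D^n$. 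Once the induction is complete, $Y=\bigcup Y^{(n)}$ and $g=\bigcup g^{(n)}$ satisfy $\widetilde{g}_*=f\circ h$; since $f$ is a chain homotopy equivalence and the universal covers are simply connected CW-complexes, Whitehead's theorem promotes $g$ to a homotopy equivalence, completing the proof.
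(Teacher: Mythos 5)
The paper cites this theorem to Wall~\cite{Wa66} and does not reproduce a proof, so there is no internal argument to compare against; your skeleton-by-skeleton construction via the relative Hurewicz theorem is the standard Wall-style approach and is essentially correct. The one step that deserves more care is the absorption of the discrepancy cycle $z$: you assert that $z$ can be killed by adding an element of $\pi_n(\widetilde{X})$ to the extension over $D^n$, but the Hurewicz map $\pi_n(\widetilde{X})\to H_n(\widetilde{X})$ need not be surjective, so this is not automatic for an arbitrary cycle. The saving feature, which you should make explicit, is that $\widetilde{Y^{(n-1)}}$ is in fact $(n-2)$-connected (its cellular chain complex is the truncation $A_{\leq n-1}$, which is acyclic in degrees $0<k<n-1$, so absolute Hurewicz applies), and hence $\pi_{n-1}(\widetilde{Y^{(n-1)}})\to H_{n-1}(\widetilde{Y^{(n-1)}})$ is an isomorphism. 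Consequently the boundary $\partial\xi_\alpha\in\pi_{n-1}(\widetilde{Y^{(n-1)}})$ is uniquely determined by its chain image $z_\alpha$, the freedom in $\xi_\alpha$ is exactly $\mathrm{im}\big(\pi_n(\widetilde{X})\to\pi_n(\widetilde{X},\widetilde{Y^{(n-1)}})\big)$, and choosing $\xi_\alpha$ to correspond to the class of $e_\alpha$ under the relative Hurewicz isomorphism $\pi_n(\widetilde{X},\widetilde{Y^{(n-1)}})\cong H_n(\widetilde{X},\widetilde{Y^{(n-1)}})\cong A_n/\mathrm{im}(d_{n+1})$ makes $z$ a boundary rather than merely a cycle; a boundary is then absorbed by a suitable choice of cellular approximation of the characteristic map. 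With this observation supplied, your argument closes.
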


There are two obvious necessary conditions needed for $A_*$ to be admissible. The first is that $H_0(A_*) = \Z$ and $H_1(A_*) = 0$. The second is that for each $E\in B_1$, we must have $d_1(E) = g_1V_1 - g_0V_0$ for some $g_0, g_1\in G$ and $V_0, V_1\in B_0$. If $A_*$ satisfies these two conditions, then $A_*$ is said to be an \emph{algebraic} complex. The first condition is necessary since $H_0(X) = \Z$ and $H_1(X) = 0$ for any simply connected CW-complex $X$ whereas the second is necessary because of the particularly rigid nature of boundary maps of 1-cells.

Motivated by \cref{wall_theorem}, Wall conjectured that every algebraic $\Z G$-complex $A_*$ is admissible, indicating that the expected answer should be negative. This conjecture became known as the \emph{relation lifting problem} and has been extensively studied over the last few decades. See work of Harlander \cite{Ha00,Ha15,Ha18} for surveys on this and related problems. In the next section, we give the more common equivalent statement in terms of relation modules.

The first counterexample to Wall's conjecture was provided by Dunwoody \cite{Du72}. Dunwoody's counterexample was the one-relator group $G = \Z*\Z/5\Z$. His proof used the Magnus property for free groups and relied on the existence of non-trivial units in the group ring $\Z G$ arising from torsion. Further counterexamples were provided by Bestvina--Brady in \cite{BB97} who constructed examples of groups of type $\fp_2(\Z)$ that are not finitely presented. Indeed, if $G$ is a group of type $\fp_2(\Z)$ that is not finitely presented, then there is an algebraic $\Z G$-complex $A_*$ with $A_i$ finitely generated for all $i\leqslant 2$ (by definition), but no simply connected cocompact $G$-CW-complex can exist.

When the second differential $d_2$ is given by a matrix of a certain form, we establish Wall's conjecture within the class of right orderable groups. A finite (not necessarily square) matrix is \emph{lower trapezoidal} if all entries on the right hand diagonal are non-zero and all entries above it are zero. For a more formal definition encompassing infinite matrices, see \cref{trapezoidal_defn}.

\begin{theorem}
\label{main_intro}
Let $G$ be a right orderable group and let $A_*$ be an algebraic $\Z G$-complex with given free bases. If $d_2$ is given by a lower trapezoidal matrix over the given bases (acting from the right), then $A_*$ is admissible.
\end{theorem}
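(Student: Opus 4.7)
The plan is a two-stage inductive construction of a simply-connected free $G$-CW-complex $\widetilde{X}$ whose cellular chain complex matches $(A_2 \to A_1 \to A_0)$. For Stage 1, realise the 1-skeleton canonically: take a free $G$-orbit of 0-cells for each $V \in B_0$ and a free $G$-orbit of 1-cells for each $E \in B_1$, oriented from $g_0 V_0$ to $g_1 V_1$ as dictated by $d_1(E) = g_1 V_1 - g_0 V_0$. This produces $\widetilde{X}^{(1)}$ with $C_*(\widetilde{X}^{(1)}) \cong (A_1 \to A_0)$, connected because $H_0(A_*) = \Z$. Write $F := \pi_1(\widetilde{X}^{(1)}/G)$ and $R := \ker(F \twoheadrightarrow G)$, so $\widetilde{X}^{(1)}$ is the $R$-cover of its $G$-quotient.

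For Stage 2, enumerate $B_2 = \{R_j\}$ and $B_1 = \{E_k\}$ compatibly with the trapezoidal structure, so that $d_2(R_j) = \alpha_j E_{\phi(j)} + \sum_{k<\phi(j)} \alpha_{j,k} E_k$ with $\alpha_j \in \Z G$ nonzero and $\phi$ strictly increasing. At each step $j$, attach a $G$-orbit of 2-cells along a combinatorial loop $\gamma_j$ in $\widetilde{X}^{(1)}$ whose cellular chain is $d_2(R_j)$. Any finite-support 1-cycle is the chain of such a loop in a connected 1-complex (inserting back-and-forth traversals if needed to join disconnected support), so existence of $\gamma_j$ is automatic. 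The delicate point is \emph{choice}: although $H_1(A_*) = 0$ ensures the $\gamma_j$ generate $R^{\ab} = \ker(d_1)$ as a $\Z G$-module, we must select them so that the $G$-translates of $\{\gamma_j\}$ normally generate $R$ in $F$ --- a strictly stronger condition, equivalent to $\pi_1(\widetilde{X}) = 0$.

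Right orderability is the tool for this choice. Fixing a right order $<$ on $G$, I would choose $\gamma_j$ so that a canonical extremal feature --- the $<$-maximal lift $g_j E_{\phi(j)}$ of the pivot edge in the support of $\alpha_j$ --- is handled in a controlled way. After selecting a $<$-compatible $G$-equivariant spanning tree of $\widetilde{X}^{(1)}$ with the pivots among its non-tree edges, the aim is to arrange that $\gamma_j$, written as a word in the resulting free basis of $R$, has the generator corresponding to $g_j E_{\phi(j)}$ as its extremal letter. Attaching the 2-cell then furnishes a relation that solves for this generator in terms of ``earlier'' ones; iterating (transfinitely when $B_2$ is infinite) kills every free generator of $R$, yielding $\pi_1(\widetilde{X}^{(2)}) = 0$ and hence realisability.

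The main obstacle is making this leading-letter reduction precise. When $\alpha_j$ is a sum of several group elements, the $<$-maximal edge $g_j E_{\phi(j)}$ appears a priori only as an extremal edge traversed by $\gamma_j$, not automatically as the leading letter of $\gamma_j$ in the free basis. Bridging this requires careful interaction between the right invariance of $<$, the trapezoidal constraint (which forbids $E_k$ with $k > \phi(j)$ from appearing in $d_2(R_j)$), and the combinatorics of loops in $\widetilde{X}^{(1)}$. Secondary technical points include arranging the spanning tree so pivots are exactly the non-tree edges, and running the induction uniformly when $B_2$ is infinite.
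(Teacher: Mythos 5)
Your Stage 1 matches the paper's construction of the 1-skeleton, and your instinct that the trapezoidal structure and right-orderability together should let you attach 2-cells one at a time in a controlled way is also in the spirit of the paper. But your Stage 2 has a genuine gap, and it lies exactly where you locate it yourself: there is no mechanism in your plan for passing from ``the $\gamma_j$ generate $\ker(d_1) \cong R^{\ab}$ as a $\Z G$-module'' to ``the $G$-translates of the $\gamma_j$ normally generate $R$ in $F$.'' The ``leading-letter'' idea does not bridge this. The pivot coefficient $\alpha_j$ is an arbitrary non-zero element of $\Z G$, so the loop $\gamma_j$ will in general traverse many $G$-translates of the pivot edge $E_{\phi(j)}$, and isolating a single $<$-maximal traversal as an ``extremal letter'' in a Nielsen/Tietze sense requires the maximal generator to appear exactly once with coefficient $\pm 1$; nothing in your setup arranges this, and a right order on $G$ does not by itself induce a useful order on the free group $R$ or its free basis. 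Even if you could isolate such a letter, ``solving for'' a free generator of $R$ one relation at a time (transfinitely) is not a correct argument for normal generation: $R$ is infinitely generated and normal closure interacts with conjugation in a way your elimination scheme does not account for.

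What the paper actually uses is quite different and substantially deeper. First, right-orderability is used not to order a free basis of $R$ but via the \emph{non-engulfing} property of $\Z G$ (\cref{ro}), which feeds into \cref{inductive_lemma}: the pivot chain $d_2(C_i)$ can be replaced, modulo the previously attached cells, by $u \otimes [\lambda_i]$ for an \emph{embedded} cycle $\lambda_i$ and a unit $u$. Embeddedness is the crucial combinatorial control you are missing --- it guarantees that the new attaching map has no proper non-empty subword trivial in the quotient, i.e.\ that each step is an elementary reduction. Second, and this is the real engine, one must still prove that attaching along these embedded cycles yields a \emph{simply connected} complex; this is exactly the relation-lifting step and is handled by \cref{perfect_subgroup}, a theorem about normal $R$-perfect subgroups of one-relator products of locally indicable groups. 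Its proof runs through Howie's Freiheitssatz (\cref{inclusion}), the Magnus-style hierarchy for one-relator products (\cref{hierarchy}, built from $\Z$-domains and $\Z$-covers), and a Mayer--Vietoris injectivity argument (\cref{homology_injection}, \cref{tree_injection}). Your proposal contains no substitute for this machinery; the inductive ``kill every free generator'' scheme you sketch would amount to a direct combinatorial proof of relation lifting, which is precisely the problem the whole paper is designed to avoid attacking head-on.
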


\cref{main_intro} is a consequence of our main technical result, \cref{R-relative_main}, which establishes a relative version of admissibility and is valid for algebraic $RG$-complexes where $R$ is an arbitrary domain, not necessarily $=\Z$. Note that we do not assume finite generation of the $A_i$. By additionally assuming that $A_i$ is finitely generated for all $i\leqslant 2$, \cref{main_intro} implies that $G$ is finitely presented.

The CW-complexes with cellular chain complexes as those arising in \cref{main_intro} are homotopy equivalent to \emph{reducible} complexes, introduced by Howie in \cite{Ho82}. A $2$-complex $X$ is reducible (without proper powers) if there are subcomplexes $X_0\subset X_1\subset\ldots \subset X$ such that $X_0$ is a 1-complex, $X = \bigcup X_i$ and if each $X_{i+1}$ is obtained from $X_i$ by attaching a 1-cell $e_i$ and a $2$-cell whose attaching map is not freely homotopic into $X_i$ within $X_i\cup e_i$ (and is not a proper power). Howie proved that if $X$ is a reducible complex without proper powers, then $G = \pi_1(X)$ is locally indicable, and hence right orderable by the Burns--Hale Theorem \cite{BH72}. Using Howie's results we show in \cref{lower_trapezoidal} that the boundary map $\partial_2\colon C_2(\widetilde{X})\to C_1(\widetilde{X})$ is given by a lower trapezoidal $\Z G$-matrix. This result was already known to experts, but an explicit reference could not be found. By understanding further the CW-complex that arises from \cref{main_intro}, we prove the following which can be considered as an algebraic converse.

\begin{theorem}
\label{reducible_intro}
Let $X$ be a connected CW-complex and suppose that $G = \pi_1(X)$ is right orderable. The following are equivalent:
\begin{enumerate}
\item $X$ is homotopy equivalent to a (finite) reducible $2$-complex.
\item There is an algebraic $\Z G$-complex
\[
\begin{tikzcd}
A_2 \arrow[r, "d_2"] & A_1 \arrow[r, "d_1"] & A_0
\end{tikzcd}
\]
with $d_2$ given by a lower trapezoidal matrix over the given bases (and with each $A_i$ finitely generated) and a chain homotopy equivalence $A_*\to C_*(\widetilde{X})$ sending basis elements of $A_0$ to natural basis elements of $C_0(\widetilde{X})$.
\end{enumerate}
\end{theorem}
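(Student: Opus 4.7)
The plan is to handle the two implications separately, with the easy direction first. For $(1) \Rightarrow (2)$, let $Y$ be a finite reducible 2-complex with a cellular homotopy equivalence $g : Y \to X$, and set $A_* = C_*(\widetilde{Y})$ with its natural $\Z G$-basis ordered compatibly with the reducible filtration of $Y$. The chain map on universal covers induced by $g$ furnishes the required chain homotopy equivalence $A_* \to C_*(\widetilde{X})$, and automatically carries 0-cell basis elements to 0-cell basis elements since cellular maps preserve the 0-skeleton; the lower trapezoidal form of $d_2$ is the content of \cref{lower_trapezoidal}.

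For $(2) \Rightarrow (1)$, I would first apply \cref{main_intro} to deduce that $A_*$ is admissible, and then apply \cref{wall_theorem} to the given chain homotopy equivalence to obtain a finite CW-complex $Y$, a cellular homotopy equivalence $Y \to X$, and a realisation $C_*(\widetilde{Y}) \to A_*$. Since $A_*$ is concentrated in dimensions $\leqslant 2$, $Y$ is a 2-complex, and its cells biject with the bases of $A_*$ in a manner transferring the lower trapezoidal structure of $d_2$ onto $\partial_2$. The candidate reducible filtration $X_0 \subset X_1 \subset \cdots \subset Y$ is then defined by taking $X_0$ to be the union of the 0-cells together with the 1-cells $V_1, \ldots, V_m$ whose columns in $d_2$ have no diagonal entry, and $X_i = X_{i-1} \cup V_{m+i} \cup E_i$, where $V_{m+i}$ and $E_i$ are the 1-cell and 2-cell corresponding to the $i$-th diagonal entry.

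The main obstacle is verifying that this filtration actually exhibits $Y$ as reducible, namely that the attaching map of each $E_i$ is not freely homotopic into $X_{i-1}$ inside $X_{i-1} \cup V_{m+i}$ and is not a proper power. The nonvanishing of the diagonal $\Z G$-coefficient $\lambda_{i,m+i}$ of $d_2(E_i)$ forces the lifted boundary of $E_i$ in $\widetilde{Y}$ to traverse some lift of $V_{m+i}$, but passing from this algebraic statement to topological essentiality in the 1-complex $X_{i-1} \cup V_{m+i}$ requires genuine input from right orderability, most likely via the fact that the integral group ring of a right orderable group has no nontrivial zero divisors (preventing cancellations in the cyclically reduced attaching word) together with torsion-freeness (to address proper powers, possibly combined with a local indicability style argument). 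An alternative and perhaps cleaner route is to read the reducible structure of $Y$ off directly from the proof of \cref{main_intro}, which is expected to construct $Y$ cell by cell in the trapezoidal order; this would make the reducible filtration and the essentiality of each attaching map automatic.
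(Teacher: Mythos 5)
Your treatment of $(1) \Rightarrow (2)$ is correct and matches the paper: set $A_* = C_*(\widetilde{Y})$ for a reducible model $Y$, invoke \cref{lower_trapezoidal}, and let the homotopy equivalence supply the chain map.

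For $(2) \Rightarrow (1)$, your primary route has a genuine gap, and it is somewhat more serious than you indicate. Wall's theorem produces a $2$-complex $Y$ together with a basis-preserving isomorphism $C_*(\widetilde{Y}) \to A_*$, but this constrains the attaching map of each $2$-cell of $Y$ only modulo commutators: the corresponding row of the trapezoidal matrix determines (via Fox derivatives) the image of the attaching word in the relation module $N_{\ab}$, not the word itself as an element of $N$. A $2$-complex realising a lower trapezoidal $d_2$ can perfectly well have attaching maps that backtrack over the diagonal edge in a way that destroys free homotopy essentiality, and the nonvanishing of a $\Z G$-coefficient does not by itself rule this out. So aiming to prove that the specific $Y$ returned by Wall's theorem is literally reducible, via "input from right orderability", is the wrong target; what is both needed and achievable is that $Y$ is \emph{homotopy equivalent} to a reducible complex.

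Your alternative route is the one the paper takes. Behind \cref{main_intro} sits a more precise technical theorem, \cref{relative_main}, whose proof builds a realising $G$-$2$-complex $Z$ cell-by-cell in the trapezoidal order, arranging each $2$-cell's attaching map to be an embedded cycle, and which records explicitly (item (3) of its conclusion) that $G\backslash Z$ is homotopy equivalent to a $2$-complex reducible to $G\backslash X$. Feeding this realisation into \cref{wall_theorem} applied to the given chain homotopy equivalence $A_* \to C_*(\widetilde{X})$ yields $Y = G\backslash Z$ together with a homotopy equivalence $Y \to X$, and hence $X$ is homotopy equivalent to a finite reducible $2$-complex. Your slogan of "reading the reducible structure off the proof of \cref{main_intro}" is exactly right; what your proposal lacked is the observation that this structure is delivered in the form of a homotopy equivalence to a reducible complex, rather than as literal reducibility of whatever complex Wall's theorem outputs.
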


Since any non-zero $1\times n$ matrix is lower trapezoidal, we obtain the following immediate corollary.

\begin{corollary}
\label{1-relator_cor}
If $G$ is a right orderable group and $A_*$ is an algebraic $\Z G$-complex with $A_2 \isom \Z G$, then $A_*$ is admissible. In particular, if $X$ is a connected CW-complex with $G = \pi_1(X)$, then the following are equivalent:
\begin{enumerate}
\item $X$ is homotopy equivalent to a connected $2$-complex with a single $2$-cell.
\item There is an algebraic $\Z G$-complex $\Z G\to A_1\to A_0$ and a chain homotopy equivalence $A_*\to C_*(\widetilde{X})$ sending basis elements of $A_0$ to natural basis elements of $C_0(\widetilde{X})$.
\end{enumerate}
\end{corollary}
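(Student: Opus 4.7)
The plan is to leverage the key observation (stated just before the corollary) that a non-zero $1 \times n$ matrix is lower trapezoidal: after permuting columns so a non-zero entry sits in the rightmost position, the single right-hand diagonal entry is non-zero and nothing lies above it. With this in hand, the first assertion should follow directly from Theorem \ref{main_intro}: with $A_2 \cong \Z G$, the matrix of $d_2$ over the chosen bases is a single row, which (assuming $d_2 \neq 0$) can be permuted into lower trapezoidal form. I would handle the edge case $d_2 = 0$ separately: since $H_1(A_*) = 0$ and $\im d_2 = 0$, the map $d_1$ must be injective on $A_1$, so $A_1 \to A_0$ should be realisable as a $G$-tree and the $G$-orbit of 2-cells can then be attached trivially at a basepoint.

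For the implication (1)$\Rightarrow$(2), I would choose a 2-complex $Y$ homotopy equivalent to $X$ with a single 2-cell, and let $A_*$ be the cellular chain complex $C_*(\widetilde Y)$, which already vanishes in degree $\geq 3$. Simple-connectedness of $\widetilde Y$ makes $A_*$ algebraic, and a single 2-cell in $Y$ corresponds to a single $G$-orbit of 2-cells in $\widetilde Y$, giving $A_2 \cong \Z G$. A cellular, $G$-equivariant lift of the homotopy equivalence $Y \to X$ to universal covers $\widetilde Y \to \widetilde X$ should yield the required chain homotopy equivalence $A_* \to C_*(\widetilde X)$, with $0$-cells sent to $0$-cells by cellularity.

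For (2)$\Rightarrow$(1), I would combine the first assertion with Wall's Theorem \ref{wall_theorem}. By the first assertion $A_*$ is admissible; \cref{wall_theorem} then applies to the given chain homotopy equivalence $f\colon A_* \to C_*(\widetilde X)$, yielding a CW-complex $Y$, a cellular homotopy equivalence $g\colon Y \to X$, and a realisation $h\colon C_*(\widetilde Y) \to A_*$. Since $A_i = 0$ for $i \geq 3$, so is $C_i(\widetilde Y)$, making $Y$ a $2$-complex; and $C_2(\widetilde Y) \cong A_2 = \Z G$ should force $Y$ to have exactly one $2$-cell.

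The main technical obstacle I anticipate is the bookkeeping in the first paragraph—specifically, confirming that the formal definition of lower trapezoidal (\cref{trapezoidal_defn}) is stable under the column permutation used to push a non-zero entry to the rightmost position, and dispatching the $d_2 = 0$ case cleanly (for which one must build a $G$-tree realising $A_1\to A_0$). Once the admissibility claim is secured, both directions of the equivalence reduce to straightforward applications of \cref{wall_theorem} together with the identification $\rk_{\Z G} A_2 = 1$ with "a single 2-cell".
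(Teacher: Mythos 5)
Your proof is correct and matches the paper's intended route: the first assertion follows from \cref{main_intro} because any non-zero $1\times n$ row over $\Z G$ becomes lower trapezoidal after reordering columns to put the last non-zero entry in the rightmost position, and the equivalence (1)$\Leftrightarrow$(2) then follows by pairing that assertion with \cref{wall_theorem} exactly as you describe. Your separate treatment of the $d_2=0$ case is a genuine and useful refinement — the paper's one-line derivation only covers non-zero $d_2$, and indeed $d_2=0$ with $A_2 \cong \Z G$ can occur (it forces $\ker d_1 = 0$, so $0\to A_1\to A_0\to\Z\to 0$ is exact, $G$ is free, the $1$-skeleton is realised by a $G$-tree, and the orbit of $2$-cells is attached along constant loops).
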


\subsection{The relation lifting and relation gap problems}

Wall's conjecture is more commonly known as the \emph{relation lifting problem} and can be equivalently stated in terms of relation modules. Recall that if $F$ is a free group, $N\triangleleft F$ a normal subgroup, then the \emph{relation module} associated with the presentation $G = F/N$ is the left (or right) $\Z G$-module $N_{\ab} = N/[N, N]$. Note that $\Z F$ acts naturally on $N_{\ab}$ via conjugation by $F$ and since $N$ acts trivially, the action descends to an action of $\Z G$. If $A_*$ is an algebraic $\Z G$-complex, then it is a well known fact that $d_2(A_2)$ is isomorphic to a relation module $N_{\ab}$ (see \cref{submodule}) and hence any free basis for $A_2$ corresponds to a choice of generating set for $N_{\ab}$. We may now rephrase Wall's conjecture:

\begin{problem}[Relation lifting problem]
\label{relation_lifting_prob}
Let $F$ be a free group, $N\triangleleft F$ a normal subgroup and let $G = F/N$. If $r_1, \ldots, r_n\subset N_{\ab} = N/[N, N]$ is a set of $\Z G$-module generators for the relation module $N_{\ab}$, does there exist a collection of elements $w_1, \ldots, w_n\in N$ such that $w_i[N, N] = r_i$ for each $i = 1, \ldots, n$ and $\normal{w_1, \ldots, w_n} = N$?
\end{problem}

The first key step in the proof of our main results is a criterion for when a relation module generator can be lifted as in \cref{relation_lifting_prob}. We state below the criterion applied to the case in which the relation module is cyclic. This is a special case of \cref{lift_criterion}.

\begin{theorem}
\label{Weinbaum_converse}
Let $G = F/N$ be a group such that $\Z G$ is a domain and such that $N_{\ab} = (r)$ for some $r\in N_{\ab}$. Let $w\in N$ be a cyclically reduced element such that $r = w[N, N]$. Then $N = \normal{w}$ if and only if no proper non-empty subword of $w$ lies in $N$.
\end{theorem}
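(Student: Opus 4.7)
The plan handles the two directions separately. For the forward direction, assume $N = \normal{w}$, making $G = F/N$ a one-relator group. Since $\Z G$ is a domain, $G$ is torsion-free; if $w$ were a proper power $u^k$ with $k \geq 2$, then by standard one-relator group theory $\bar u \in G$ would have order exactly $k$, producing torsion and contradicting the domain hypothesis. Hence $w$ is not a proper power, and Weinbaum's classical theorem for one-relator groups with a cyclically reduced relator yields that no proper non-empty subword of $w$ lies in $N$.

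For the backward direction, set $M = \normal{w}$ and $K = N/M$; the goal is $K = 1$. First, the cyclic generation $N_{\ab} = \Z G \cdot r$ yields $N = M \cdot [N,N]$: any $u \in N$ is congruent modulo $[N,N]$ to a product of conjugates of $w^{\pm 1}$, so lies in $M \cdot [N,N]$. Consequently $K \cong [N,N]/([N,N] \cap M)$ is generated by images of commutators, so $K$ is perfect. Second, $w$ cannot be a proper power: if $w = u^k$ with $k \geq 2$ then each $u^j$ with $0 < j < k$ is a proper subword of $w$ not in $N$ by hypothesis, so $\bar u$ would have exact order $k$ in $G$, again contradicting the domain hypothesis. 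By Lyndon's Identity Theorem the presentation $2$-complex $Y_H$ of $\langle X \mid w\rangle$ is therefore aspherical, and its cover $Y_G$ corresponding to $K \leq H := F/M$ is aspherical as well, so $Y_G = K(K,1)$.

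The cellular chain complex of $Y_G$ takes the form $\Z G \xrightarrow{d_2} (\Z G)^X \to \Z G$, with $d_2(1)$ equal to the image in $\Z G$ of the Fox derivative of $w$. The subword hypothesis is equivalent to the partial products $v_0 = 1, v_1, \ldots, v_{n-1}$ of $w = y_1 \cdots y_n$ being pairwise distinct in $G$; together with $y_n \neq y_1^{-1}$ from cyclic reduction, this forces the $+1$ contributed by position $1$ to the $y_1$-component of $d_2(1)$ to be non-cancellable, so $d_2(1) \neq 0$. Since $\Z G$ is a domain, $d_2$ is then injective, giving $H_2(Y_G) = 0$; combined with $H_1(Y_G) = K_{\ab} = 0$ from perfectness, $Y_G$ is acyclic, so $K$ is an acyclic group.

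The main obstacle is the last step: deducing $K = 1$ from acyclicity of $K(K,1) = Y_G$. My plan is to exploit that $K$ is a subgroup of the locally indicable one-relator group $H$ (Brodskii's theorem, since $w$ is not a proper power), hence locally indicable itself. Moreover, the chain-level analysis above upgrades to $N_{\ab} \cong \Z G$ as a free cyclic module and a length-$2$ free $\Z G$-resolution of $\Z$; together with Lyndon's $M_{\ab} \cong \Z H$ this identifies the induced map $M_{\ab} \to N_{\ab}$ with the projection $\Z H \twoheadrightarrow \Z G$, whose kernel is the two-sided ideal generated by $\{k - 1 : k \in K\}$ and also corresponds to the algebraic object $(M \cap [N,N])/[M,M]$. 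Ruling out any non-trivial element of this kernel --- combining local indicability of $K$ with Newman-spelling-type combinatorics available for subgroups of one-relator groups --- is where the genuine difficulty lies and is the step I would expect to require the most care.
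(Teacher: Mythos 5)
Your forward direction is correct and agrees with the paper's route: it is the observation that $N = \normal{w}$ forces $w$ to not be a proper power (else $G$ has torsion, contradicting $\Z G$ being a domain), after which Weinbaum's theorem gives the subword conclusion.

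For the backward direction you have set up exactly the right objects, and several of your intermediate steps are sound and even attractive: writing $K = N/\normal{w}$, extracting that $K$ is perfect from cyclic generation of the relation module (this is the content of what the paper calls \cref{generating_lemma}), ruling out $w$ being a proper power from the subword hypothesis plus the domain condition, invoking Lyndon's Identity Theorem to make the cover $Y_G$ an aspherical $K(K,1)$, and the Fox-derivative computation showing that the subword hypothesis forces $d_2(1)\neq 0$ and hence $H_2(K)=0$ over the domain $\Z G$. The problem is the step you yourself flag at the end. Having an acyclic, locally indicable $K$ is genuinely not enough to conclude $K=1$: nontrivial acyclic groups exist, local indicability only controls finitely generated subgroups (a perfect $K$ must then be infinitely generated, but that is where the argument stops rather than where it finishes), and ``Newman-spelling-type combinatorics'' are a torsion phenomenon while $H=F/\normal{w}$ here is torsion-free. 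So the proposal as written does not close the gap, and does not indicate a concrete mechanism that would.

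The paper closes precisely this gap by an entirely different device, and it is the technical heart of the article. Rather than trying to upgrade acyclicity of $K$, the paper proves \cref{perfect_subgroup} (specialised to one-relator groups in \cref{perfect_subgroup_1-rel}), which asserts directly that a normal subgroup $K\trianglelefteq H$ of a one-relator group $H=F/\normal{w^k}$, intersecting the Magnus free factors trivially, with $\Z[H/K]$ a domain, with the subword non-triviality condition, and with $H_1(K,\Z)=0$, must equal $\normal{w}/\normal{w}=1$. The proof of \cref{perfect_subgroup} runs an induction along a Magnus-style hierarchy of one-relator products (\cref{hierarchy}, built from HNN-splittings coming from $\Z$-domains in \cref{HNN_one-relator} and \cref{HNN}). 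At each stage, $K$ acts on the Bass--Serre tree of an HNN-splitting $H \cong H'{*}_\psi$; the crucial input is \cref{homology_injection}, which uses the subword hypothesis to show that first homology of the edge-group intersections injects into $H_1(K)$, and \cref{tree_injection}, a Mayer--Vietoris argument over trees, which then forces the edge groups to be $R$-perfect, hence free and perfect, hence trivial. This propagates $H_1(K)=0$ down the hierarchy to a base case where $K$ is manifestly trivial. Your proposal does not contain this key inductive/hierarchical idea, which is exactly what the acyclicity route is missing, so the argument has a genuine gap at its final and most important step.
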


A theorem of Weinbaum's \cite{We72} states that if $F$ is a free group and $w\in F$ is a cyclically reduced element, then no proper non-empty subword of $w$ lies in $\normal{w}$. Rather, every proper non-empty subword of $w$ is non-trivial in the quotient one-relator group $G = F/\normal{w}$. Howie then generalised this result to one-relator products of locally indicable groups in \cite{Ho82}. \cref{Weinbaum_converse} can thus be considered as a converse to Weinbaum's theorem.

The second crux of the article involves showing that, under some extra technical hypotheses (which are always satisfied by right orderable groups), one can always find elements $w\in N$ that map to some relation module generator and such that no proper non-empty subword of $w$ lies in $N$. In the general setting, this leads to \cref{main_intro}. We leave the translation of \cref{main_intro} to a result about relation modules to the interested reader and instead only state the version for cyclic relation modules. The following is \cref{cyclic_module}.

\begin{corollary}
\label{1-relator_cor2}
If $G = F/N$ is right orderable and $N_{\ab} = (r)$ for some $r\in N_{\ab}$, then there is an element $w\in N$ such that
\[
w[N, N] = r, \quad \text{and} \quad \normal{w} = N.
\]
In particular, $G$ is a one-relator group.
\end{corollary}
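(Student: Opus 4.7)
The plan is to deduce this corollary from \cref{1-relator_cor} by constructing an algebraic $\Z G$-complex with $A_2 \isom \Z G$ such that $d_2(1)$ represents the given generator $r$. Start with the standard presentation $2$-complex of $G = F/N$: fix a free basis $X$ of $F$ and let $P$ be the $2$-complex built from one $0$-cell, one $1$-cell for each element of $X$, and one $2$-cell attached along each element of some normal generating set of $N$. The universal cover $\widetilde P$ is simply connected, so the cellular chain complex $C_*(\widetilde P)$ is an algebraic $\Z G$-complex, and there is a standard identification of $\ker(d_1^P) = \im(d_2^P)$ with the relation module $N_{\ab}$, realised as a submodule of $C_1(\widetilde P) \isom \bigoplus_{X} \Z G$.

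Using this, define a new algebraic $\Z G$-complex $A_*$ by taking $A_2 = \Z G$, $A_i = C_i(\widetilde P)$ for $i \in \{0, 1\}$, the same $d_1$ as in $C_*(\widetilde P)$, and $d_2^A(1)$ equal to the element of $C_1(\widetilde P)$ corresponding to $r$ under the embedding $N_{\ab} \hookrightarrow C_1(\widetilde P)$. Because $(r) = N_{\ab}$, one has $\im(d_2^A) = N_{\ab} = \ker(d_1^A)$, so $A_*$ is algebraic. Since $G$ is right orderable and $A_2 \isom \Z G$, \cref{1-relator_cor} applies: $A_*$ is admissible, so there exists a simply connected $G$-CW-complex $\widetilde Y$ together with a $\Z G$-isomorphism $C_*(\widetilde Y) \to A_*$ sending natural basis elements to the chosen ones. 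The quotient $Y = \widetilde Y / G$ is then a $2$-complex with a single $0$-cell, one $1$-cell for each $x \in X$, and a single $2$-cell; let $w$ denote the attaching word of this $2$-cell.

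It remains to verify the two required properties. Since $Y^{(1)}$ is a wedge of circles indexed by $X$, there is a canonical identification $\pi_1(Y^{(1)}) = F(X) = F$, via which $w$ is regarded as an element of $F$. Comparing $\pi_1(Y) = F/\normal{w}$ from the CW structure with $\pi_1(Y) = G = F/N$ from the deck group action (which is compatible with the algebraic condition $d_1(E_x) = (x-1)V$ for each $x \in X$) yields $\normal{w} = N$, and in particular $w \in N$. The image of $w$ in $N_{\ab}$ under the standard embedding $N_{\ab} \hookrightarrow C_1(\widetilde Y)$ equals the cellular boundary of the $2$-cell of $\widetilde Y$, which by construction is $d_2^A(1)$ representing $r$; hence $w[N, N] = r$. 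The one-relator conclusion is immediate from $G = F/\normal{w}$. The only real obstacle is bookkeeping: one must be sufficiently careful with the identifications between $A_*$, the realising complex $\widetilde Y$, and the ambient free group $F$ to ensure that $w$ genuinely lies in the prescribed $N \subseteq F$ and not merely in an isomorphic copy; all the substantive content is already encapsulated in \cref{1-relator_cor}.
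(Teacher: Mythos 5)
Your proposal is correct and does give a valid proof, but it is not the route the paper takes. The paper derives this corollary (stated there as \cref{cyclic_module}) from the general \cref{lifting_relations}, which in turn is proved via the \emph{weak} relative admissibility statement \cref{weak_relative_R-admissibility}: there, one only knows that the realising isomorphism sends a natural $2$-cell to $u_i^{-1}(C_i-\beta_i)$ for some unit $u_i\in R$ and correction term $\beta_i$, so an extra algebraic argument (the projection $p\circ d_2$ computation and the decomposition $N_R\isom d_2(C_2(\overline{X},R))\oplus RG$) is needed afterward to see that the lifted word hits the given generator $r$ on the nose, up to unit and conjugation. You instead go through \cref{1-relator_cor}, which rests on the \emph{non-weak} integral version \cref{relative_main}: since that theorem sends the natural $2$-cell basis element exactly to the chosen basis element $C_1$, the equality $w[N,N]=r$ falls out directly from the definitions once the identifications are in place, with no residual unit or conjugation correction required. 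Both paths pass through the same technical core (\cref{R-relative_main} and \cref{perfect_subgroup}), but yours is a shortcut that exploits the sharper $\Z$-specific admissibility theorem, at the cost of re-deriving the fact that the realising complex presents $G$ via $F$ with $\normal{w}=N$ (the bookkeeping you rightly flag). The paper's route is longer precisely because \cref{lifting_relations} is formulated over an arbitrary domain and allows nontrivial $W_A,W_B$, and \cref{cyclic_module} is extracted as a special case. One small caveat worth making explicit: when you identify $\pi_1(Y^{(1)})$ with $F(X)$, the choice of orientation for each $1$-cell $\tilde e_x$ is fixed by the requirement that $f_1$ sends $[\tilde e_x]$ to $E_x$ and that $d_1(E_x)=(x-1)V$; this pins down the sign so that the induced map $F\to\pi_1(Y)$ is literally the quotient map $F\to F/N$ rather than, say, $x\mapsto x^{-1}$, and this is the key to concluding $\normal{w}=N$ in $F$ rather than in some abstract isomorphic copy.
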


If $d_{\Z G}(N_{\ab})$ denotes the smallest cardinality of a $\Z G$-generating set for $N_{\ab}$ and $d_{F}(N)$ denotes the smallest cardinality of a normal generating set for $N$, then it is immediate that
\[
d_{\Z G}(N_{\ab})\leqslant d_F(N).
\]
By the Bestvina--Brady examples, it is possible that $d_{\Z G}(N_{\ab})<d_F(N) = \infty$. When we further assume that $d_F(N)<\infty$, whether the above can be taken to be equality is known as the \emph{relation gap problem} and remains open.

\begin{problem}[Relation gap problem]
Let $F$ be a free group, $N\triangleleft F$ a normal subgroup and let $G = F/N$. Is
\[
d_{\Z G}(N_{\ab}) = d_F(N)
\]
when $d_F(N)<\infty$?
\end{problem}

Potential counterexamples were proposed by Gruenberg--Linnell who proved in \cite{GL08} that the group $(\Z\times\Z/n\Z)*(\Z\times\Z/m\Z)$ has relation module generated by three elements (when $\gcd(m, n) = 1$), one fewer elements than the number of obvious normal generators for a presentation. Explicit generators of the relation module were provided by Mannan \cite{Ma16}. Further potential counterexamples were proposed by Bridson--Tweedale in \cite{BT07}, who also showed (based on work of Dyer) that if their groups were proven to be counterexamples to the relation gap problem, then they would also obtain counterexamples to Wall's $D(2)$ conjecture. Finally, we also mention that potential counterexamples to the relation gap problem and the $D(2)$ conjecture also arose in the work of Nicholson \cite[Section 8]{Ni21}.

\cref{1-relator_cor2} shows that there can be no counterexample amongst right orderable groups with cyclic relation module, answering \cite[OR5]{BMS02} for right orderable groups. The only previously known result in this direction is due to Harlander \cite{Ha93} who proved that a solvable group with cyclic relation module is a one-relator group.

Our results actually establish something slightly stronger. If $G = F/N$ and $R$ is a ring, the $R$-relation module is the $RG$-module $R\otimes_{\Z}N_{\ab}$. One can pose the relation lifting and relation gap problems also for the $R$-relation module in the same way. A result of Gruenberg \cite{Gr76} implies that for finite groups, it is easy to find examples with relation gap over a field (see also \cite{Ha18}). We give an explicit example of a group $G$ with a reducible presentation with cyclic relation module over $\Q$, but that is not a one-relator group in \cref{example}. Nevertheless, amongst right orderable groups we may still show there is no relation gap over an arbitrary domain.

\begin{corollary}
\label{1-relator_cor3}
If $G = F/N$ is right orderable, $R$ is a domain and the $R$-relation module $R\otimes_{\Z}N_{\ab}$ is generated by a single element $1\otimes r\in R\otimes_{\Z}N_{\ab}$ as an $RG$-module, then there is an element $w\in N$ and a positive integer $k\in \N$ such that
\[
1\otimes w^k[N, N] = 1\otimes r, \quad \text{and} \quad \normal{w} = N.
\]
In particular, $G$ is a one-relator group.
\end{corollary}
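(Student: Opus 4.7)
The plan is to mimic the proof of \cref{1-relator_cor2} at the level of $RG$-modules: apply \cref{R-relative_main} to produce a one-relator presentation of $G$ from the data of $r$, and then use the classification of units of $RG$ to extract the integer $k$.

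First I would pick any lift $w_0 \in N$ of $r$ and form the algebraic $RG$-complex
\[
A_* \colon RG \xrightarrow{d_2} RG^{X} \xrightarrow{d_1} RG,
\]
where $X$ is a free basis of $F$, $d_1(e_x) = x - 1$, and $d_2(1)$ is the image in $RG^X$ of the Fox derivative of $w_0$. Tensoring the short exact sequence $0 \to N_{\ab} \to \Z G^X \to \kernel(\Z G \to \Z) \to 0$ with $R$ (which preserves exactness since $N_{\ab}$ is $\Z$-free) identifies $\kernel d_1$ with $R \otimes_\Z N_{\ab}$ and $\im d_2$ with the $RG$-submodule generated by $1 \otimes r$. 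The hypothesis that $1 \otimes r$ generates the $R$-relation module is therefore exactly what makes $A_*$ algebraic, and $d_2$ is a nonzero row, hence lower trapezoidal. Applying \cref{R-relative_main} now yields a $2$-complex $Y$ with $\pi_1(Y) = G$ whose cellular chain complex satisfies $R \otimes_\Z C_*(\widetilde Y) \isom A_*$ as $RG$-chain complexes with matching bases. Comparing ranks of free $RG$-modules in degree $2$ forces $Y$ to have a single $2$-cell, so $G = F/\normal{w'}$ is a one-relator group. Right orderability rules out $w'$ being a proper power, so by Lyndon's identity theorem $N_{\ab}$ is $\Z G$-free of rank one, generated by $w'[N, N]$, and hence $R \otimes_\Z N_{\ab} \isom RG$ canonically via $1 \otimes w'[N, N] \leftrightarrow 1$.

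Under this identification, $1 \otimes r$ corresponds to an element $\beta \in RG$ that lies in the image of $\Z G \to RG$ (because $r \in N_{\ab}$) and is a unit (because $1 \otimes r$ generates the free cyclic $RG$-module). Since $G$ is right orderable and $R$ is a domain, every unit of $RG$ is a trivial unit $uh$ with $u \in R^\times$ and $h \in G$, and so $\beta = uh$ with $u$ the image of some nonzero integer $n \in \Z$. Replacing $w'$ by $w'^{-1}$ if necessary (which flips the sign of $n$), we may assume $n > 0$; setting $w = hw'h^{-1}$ and $k = n$ then gives $\normal{w} = N$ and, under the iso above, $1 \otimes w^k[N,N] \leftrightarrow kh = uh = \beta \leftrightarrow 1 \otimes r$, as required. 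The main obstacle is pinning down precisely what \cref{R-relative_main} delivers: we need it to produce a genuine $G$-CW-complex $Y$ so that Lyndon's theorem applies to its integral chain complex, rather than merely an $RG$-chain-homotopy-equivalent object. Once this is in hand, both the reduction to a one-relator presentation and the extraction of $k$ from a trivial unit of $RG$ are essentially routine.
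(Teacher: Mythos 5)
Your proposal is correct, and the overall strategy is the same as the paper's: build the algebraic $RG$-complex coming from the presentation, feed it into \cref{R-relative_main} to obtain a genuine $G$-$2$-complex with a single $2$-cell (the concern you raise at the end is addressed precisely by conclusion (5) of \cref{R-relative_main} with $X=\emptyset$), conclude $N=\normal{w'}$, and then use the description of units in $RG$ for right orderable $G$ to extract the integer $k$. The one place where you diverge from the paper is in proving that $N_R$ is free cyclic over $RG$ with basis $1\otimes w'[N,N]$: you invoke Lyndon's identity theorem (via torsion-freeness, so $w'$ is not a proper power, so $N_{\ab}\isom\Z G$, then tensor with $R$), whereas the paper's proof of \cref{lifting_relations} avoids any appeal to Lyndon and instead observes directly that the map $RG\to N_R$, $\alpha\mapsto\alpha\cdot(1\otimes w'[N,N])$, is injective because $RG$ is a domain and the image of $1\otimes w'[N,N]$ has a non-zero coordinate in a free module over $RG$. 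The paper's route is lighter and stays entirely over $R$; your route is heavier but equally valid. Both then finish the same way, writing the unit $\beta$ as a trivial unit $uh$ lying in the image of $\Z G$, and flipping $w'$ to $w'^{-1}$ if needed to make the exponent positive.
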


\subsection*{Acknowledgements}

We would like to thank John Nicholson for helpful conversations on the $D(2)$ conjecture and comments on this article. This work has received funding from the European Research Council (ERC) under the European Union's Horizon 2020 research and innovation programme (Grant agreement No. 850930) and from the Spanish Ministry of Science, Innovation, and Universities, through the Severo Ochoa Programme for Centres of Excellence in R\&D (CEX2023-001347-S)

\section{Preliminaries}

\subsection{Group rings and engulfing elements}

Rings will always be assumed to have unit $1\neq 0$ and modules will always be assumed to be left modules. In particular, this means that in this article, conjugation will be considered as a left action. The group of units of a ring $R$ will be denoted by $R^{\times}$. The \emph{annihilator} of a subset $S\subset M$ of a (left) $R$-module $M$ is denoted by $\Ann_R(S) = \{r\in R \mid r\cdot s= 0, \forall s\in S\}$.

Let $R$ be a ring and let $G$ be a group. Denote by $RG$, or $R[G]$, the group ring of $G$ with coefficients in $R$. Each element $r\in RG$ is a formal sum
\[
r = \sum_{g\in G} r_g g
\]
where $r_g\in R$ and for all but finitely many elements $g\in G$, we have $r_g = 0$. The support of an element $r\in RG$ is defined as the set
\[
\supp(r) = \{g\in G \mid r_g \neq 0\}\subset G.
\]
Note that $\supp(rs)\subset \supp(r)\supp(s)$ for all $r, s\in RG$.

Now let $F$ be a free left (right) $RG$-module with a given free basis $B\subset F$. For each $b\in B$, denote by $p_b\colon F \to RG$ the corresponding projection map. We extend the definition of support above to free $RG$-modules by defining for all $m\in F$:
\[
\supp_B(m) = \bigcup_{b\in B}\supp(p_{b}(m))\cdot b\subset G\cdot B.
\]

We introduce the following property as it is precisely what we need to prove our main results in \cref{sec:admissibility}.

\begin{definition}
If $F$ is a free left (right) $RG$-module with a free basis $B\subset F$, say a non-zero element $m\in F$ is \emph{left (right) engulfing (with respect to $B$)} if there is an $r\in RG - R$ such that $\supp_B(r\cdot m)\subset \supp_B(m)$, say it is \emph{not left (right) engulfing} otherwise.
\end{definition}

When $F = RG$, left (right) engulfing elements are always assumed to be with respect to the basis $B = \{1\}$.

\begin{remark}
\label{domain_remark}
If the group ring $RG$ does not contain any left (or right) engulfing elements, then it is immediate that $RG$ is a domain. It is also not hard to see that under the same hypothesis, the group of units of $RG$ is $(RG)^{\times} = \{r\cdot g \mid r\in R^{\times}, g\in G\}$.
\end{remark}

\subsection{Group rings without engulfing elements}

A group $G$ is said to have \emph{$k$-unique products} if for any non-empty finite subsets $A, B\subset G$ with $|AB|\geqslant k$, there are elements $a_1b_1, \ldots, a_kb_k\in AB$ uniquely represented as products in $AB$. It is known that 1-unique products, known as \emph{unique products}, implies $2$-unique products by work of Strojnowski \cite{St80}.

Say $G$ has \emph{left $k$-unique products} if for any pair of finite subsets $A, B\subset G$ with $|A|\geqslant k$ and $B$ non-empty, there exist elements $a_1b_1, \ldots, a_kb_k\in AB$ uniquely represented as products in $AB$ with $a_i\neq a_j$ for all $i\neq j$. Define right $k$-unique products similarly. Our motivation for introducing this property is the following lemma.

\begin{lemma}
\label{2-unique_prod}
If $G$ is a left $2$-unique product group and $R$ is a domain, then $RG$ does not contain any left engulfing elements. In particular, every free left $RG$-module does not contain any left engulfing elements with respect to any basis.
\end{lemma}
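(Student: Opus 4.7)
The plan is to prove the main assertion ($RG$ has no left engulfing elements) first, then deduce the free module version.

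First, I would establish that $G$ is torsion-free: if $g \in G$ had finite order $n \geq 2$, the pair of sets $\{1, g\}$ and $\{1, g, \ldots, g^{n-1}\}$ would satisfy $\{1,g\}\cdot\{1, g, \ldots, g^{n-1}\} = \{1, g, \ldots, g^{n-1}\}$ with each $g^i$ admitting two representations $g^i = 1 \cdot g^i = g \cdot g^{i-1}$, so no unique product exists in this product set, contradicting left $2$-unique products.

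Now assume for contradiction there is a left engulfing element $m \in RG \setminus \{0\}$: some $r \in RG - R$ satisfies $\supp(rm) \subset \supp(m)$. Let $A = \supp(r)$ and $S = \supp(m)$. If $|A| = 1$, write $r = r_0 g$ with $g \neq 1$; then $\supp(rm) = gS$ by the domain hypothesis on $R$, so engulfing forces $gS \subset S$ and hence $gS = S$ (equal cardinalities). Iterating gives $g^n S = S$ for all $n \geq 0$, so for any $s \in S$ the set $\{g^n s\}_{n \geq 0}$ is infinite (by torsion-freeness of $g$) and lies in the finite set $S$, contradiction. If $|A| \geq 2$, apply left $2$-unique products to the pair $(A, S)$: this yields $a_1 \neq a_2 \in A$ and $s_1, s_2 \in S$ so that both $a_1 s_1$ and $a_2 s_2$ are uniquely represented in $AS$. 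Since $R$ is a domain, the coefficient of $a_i s_i$ in $rm$ is $r_{a_i} m_{s_i} \neq 0$, whence $a_i s_i \in \supp(rm)$; engulfing then forces $a_1 s_1, a_2 s_2 \in S$.

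The key remaining step --- and the one I expect to be the main obstacle --- is to leverage this configuration to produce a unique product in $AS$ lying \emph{outside} $S$: such a product would have non-zero coefficient in $rm$ and immediately contradict engulfing. The idea is that uniqueness of $a_1 s_1$ in $AS$ forces $a_2^{-1} a_1 s_1 \notin S$ (otherwise $a_2 \cdot (a_2^{-1} a_1 s_1) = a_1 s_1$ would give a second representation in $A \times S$), and symmetrically $a_1^{-1} a_2 s_2 \notin S$. Together with torsion-freeness of $G$ and a second careful application of left $2$-unique products to a configuration built from these forbidden shifts, one should be able to exhibit the required unique product in $AS \setminus S$, completing the argument in the $RG$ case.

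Finally, for the free module version, decompose a putative left engulfing $m \in F$ as $m = \sum_{b \in B} m_b \cdot b$ in the given basis $B$. Since $G \cdot b \cap G \cdot b' = \emptyset$ for distinct $b, b' \in B$ by freeness, the inclusion $\supp_B(rm) \subset \supp_B(m)$ intersected with each slot $G \cdot b$ decouples into $\supp(rm_b) \subset \supp(m_b)$ for every $b$; any $b$ with $m_b \neq 0$ then yields a left engulfing element of $RG$, contradicting the case already proved.
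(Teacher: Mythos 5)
Your proposal is incomplete: the case $|A|\geq 2$ --- which is the essential case --- is not actually closed. You correctly extract $a_1\neq a_2$ and $s_1,s_2$ with $a_1 s_1$ and $a_2 s_2$ uniquely represented in $AS$, and you correctly deduce both must lie in $S$, but then you only sketch a hope that some further configuration produces a unique product of $AS$ outside $S$. That step is exactly where the difficulty lies, and the observations you list ($a_2^{-1}a_1 s_1\notin S$, etc.) do not by themselves yield it; no amount of torsion-freeness obviously bridges the gap.

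The missing idea, used in the paper, is a simple normalization \emph{before} invoking left $2$-unique products: if $1\notin\supp(r)$, replace $r$ by $1+r$. Since $\supp((1+r)m)\subset\supp(m)\cup\supp(rm)\subset\supp(m)$, engulfing is preserved, and now $1\in A=\supp(r)$ with $|A|\geq 2$ still. Once $1\in A$, the configuration you already built closes instantly: $a_1 s_1\in S$ means $1\cdot(a_1 s_1)$ is a second expression of $a_1 s_1$ as a product in $A\times S$, so uniqueness forces $a_1=1$; likewise $a_2=1$; this contradicts $a_1\neq a_2$. Hence $\supp(r)=\{1\}$, i.e.\ $r\in R$, and there is no left engulfing element. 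With this trick your separate torsion-freeness lemma and the $|A|=1$ subcase become unnecessary (they are subsumed), though they are not wrong. Your reduction of the free-module statement to the $RG$ case via the coordinate projections $p_b$ is correct and matches the intent of the paper's ``in particular.''
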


\begin{proof}
Let $a, b\in RG$ be non-zero elements. Suppose that $\supp(ab)\subset \supp(b)$. If $1\notin \supp(a)$, then $\supp((1+a)b)\subset \supp(b)$ and so, by possibly replacing $a$ with $1+a$, we may assume that $1\in \supp(a)$. Since $G$ has left $2$-unique products, if $\supp(a)\neq \{1\}$, then $|\supp(a)|\geqslant 2$ and so there are two uniquely represented elements $a_1b_1, a_2b_2\in \supp(a)\supp(b)$ with $a_1\neq a_2$. Since $R$ is a domain, $r_{a_1}r_{b_1}\neq 0$ and $r_{a_2}r_{b_2}\neq 0$ and so $a_1b_1, a_2b_2\in \supp(ab)\subset\supp(b)$. Since $1\in \supp(a)$ and $a_1b_1, a_2b_2\in \supp(b)$, this implies that $a_1 = a_2 = 1$, a contradiction. Hence $\supp(a) = \{1\}$ and so $a\in R$. Since $a$ and $b$ were arbitrary, $RG$ does not contain any left engulfing elements.
\end{proof}

A group $G$ is \emph{right (left) orderable} if there is a total order on $G$ such that for any $a, b\in G$ with $a<b$, we have $ag<bg$ ($ga<gb$) for all $g\in G$. The following is \cite[Lemma 13.1.7]{Pa77} combined with the fact that right orderable groups are also left orderable.

\begin{lemma}
\label{ro}
If $G$ is right orderable, then $G$ has left and right $2$-unique products. Hence, if $R$ is a domain, then $RG$ does not contain any left (or right) engulfing elements and nor does any free left (or right) $RG$-module with respect to any basis.
\end{lemma}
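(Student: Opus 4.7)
The plan is to deduce the $2$-unique product properties (on both sides) directly from the invariant order, after which the conclusions about engulfing elements are immediate from \cref{2-unique_prod} together with its right-handed analogue, whose proof is verbatim the same as the displayed one after exchanging left and right throughout.

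For left $2$-unique products I would work directly with the given right-invariant order $<$. Fix finite $A, B \subset G$ with $|A| \geq 2$ and $B \neq \emptyset$, set $\alpha^+ = \max_< A$ and $\alpha^- = \min_< A$, and choose $b^+, b^- \in B$ realising $\max_<(\alpha^+ B) = \alpha^+ b^+$ and $\min_<(\alpha^- B) = \alpha^- b^-$. If $\alpha^+ b^+ = a'b'$ with $a' \in A$, $b' \in B$, and $a' \neq \alpha^+$, then $a' < \alpha^+$ and right-invariance yields $a' b' < \alpha^+ b' \leq \alpha^+ b^+$, a contradiction, while the case $a' = \alpha^+$ forces $b' = b^+$. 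Hence $\alpha^+ b^+$ is uniquely represented in $AB$; the symmetric argument (using $a' > \alpha^-$ and $\alpha^- b' \geq \alpha^- b^-$) handles $\alpha^- b^-$. The two products are distinct via the chain $\alpha^+ b^+ \geq \alpha^+ b^- > \alpha^- b^-$, and their $A$-parts differ since $|A| \geq 2$, as required.

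For right $2$-unique products I would first verify the parenthetical claim: if $<$ is right-invariant then $a \prec b \iff b^{-1} < a^{-1}$ defines a left-invariant total order on $G$ (a routine check, since right-multiplying $b^{-1} < a^{-1}$ by $g^{-1}$ gives $(gb)^{-1} < (ga)^{-1}$). A verbatim rerun of the previous paragraph with the roles of $A$ and $B$ swapped and $\prec$ in place of $<$ then produces two uniquely represented products in $AB$ with distinct $B$-parts whenever $|B| \geq 2$ and $A$ is non-empty. The only subtlety in the whole argument is the order of quantifiers in the extremisation: with only a one-sided invariance we cannot optimise jointly over both factors, so one must fix a coordinate first, invoke invariance, and only then optimise over the other. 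This is precisely why the extremisation proceeds by choosing $\alpha^\pm$ first and $b^\pm$ afterwards, rather than the more symmetric max/min-of-$AB$ description that would be available in a biorderable group.
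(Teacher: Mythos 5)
Your proof is correct. The paper itself gives no argument: it delegates the $2$-unique-product property entirely to a citation of \cite[Lemma~13.1.7]{Pa77} together with the remark that right orderability implies left orderability (precisely your inversion trick $a \prec b \iff b^{-1} < a^{-1}$). Your write-up supplies a self-contained proof of that cited fact, and the underlying mechanism is the standard one: extremise over the factor with $\geqslant 2$ elements first, then use the defining extremality (not invariance) to compare within the resulting coset, and only then invoke the one-sided invariance to compare across cosets. You have correctly identified and navigated the one real subtlety — that with right-invariance alone one cannot optimise jointly over $A\times B$, so the inequality $\alpha^+ b' \leqslant \alpha^+ b^+$ must come from the definition of $b^+$ rather than from translation-invariance — and the chain $\alpha^+ b^+ \geqslant \alpha^+ b^- > \alpha^- b^-$ correctly certifies both distinctness of the two products and distinctness of their $A$-parts. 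The reduction of the engulfing conclusion to \cref{2-unique_prod} and its left/right mirror is also fine: in the mirrored proof one takes $A = \supp(b)$, $B = \supp(a)$, the right $2$-unique-products hypothesis supplies $b_1 \neq b_2$, and the containment $\supp(ba)\subset\supp(b)$ forces $b_1=b_2=1$, the same contradiction. In short, you have unpacked the citation into a complete elementary argument rather than taken a different route.
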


We do not know whether there exists a group with unique products, but without left $2$-unique products or a group with left $2$-unique products, but that is not right orderable. We also do not know whether there is a relation between the unique product property and the absence of left (or right) engulfing elements.

\subsection{Relation modules}

If $G$ is a group, denote by $G_{\ab} = G/[G, G]$ the abelianisation of $G$. If $F$ is a free group and $N\triangleleft F$ is a normal subgroup, we say $F/N$ is a \emph{presentation} for $G = F/N$. The \emph{relation module} for the presentation is the left $\Z G$-module $N_{\ab}$ where the $G$ action is induced by the conjugation by $F$. If $R$ is a ring, the \emph{$R$-relation module} for the presentation is the left $RG$-module 
\[
N_R = R\otimes_{\Z}N_{\ab}.
\]
Recall that $G = F/N$ has \emph{type $\fp_2(R)$} if $N_R$ is finitely generated as an $RG$-module. The following appears as \cite[Lemma 2.1]{BS78}.

\begin{lemma}
\label{generating_lemma}
Let $F$ be a free group, $N, P\triangleleft F$ normal subgroups with $N\leqslant P$ and let $R$ be a ring. If $W\subset N$ is a normal generating set for $N$, then the following are equivalent:
\begin{enumerate}
\item $H_1(P/N, R) = 0$.
\item $W$ is a generating set for $P_R$ as a left $RH$-module, where $H = F/P$.
\end{enumerate}
\end{lemma}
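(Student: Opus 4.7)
The plan is to apply the five-term exact sequence in integral homology to the extension $1\to N\to P\to P/N\to 1$ and then tensor with $R$. The tail of this five-term exact sequence gives the exactness of
\[
N/[N,P] \to P_{\ab} \to (P/N)_{\ab} \to 0,
\]
where the first map is induced by the inclusion $N\hookrightarrow P$. Since $R\otimes_{\Z}(-)$ is right exact, tensoring yields the exact sequence
\[
R\otimes_{\Z}(N/[N,P]) \xrightarrow{\varphi} P_R \to H_1(P/N, R)\to 0,
\]
using the identification $H_1(P/N,R) = R\otimes_{\Z}(P/N)_{\ab}$.

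Next I would verify that $N/[N,P]$ carries a natural $\Z H$-module structure making $\varphi$ into an $RH$-module homomorphism. The conjugation action of $F$ on $N$ preserves $[N,P]$ because $N$ and $P$ are both normal in $F$, and $P$ acts trivially on the quotient since $pnp^{-1}n^{-1}=[p,n]\in [N,P]$ for all $p\in P$ and $n\in N$; this descends to an action of $H = F/P$. Furthermore, the hypothesis that $W$ normally generates $N$ in $F$ says that every $n\in N$ is a product of $F$-conjugates of elements of $W^{\pm 1}$, which in the abelian group $N/[N,P]$ becomes a $\Z H$-linear combination of the images of $W$. Hence the image of $W$ generates $R\otimes_{\Z}(N/[N,P])$ as an $RH$-module.

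With both ingredients in place the equivalence is immediate. If $W$ generates $P_R$ as an $RH$-module, then the image of $\varphi$ (which is an $RH$-submodule containing the images of $W$) must be all of $P_R$, so by exactness $H_1(P/N,R) = 0$. Conversely, if $H_1(P/N,R) = 0$, then $\varphi$ is surjective, and since the images of $W$ generate the source, they also generate $P_R$. No step looks like a genuine obstacle; the only care required is in tracking the $\Z H$-module structure on $N/[N,P]$, which is routine.
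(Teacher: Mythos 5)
Your proof is correct: the five-term exact sequence from the Lyndon--Hochschild--Serre spectral sequence for $1\to N\to P\to P/N\to 1$, together with right exactness of $R\otimes_{\Z}(-)$ and the observation that $F$-conjugation makes the resulting sequence one of $RH$-modules, does establish the equivalence, and each step (the identification $H_0(P/N,N_{\ab})\cong N/[N,P]$, the $RH$-linearity of $\varphi$, the fact that the images of $W$ generate the source) is verified accurately. The paper does not give its own proof but cites \cite[Lemma~2.1]{BS78} for this fact; your argument is the standard one that such a reference would contain.
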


If $S\subset F$ is a free basis for $F$, the relation module for $G = F/N$ is best seen as a submodule of $\bigoplus_{s\in S}RG$. In order to describe this submodule, we will need to introduce \emph{Fox derivatives}. Let $F$ be a free group with free basis $S$. The Fox derivatives of $F$ with respect to $S$ are $\Z F$-linear maps
\[
\frac{\partial}{\partial s}\colon \Z F \to \Z F \quad \text{ for $s\in S$}.
\]
satifying $\frac{\partial}{\partial s}(s) = 1$ for all $s\in S$, $\frac{\partial}{\partial s}(t) = 0$ for all $s\neq t\in S$ and
\[
\frac{\partial}{\partial s}(vw) = \frac{\partial}{\partial s}(v) + v\cdot \frac{\partial}{\partial s}(w)
\]
for all $v, w\in F$. These axioms uniquely determine the Fox derivatives. One can derive from the axioms the fact that
\[
\frac{\partial}{\partial s}\left(s^{-1}\right) = -s^{-1}.
\]
We may extend the the Fox derivatives to $RF$ by applying $R\otimes_{\Z}-$.

See \cite[Proposition 5.4]{Br82} for the proof of the following.

\begin{lemma}
\label{relation_module_submodule}
Let $R$ be a ring, let $F$ be a free group, $S\subset F$ a free basis for $F$, let $N\triangleleft F$ and let $W\subset N$ be a set of elements whose images in $N_R$ generate $N_R$ as an $RG$-module. Denoting by $\phi\colon F \to F/N = G$, we have the following exact sequence of $RG$-modules
\[
\begin{tikzcd}
\bigoplus_{w\in W}RG \arrow[r, "d_2"]& \bigoplus_{s\in S}RG \arrow[r, "d_1"] & RG \arrow[r, "d_0"] & R \arrow[r] & 0
\end{tikzcd}
\]
where
\begin{align*}
d_2(1_w) &= \sum_{s\in S}\phi\left(\frac{\partial}{\partial s}(w)\right)\cdot 1_s \quad \text{for $w\in W$}\\
d_1(1_s) &= s - 1 \quad \text{ for $s\in S$}\\
d_0(1) &= 1.
\end{align*}
Furthermore, $\Ima(d_2) \isom N_R$.
\end{lemma}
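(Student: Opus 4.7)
The plan is to splice together three exact sequences: the augmentation sequence $RG \to R \to 0$, a Crowell-type sequence identifying $\ker(d_1)$ with $N_R$, and the surjection $\bigoplus_W RG \twoheadrightarrow N_R$ coming from the hypothesis on $W$.

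First I would verify Fox's \emph{fundamental formula}
\[
\alpha - \epsilon(\alpha) \;=\; \sum_{s \in S} \frac{\partial \alpha}{\partial s}\,(s - 1), \qquad \alpha \in RF,
\]
where $\epsilon \colon RF \to R$ denotes the augmentation. It suffices to check this on elements of $F$, where it follows by induction on word length using the Leibniz axiom. A direct consequence is that $IF$ is a free left $RF$-module with basis $\{s - 1\}_{s \in S}$; the inverse of $1_s \mapsto s - 1$ is $\alpha - \epsilon(\alpha) \mapsto \sum_s \frac{\partial \alpha}{\partial s}\cdot 1_s$.

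Next I would apply $RG \otimes_{RF}(-)$, viewing $RG$ as a right $RF$-module via the induced ring homomorphism $RF \to RG$, to the short exact sequence $0 \to IF \to RF \to R \to 0$ of left $RF$-modules. Under the free basis from the previous step the resulting right-exact sequence is precisely $\bigoplus_S RG \xrightarrow{d_1} RG \to R \to 0$ as in the statement, and the long exact sequence of $\mathrm{Tor}^{RF}_*$ identifies $\ker(d_1)$ with $\mathrm{Tor}^{RF}_1(RG, R)$. To compute this Tor group I would use the alternative short exact sequence $0 \to J \to RF \to RG \to 0$, where $J = \ker(RF \to RG)$; since $RF$ is free over itself, the long exact sequence collapses to an isomorphism $\mathrm{Tor}^{RF}_1(RG, R) \cong J \otimes_{RF} R = J/(J \cdot IF)$. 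The assignment $n - 1 \mapsto 1 \otimes n[N, N]$ for $n \in N$, extended $R$-linearly, then defines an $RG$-linear isomorphism $J/(J \cdot IF) \cong N_R$.

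Chasing the diagram and using the fundamental formula applied to $n \in N$ (so that the image of $n - 1$ in $RG$ vanishes) shows that the embedding $N_R \hookrightarrow \bigoplus_S RG$ sends the class of $n$ to $\sum_s \phi\bigl(\tfrac{\partial n}{\partial s}\bigr) \cdot 1_s$. Since $W$ generates $N_R$ as an $RG$-module by hypothesis, the surjection $\bigoplus_W RG \twoheadrightarrow N_R$ given by $1_w \mapsto 1 \otimes w[N, N]$, composed with this embedding, is precisely the map $d_2$ of the statement. Splicing the three sequences produces the asserted exact sequence and yields $\Ima(d_2) \cong N_R$.

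The main obstacle is the isomorphism $J/(J \cdot IF) \cong N_R$. The map on generators is motivated by the identity $(n_1 - 1)(n_2 - 1) \in J \cdot IF$, which makes $n \mapsto n - 1$ additive modulo $J \cdot IF$, but some bookkeeping is required to verify that the map factors through the abelianisation of $N$ and that the left multiplication action of $RG$ on $J/(J \cdot IF)$ descends to the conjugation action on $N_R$.
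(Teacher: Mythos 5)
Your proof is correct in outline and, once the bookkeeping you flag is carried out, gives a complete argument, but it takes a genuinely different route from the cited source. The paper simply refers to Brown (Proposition 5.4 of \cite{Br82}), where the partial free resolution is obtained topologically: one realizes the complex as the cellular chain complex of the universal cover of a presentation $2$-complex and verifies the Fox-derivative formula for $\partial_2$ by a direct chain-level computation; the identification $\ker d_1\cong N_R$ is read off as $H_1$ of the Cayley graph. Your argument is instead purely homological: it identifies $\ker d_1$ with $\mathrm{Tor}^{RF}_1(RG,R)$ using the freeness of $IF$ over $RF$, then recomputes this Tor group from the alternative resolution $0\to J\to RF\to RG\to 0$, reducing everything to the Gruenberg-type isomorphism $J/(J\cdot IF)\cong N_R$. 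The algebraic route has two advantages here: it works over an arbitrary coefficient ring $R$ without a UCT step to pass from $\Z$ to $R$, and it handles cleanly the fact that $W$ is only required to generate $N_R$ and need not normally generate $N$, a situation in which attaching $2$-cells along $W$ would not directly yield the simply connected cover that the topological argument wants. The price is exactly the step you identify: establishing that $n-1\mapsto 1\otimes n[N,N]$ induces a well-defined $RG$-isomorphism $J/(J\cdot IF)\to N_R$. This requires checking that the assignment is additive modulo $J\cdot IF$ (using $(n_1-1)(n_2-1)\in J\cdot IF$), that it kills $[N,N]$, that it intertwines the left $RF$-action with conjugation (using $f(n-1)\equiv (fnf^{-1}-1)\bmod J\cdot IF$), and that it is bijective — the last point is most transparently seen by writing $J=I(RN)\cdot RF\cong I(RN)\otimes_{RN}RF$ as right $RF$-modules, whence $J\otimes_{RF}R\cong I(RN)/I(RN)^2\cong N_R$. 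One small omission worth making explicit: in the long exact Tor sequence for $0\to J\to RF\to RG\to 0$ you need the induced map $J\otimes_{RF}R\to RF\otimes_{RF}R$ to vanish, which holds because $J\subset IF$ so the composite $J\to RF\to R$ is already zero.
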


The $|W|$-by-$|S|$ matrix
\[
J = \left(\phi\left(\frac{\partial}{\partial s}(w)\right)\right)_{w\in W, s\in S}
\]
from \cref{relation_module_submodule} is called the \emph{Jacobi matrix} of the presentation.

\subsection{Cellular chain complexes and relation modules}

If $X$ is a CW-complex, recall that we denote by $C_*(X)$ the associated cellular chain complex with boundary maps $\partial_n\colon C_n(X)\to C_{n-1}(X)$. If $R$ is a ring, denote by $C_*(X, R) = R\otimes_{\Z}C_*(X)$ the chain complex with boundary maps $1\otimes\partial_n\colon C_n(X, R)\to C_{n-1}(X, R)$.  If $c\subset X$ is an $n$-cell, we denote by $[c]\in C_n(X)$ (and $1\otimes[c]\in C_n(X, R)$) the corresponding $n$-chain. The $n$-cycles and $n$-boundaries are the submodules $\ker(1\otimes\partial_n) = Z_n(X, R)\leqslant C_n(X, R)$ and $\Ima(1\otimes\partial_{n+1}) = B_n(X, R)\leqslant C_n(X, R)$ and $H_n(X, R) = Z_n(X, R)/B_n(X, R)$. Here $C_n(X, R)$ is considered as a (left) $R$-module.

Now let $G$ be a group and suppose that $X$ is a $G$-CW-complex. That is, $G$ acts on $X$ by homeomorphisms, preserving the cell structure. We also require that any element of $G$ that fixes a cell setwise also fixes it pointwise. This action naturally turns each $C_n(X, R)$ into a left $RG$-module and each map $1\otimes\partial_n$ into an $RG$-module map. By choosing $G$-orbit representatives of $n$-cells $c\subset X$, we obtain a natural free $RG$-basis for $C_n(X, R)$ consisting of the elements $1\otimes[c]\in C_n(X, R)$. A free $RG$-basis for $C_n(X, R)$ is \emph{natural} if each basis element is of the form $1\otimes[c]$ for some $n$-cell $c\subset X$.

Since our results only involve $2$-dimensional CW-complexes, we shall usually abbreviate $2$-dimensional CW-complex to \emph{$2$-complex} and a $2$-dimensional $G$-CW-complex to \emph{$G$-$2$-complex}.

Let $X$ be a connected $2$-complex, let $T\subset X^{(1)}$ be a spanning tree and denote by $Z$ the CW-complex obtained from $X$ by contracting $T$ to a single 0-cell. This deformation retraction induces a commutative diagram
\[
\begin{tikzcd}
{C_2(\widetilde{Z}, R)} \arrow[r] \arrow[d, equal] & {C_1(\widetilde{Z}, R)} \arrow[r] \arrow[d, hook, bend right]      & {C_0(\widetilde{Z}, R)} \arrow[r] \arrow[d, hook, bend right]      & R \arrow[r] \arrow[d, equal] & 0 \\
{C_2(\widetilde{X}, R)} \arrow[r]           & {C_1(\widetilde{X}, R)} \arrow[r] \arrow[u, two heads, bend right] & {C_0(\widetilde{X}, R)} \arrow[r] \arrow[u, two heads, bend right] & R \arrow[r]           & 0
\end{tikzcd}
\]
where the vertical map $C_2(\widetilde{Z}, R)\to C_2(\widetilde{X}, R)$ is an isomorphism. Since $Z$ has a single $0$-cell, it is a presentation complex for $\pi_1(X)$ with generators corresponding to each 1-cell and relators corresponding to the attaching maps of each $2$-cell. In particular, given choices of natural bases for $C_i(\widetilde{Z}, R)$, the boundary map $C_2(\widetilde{Z}, R) \to C_1(\widetilde{Z}, R)$ can be given in terms of a Jacobi matrix of this presentation.

These observations imply the following lemma.

\begin{lemma}
\label{submodule}
Let $R$ be a ring and let $X$ be a $2$-complex. Then $\Ima(1\otimes\partial_2) \isom N_R$, where $N = \ker\left(\pi_1\left(X^{(1)}\right)\to \pi_1(X)\right)$ and $N_R$ is the $R$-relation module for the presentation $\pi_1(X) \isom \pi_1\left(X^{(1)}\right)/N$.
\end{lemma}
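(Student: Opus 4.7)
The plan is to reduce to the case where $X$ has a single $0$-cell using the spanning tree construction already set up in the paragraph preceding the lemma, and then feed the result into \cref{relation_module_submodule}. First, I contract a spanning tree $T\subset X^{(1)}$ to a point to obtain a $2$-complex $Z$ with a single $0$-cell and a homotopy equivalence $X\to Z$. The commutative diagram displayed just before the statement shows that the vertical map $C_2(\widetilde{Z},R)\to C_2(\widetilde{X},R)$ is an isomorphism and the one on $C_1$ is an $RG$-injection, so $\Ima(1\otimes\partial_2^{X})$ and $\Ima(1\otimes\partial_2^{Z})$ are isomorphic as $RG$-modules. In parallel, collapsing $T$ induces an isomorphism $\pi_1(X^{(1)})\to\pi_1(Z^{(1)})$ carrying $N$ to $N'=\ker(\pi_1(Z^{(1)})\to\pi_1(Z))$, so the two presentations define the same $R$-relation module.

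With this reduction, I may assume $X=Z$ has a single $0$-cell. Then $F=\pi_1(X^{(1)})$ is free on a basis $S$ in bijection with the $1$-cells, and picking one $2$-cell per $G$-orbit and reading off its attaching map in $F$ produces a set $W\subset N$ that normally generates $N$ (the attaching circles of the $2$-cells of any presentation complex do). Applying \cref{generating_lemma} with $P=N$, so that $P/N$ is trivial and hence $H_1(P/N,R)=0$, the images of $W$ in $N_R$ generate $N_R$ as an $RG$-module, so the hypotheses of \cref{relation_module_submodule} are satisfied.

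Finally, I choose natural $RG$-bases for $C_2(\widetilde{X},R)$ and $C_1(\widetilde{X},R)$ from the $G$-orbit representatives indexed by $W$ and $S$. A standard Fox calculus computation, lifting the attaching map of each $2$-cell to $\widetilde{X}$ and reading off the coefficient of each $1$-cell lift in its cellular boundary, identifies the matrix of $1\otimes\partial_2$ in these bases with the Jacobi matrix $J=\bigl(\phi(\partial w/\partial s)\bigr)_{w\in W,\,s\in S}$ appearing in \cref{relation_module_submodule}. Hence $\Ima(1\otimes\partial_2)$ equals the image of the $d_2$ of that lemma, which is $\isom N_R$.

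The one genuine step (as opposed to bookkeeping) is the Fox-calculus identification at the end: one must fix basepoints and lifts consistently, verify that the coefficient of a lift of the $1$-cell $s$ in the boundary of a lift of the $2$-cell with attaching word $w$ is exactly $\phi(\partial w/\partial s)$, and check $G$-equivariance. This is classical and is essentially the content of the fact that the cellular chain complex of $\widetilde{X}$ in low dimensions coincides with the first two terms of the free resolution of the trivial $RG$-module arising from the presentation; I would handle it either by citing this, or, if a self-contained proof is preferred, by induction on the length of $w$ in the free generators $S$, using the Leibniz rule satisfied by both $\partial/\partial s$ and the cellular boundary map.
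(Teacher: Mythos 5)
Your argument is correct and follows the paper's own route: contract a spanning tree to reduce to the single‑vertex case, observe that the horizontal isomorphism on $C_2$ and injection on $C_1$ identify the images of the two second boundary maps, and then identify the boundary map of the presentation complex with the Jacobi matrix so that \cref{relation_module_submodule} applies. The paper records exactly these observations in the paragraph preceding the lemma rather than in a formal proof environment, so you have essentially reproduced its proof.
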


\subsection{A simple lemma on integer sequences}

The following seemingly uninteresting lemma will be important in \cref{sec:hierarchies} when constructing hierarchies for one-relator products.

\begin{lemma}
\label{number_lemma}
Let $a\leqslant b\in \N$ be a pair of coprime integers  and let 
\[
0 = i_1, j_1, i_2, \ldots, j_n, i_{n+1} = 0
\]
be a sequence of non-negative integers with $i_l - j_l \in b\Z$ and $j_l - i_{l+1}\in a\Z$ for each $l$. Either $\sum_{i=1}^n (i_l - j_l) = 0$ or for some $l$ we have $i_l\geqslant a+b-1$ or $j_l\geqslant a+b-1$.
\end{lemma}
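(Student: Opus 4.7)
The plan is to construct an integer-valued potential $\phi\colon \{0, 1, \ldots, a+b-2\} \to \Z$ which is constant on residue classes modulo $a$ and which satisfies $\phi(y + b) = \phi(y) + 1$ whenever both $y$ and $y+b$ lie in the range. Given such a $\phi$, the lemma will follow by telescoping: the bound $a + b - 2 < 2b$ (which uses $a \leqslant b$) together with $i_l - j_l \in b\Z$ forces $i_l - j_l \in \{-b, 0, b\}$, so $\phi(j_l) - \phi(i_l) = (j_l - i_l)/b$. The a-invariance combined with $j_l \equiv i_{l+1} \pmod{a}$ gives $\phi(i_{l+1}) - \phi(j_l) = 0$. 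Summing from $l = 1$ to $n$ and using $i_1 = i_{n+1} = 0$,
\[
0 \;=\; \phi(i_{n+1}) - \phi(i_1) \;=\; \frac{1}{b}\sum_{l=1}^{n}(j_l - i_l),
\]
and hence $\sum_{l=1}^n (i_l - j_l) = 0$.

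To build $\phi$, I would introduce the weighted graph $G$ on the vertex set $V = \{0, 1, \ldots, a+b-2\}$ with an \emph{a-edge} of weight $0$ between any two distinct vertices in the same residue class modulo $a$, and a \emph{b-edge} of weight $+1$ (oriented forward) between $y$ and $y+b$ whenever both lie in $V$. Setting $\phi(0) = 0$ and extending $\phi(x)$ to be the sum of edge weights along any path from $0$ to $x$, the function $\phi$ is well-defined precisely when every cycle of $G$ has total weight zero.

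The structural heart of the argument is that $G$ is a tree at the class level. The a-edges decompose $V$ into residue classes modulo $a$, each a complete subgraph. The b-edges are precisely $(x, x + b)$ for $x \in \{0, 1, \ldots, a - 2\}$, giving $a - 1$ b-edges; after contracting each class to a point, these induce the assignment $c \mapsto c + \beta \pmod{a}$ for $c \in \{0, 1, \ldots, a - 2\}$, where $\beta = b \bmod a$. Since $\gcd(a, \beta) = 1$, iterating $c \mapsto c + \beta$ enumerates all residues; class $a - 1$ has no outgoing b-edge and class $(\beta - 1) \bmod a$ has no incoming one, so the class-level directed graph is a path that visits every residue exactly once, and its underlying undirected graph is a tree. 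Any cycle in $G$ therefore projects to a closed walk in this class tree, which must traverse every tree edge equally often in each direction. Hence the contributions of forward and backward b-edges cancel, the a-edges contribute zero, and every cycle of $G$ has total weight zero as required.

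The main obstacle will be carrying out the in- and out-degree accounting at the class level cleanly; this is where $\gcd(a, b) = 1$ enters essentially. The degenerate case $a = 1$ can be dispatched separately and trivially: there $|i_l - j_l| \leqslant b - 1$ combined with $i_l - j_l \in b\Z$ forces $i_l = j_l$ for every $l$, so the sum vanishes.
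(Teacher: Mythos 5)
Your proposal is correct, and it takes a genuinely different route from the paper's. The paper proves the lemma by a delicate induction on $n$: assuming all $i_l, j_l < a+b-1$ and $\sum(i_l - j_l) \neq 0$, it first reduces to the case where all the $i_l$ are distinct and all $i_l - j_l \neq 0$, then performs a sign analysis to conclude that $i_l - j_l = -b$ for every $l$, so $n \geqslant a$ and distinctness forces some $i_l \geqslant a-1$, whence $j_l = i_l + b \geqslant a+b-1$. Your argument instead packages the invariant once and for all in a potential $\phi$ on $\{0,\ldots,a+b-2\}$ that is mod-$a$-invariant and increases by $1$ under a forward $b$-step; a routine telescope then gives $\sum(i_l - j_l) = 0$. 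The heart of the matter in both proofs is coprimality, but it enters very differently: in the paper it produces the inequality $n_1 b = n_2 a \geqslant ab$, whereas in your argument it is precisely what makes the class-level graph a simple path (hence a tree), so the cocycle condition $\phi(y+b) = \phi(y)+1$ is consistent on the range $\{0,\ldots,a+b-2\}$. Your approach is shorter, avoids the sequence surgery, and makes transparent why the threshold $a+b-1$ appears (it is where the mod-$a$/mod-$b$ structure on a contiguous interval stops being a tree). One small remark: what you really need from the $a$-edges is only connectivity within each residue class, not the full complete graph; and it is worth stating explicitly (as you implicitly use) that each undirected edge of the class tree is covered by exactly one $b$-edge of $G$, so that ``traverse each tree edge equally often in each direction'' is literally a statement about forward and backward uses of $b$-edges.
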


\begin{proof}
We assume that $\sum_{i=1}^n(i_l-j_l) \neq 0$ and $i_l, j_l<a+b-1$ for all $l$ and derive a contradiction. The proof proceeds by induction on $n$. Suppose there is some $1\neq p< q$ such that $i_p = i_q$. After possibly subtracting the minimum value from each term and cyclically permuting the subsequence $i_p, j_p, \ldots, i_{q-1}, j_{q-1}, i_q = i_p$, we may ensure that it is of the form required by the lemma and hence conclude by induction that either $\sum_{l = p}^q(i_l - j_l)= 0$ or $i_l\geqslant a+b-1$ or $j_l\geqslant a+b-1$ for some $l$. In the second case we have obtained a contradiction, in the first case we may replace the subsequence $i_p, j_p, \ldots, i_q$ with $i_q$ and also reach a contradiction by induction. Thus, from now on we may assume that $i_p\neq i_q$ for all $1\neq p\neq q$. If $(i_l - j_l) = 0$, then if we replace the subsequence $j_{l-1}, i_l, j_l$ with $j_l+j_{l+1}$, we obtain a shorter sequence also satisfying our assumptions and not changing the value of $\sum_{i=1}^n(i_l - j_l)$. Thus, we obtain a contradiction by induction in this case too. Finally, assume that $i_l - j_l\neq 0$ for all $l$. If there is some $l$ such that $i_l - j_l>0$ and $j_l - i_{l+1}>0$ or $i_l - j_l<0$ and $j_l - i_{l+1}<0$, then $i_{l+1} - i_l \geqslant a+b$ or $i_l - i_{l+1}\geqslant a+b$ and we reach a contradiction. Similarly, if there is some $l$ such that $j_l - i_{l+1}>0$ and $i_{l+1} - j_{l+1}>0$ or $j_l - i_{l+1}<0$ and $i_{l+1} - j_{l+1}<0$, then we reach a contradiction. Since $i_1 - j_1<0$, this implies that $i_l - j_l<0$ and $j_l-i_{l+1}>0$ for all $l$. Moreover, since $a$ and $b$ are coprime and
\begin{align*}
-\sum_{i=1}^n(i_l - j_l) = n_1b = n_2a = \sum_{i=1}^n(j_l - i_{l+1})
\end{align*}
for some $n_1, n_2>0$, we must have $-\sum_{l=1}^n(i_l-j_l) = n_1b\geqslant ab$. If $-(i_l - j_l)\geqslant 2b$ for some $l$, then $j_l> 2b\geqslant a+b$, a contradiction. Thus we have that $(i_l - j_l) = -b$ for all $l$. In particular, we have that $n\geqslant a$. Since $i_p\neq i_q$ for all $1\neq p\neq q$, there are at least $a$ distinct non-zero values of $i_l$ and hence $i_l\geqslant a-1$ for some $l$. Then, combined with the fact that $j_l - i_l = b$, we have $j_l\geqslant a+b-1$, a contradiction.
\end{proof}

\section{One-relator products and elementary reductions}

If $A$ and $B$ are groups, we shall regard elements $w\in A*B - 1$ as words over the alphabet $(A-1)\sqcup (B-1)$. A word $w\in A*B$ then has a well-defined length over this alphabet. It is freely reduced if there is no subword of $w$ consisting only of letters from $A - 1$ or $B-1$ that is equal to the identity in $A$ or $B$ respectively. It is cyclically reduced if each cyclic rotation of $w$ is also freely reduced.

Let $A$ and $B$ be groups and let $w\in A*B$ be a cyclically reduced word. We call $G = \frac{A*B}{\normal{w}}$ a \emph{one-relator product}. If $A$ and $B$ are both locally indicable, Howie proved in \cite{Ho81} that the maps $A\to G$ and $B\to G$ are actually injective, a result which can be considered a generalisation of Magnus' Freiheitssatz for one-relator groups. The following statement combines this result with \cite[Theorem 4.2 \& Corollary 3.4]{Ho82}.

\begin{theorem}[Howie]
\label{inclusion}
Let $A$ and $B$ be locally indicable groups and let $w\in A*B$ be a cyclically reduced word of length at least two that is not a proper power. Let $n\geqslant 1$ be an integer and denote by $G = \frac{A*B}{\normal{w^n}}$. The following hold
\begin{enumerate}
\item The natural maps $A, B\to G$ are injective.
\item If $u$ is a proper subword of (a cyclic conjugate of) $w^n$, then $u \neq_G 1$.
\item If $n = 1$, then $G$ is locally indicable.
\end{enumerate}
\end{theorem}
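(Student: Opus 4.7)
The plan is to follow Howie's tower-lifting strategy of \cite{Ho81,Ho82}. Let $X$ be the $2$-complex obtained by gluing a presentation $2$-complex for $A$ and one for $B$ at a common $0$-cell and attaching a single $2$-cell along $w^n$, so that $\pi_1(X) = G$. All three statements translate into the assertion that certain loops in $X$ cannot be nullhomotopic, so the goal is to understand every map $f\colon K\to X$ from a compact $2$-complex $K$ (essentially every van Kampen diagram over the presentation). First I would reduce to $A$ and $B$ finitely generated: since subgroups of locally indicable groups are locally indicable, I may replace $A$ and $B$ by the finitely generated subgroups carrying the letters of $w$ together with those appearing in the loop under consideration.

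The main engine is Howie's tower lemma: every such $f\colon K\to X$ factors through a \emph{maximal tower} $K\to \widetilde X_m\to \widetilde X_{m-1}\to\cdots\to \widetilde X_0 = X$, where each arrow is either a covering map or the inclusion of a connected subcomplex, and maximality forces $H_1(\widetilde X_m;\Z)=0$. For parts (1) and (2), I take $f$ to be a disc diagram spanning the putatively nullhomotopic loop and analyse $\widetilde X_m$. Covering it by the preimages $\widetilde X_A,\widetilde X_B$ of the presentation complexes of $A$ and $B$, together with the finitely many lifts of the $w^n$-$2$-cell, local indicability of $A$ and $B$ forces each non-simply-connected component of $\widetilde X_A\cap \widetilde X_m$ and $\widetilde X_B\cap \widetilde X_m$ to admit a surjection to $\Z$. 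Combined with $H_1(\widetilde X_m;\Z)=0$ and a Mayer--Vietoris calculation, this pins $\widetilde X_m$ down to such a restricted form that the boundary of some $w^n$-$2$-cell must display $w^n$ entirely along the disc boundary, contradicting the hypothesis that $u$ is a proper subword (for (2)) or that the loop lies in $X_A$ alone (for (1)). The assumption that $w$ is not a proper power is used here to rule out coincidences between cyclic conjugates of $w$ in the Mayer--Vietoris step.

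For (3), assuming (1) and (2), take a non-trivial finitely generated $H\leqslant G$, pass to the cover $\widetilde X_H \to X$, and choose a finite connected subcomplex $Y\subset \widetilde X_H$ containing loops generating $H$. Local indicability of $A$ and $B$ yields compatible maps to $\Z$ on each lifted copy of $X_A$ and $X_B$ sitting inside $Y$. When $n=1$ each $2$-cell of $Y$ is attached along a single conjugate of $w$, and parts (1) and (2) ensure that this attaching word is non-trivial in the relevant subgroup of $A*B$; its image in the abelianisation is therefore a single element that can be killed by a consistent choice of signs on the $\Z$-valued maps on the $1$-skeleton. This extends over the $2$-cells to a map $Y\to \Z$, and hence produces a non-trivial map $H\to \Z$. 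The argument breaks for $n\geqslant 2$ because the relator then contributes $n$ times a class in $H_1$, introducing torsion which obstructs the extension, consistent with the fact that such $G$ typically fail to be locally indicable.

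The hardest step will be the maximal-tower analysis in the second paragraph: organising the combinatorics of how cyclic conjugates of $w$ lift into $\widetilde X_m$, and correctly applying local indicability at each stage of the tower while keeping the restriction that $w$ is not a proper power in play. This is precisely where the bulk of the work in \cite{Ho81,Ho82} lies, and I would not expect a substantially shorter argument.
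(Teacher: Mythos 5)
The paper does not prove this theorem; it states it as a quotation of Howie, attributing item (1) to \cite{Ho81} and items (2) and (3) to \cite[Theorem 4.2 \& Corollary 3.4]{Ho82}. So there is no in-paper proof to compare against. Your sketch correctly identifies Howie's tower machinery as the engine, and for items (1) and (2) your outline --- reduce to finitely generated $A,B$, factor a disc diagram through a maximal tower with $H_1=0$ at the top, then run a Mayer--Vietoris argument using local indicability of the pieces --- is the right skeleton of the cited proof, with the well-known caveat (which you acknowledge) that the combinatorial analysis at the top of the tower is where all the work lives.

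For item (3), however, there is a genuine gap. You propose to build a nonzero homomorphism $H\to\Z$ by choosing $\Z$-valued maps on the lifted copies of $X_A$ and $X_B$ inside a compact $Y\subset\widetilde X_H$ and then claiming the attaching words of the $2$-cells ``can be killed by a consistent choice of signs.'' This is exactly the point that needs proof: the lifted $2$-cells and the identifications among the lifted copies of $X_A$ and $X_B$ impose a system of simultaneous linear constraints on any candidate class in $H^1(Y;\Z)$, and nothing in the sketch explains why this system has a nonzero solution rather than forcing the class to vanish. Howie's actual proof of local indicability in \cite{Ho82} does not reduce to a sign choice; it runs another tower/induction argument in which the non-proper-power hypothesis enters to guarantee the relevant rank inequality for the first homology of the top of the tower. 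As written, your step for (3) asserts the conclusion rather than deriving it, and filling the hole essentially means reproducing Howie's Section~3 argument.
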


Let $Z$ be a $2$-complex and $X\subset Z$ a subcomplex. We say that $X\subset Z$ is an \emph{elementary reduction} if $Z - X$ consists of a single 1-cell $e$ and at most one $2$-cell whose attaching map is not freely homotopic within $X\cup e$ into $X$. A $2$-complex $Z$ is \emph{reducible to $X\subset Z$} if there are subcomplexes $X = X_0 \subset X_1\subset \ldots \subset Z$ such that $Z = \bigcup X_i$ and $X_i\subset X_{i+1}$ is an elementary reduction for all $i$. A $2$-complex $X$ is \emph{reducible} if it is reducible to $X^{(0)}$. If $X$ is reducible, say $X$ is \emph{without proper powers} if for each $i$ such that $X_{i+1} - X_i = e\cup c$ with $e$ a 1-cell and $c$ a $2$-cell, the attaching map for $c$ is not equal to a proper power in $\pi_1(X_i\cup e, x)$. We state a corollary to \cref{inclusion} in terms of the language we have just introduced.

\begin{corollary}
\label{inclusion_corollary}
Let $Z$ be a connected $2$-complex and let $X\subset Z$ be an elementary reduction such that $Z - X = e\cup c$, where $e$ is a 1-cell and $c$ is a $2$-cell. If $\pi_1(X, x)$ is locally indicable for every choice of basepoint $x\in X$, then:
\begin{enumerate}
\item The map $\pi_1(X, x)\to \pi_1(Z, x)$ induced by inclusion is injective for each choice of basepoint $x\in X$.
\item If $\tilde{e}, \tilde{c}\subset \widetilde{Z}$ are lifts of $e, c\subset Z$ to the universal cover $\widetilde{Z} \to Z$, then the difference between the number of times the attaching map for $\tilde{c}$ traverses $\tilde{e}$ in one direction and the number of times it traverses $\tilde{e}$ in the other direction is either $-1$, $0$ or $1$.
\item If the attaching map for $c$ is not a proper power in $\pi_1(X\cup e)$, then $\pi_1(Z)$ is locally indicable.
\end{enumerate}
\end{corollary}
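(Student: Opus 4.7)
The plan is to recognise $Y := X \cup e$ as a $1$-complex whose fundamental group splits as a free product, apply Howie's theorem (\cref{inclusion}) to the attaching map $\phi$ of $c$, and read off (1)--(3). Since $Z$ is connected, so is $Y$, and two sub-cases present themselves. If $X$ is connected then $\pi_1(Y,x) \cong \pi_1(X,x) * \langle t \rangle$, where $t$ is the class of a loop running along $e$; if $X$ has two components they must be joined by $e$ for $Y$ to be connected, giving $\pi_1(Y,x) \cong \pi_1(X_0) * \pi_1(X_1)$ after basepoint identification via $e$. In both configurations the two free factors $A, B$ of $\pi_1(Y)$ are locally indicable (by hypothesis on $X$, and trivially for the $\mathbb{Z}$ factor). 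Free-homotoping within $Y$, I may assume $\phi$ is cyclically reduced in $A*B$; the elementary-reduction hypothesis then says $\phi$ is not conjugate into $A$ (and, in the two-component sub-case, not into $B$ either). Writing $\phi = w^n$ with $w$ cyclically reduced and not a proper power, we get $|w| \geq 2$ in $A*B$, except possibly in the one-component sub-case where $w$ could equal $t^{\pm 1}$.

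I would handle the length-$1$ exception by hand. Here $\pi_1(Z) \cong \pi_1(X) * \mathbb{Z}/n$, so the factor $\pi_1(X)$ injects, giving (1); the lift of $\phi$ in $\widetilde{Z}$ traces the $n$ pairwise distinct lifts $\tilde e, t\tilde e, \ldots, t^{n-1}\tilde e$ (distinct because $t$ has order exactly $n$ in $\pi_1(Z)$) each exactly once, giving (2); and (3) has nonvacuous hypothesis only when $n=1$, in which case $\pi_1(Z) = \pi_1(X)$ is locally indicable. In the main case $|w| \geq 2$, \cref{inclusion} applies: part (1) is immediate from \cref{inclusion}(1), which embeds both $A$ and $B$ into $G = \pi_1(Z)$; and when the hypothesis of (3) holds the non-proper-power condition forces $n=1$, so \cref{inclusion}(3) gives local indicability of $G$.

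For (2), the coefficient of $\tilde e$ in $\partial_2[\tilde c]$ equals the signed count of traversals of $\tilde e$ by the lifted attaching map of $\tilde c$ in $\widetilde{Z}^{(1)}$. Reading $\phi$ letter-by-letter and recording the deck-group element (``state'') reached, each $e$-traversal crosses the specific lift $g \tilde e$ where $g$ is the state at that point; the count at $\tilde e$ is therefore the signed number of positions where the state equals $1 \in G$. If its absolute value were $\geq 2$, I could pick two such positions $k_1 < k_2$; the combinatorial subword of $\phi$ strictly between them would be trivial in $G$, and the aim is to interpret it as a proper non-empty subword of $w^n$ in the free-product sense, contradicting \cref{inclusion}(2). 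In the two-component sub-case this is clean, because $e$-traversals occur precisely at boundaries between $A$- and $B$-syllables of $\phi$. In the one-component sub-case with $|w| \geq 2$ the $B$-syllables are powers $t^{m_i}$, so a naive subword may cut through a $B$-letter; here I would first use injectivity of $B \to G$ from \cref{inclusion}(1) (so $t$ has infinite order in $G$) to show that within a single $t^{m_i}$-block the state attains $1$ at most once, forcing the two coincident positions to lie in distinct blocks, and then manipulate the subword so that its endpoints fall at block boundaries, giving the desired proper free-product subword to plug into \cref{inclusion}(2).

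The main obstacle will be this subword-extraction in the one-component sub-case: since $B = \mathbb{Z}$ is a single cyclic factor, the combinatorial subword produced from two coincident ``state $= 1$'' positions can cut a $t^{m_i}$ letter, and is therefore not a subword of $w^n$ in the strict free-product sense needed by \cref{inclusion}(2). Promoting it to a clean proper subword that still evaluates to $1$ in $G$, using that $t$ has infinite order in $G$ and some careful bookkeeping over which blocks can contribute simultaneously to the same lift of $e$, is the delicate combinatorial point to pin down.
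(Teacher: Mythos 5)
Your overall framing coincides with the paper's: view $Y = X \cup e$ as a complex with $\pi_1(Y) \cong A * B$ (two components of $X$) or $\pi_1(X) * \Z$ (one component), free-homotope the attaching map to be cyclically reduced, and apply Howie's \cref{inclusion}. Parts (1) and (3), and the length-one exception, are handled correctly and essentially as the paper does. For (2) the paper proceeds a little differently: it keeps the attaching map in path form $e^{\epsilon_0}*p_0*\cdots*e^{\epsilon_n}*p_n$ and inducts on $n$, deleting adjacent pairs $e^{\epsilon_i}*p_i*e^{\epsilon_{i+1}}$ with $\epsilon_{i+1}=-\epsilon_i$ and $p_i$ nullhomotopic --- these lift to the same $\tilde e$ traversed with opposite orientations and so contribute zero to the signed count --- and in the base case (cyclically reduced over $\pi_1(X)*\Z$) invokes \cref{inclusion}(2) in the form: no proper non-empty loop subpath of the lifted attaching map traverses $\tilde e^{\pm 1}$.

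Your diagnosis of the difficulty in (2) is accurate and is the crux, so you are right to flag it as unresolved in your write-up. In the one-component case a $\Z$-syllable $t^{m_i}$ can be cut mid-block by the subpath between two coincident ``state $=1$'' positions, and \cref{inclusion}(2) as stated concerns subwords in the free-product (syllable) sense. Snapping to block boundaries is the right instinct, but on its own produces a clean subword that evaluates to $t^{\,r_1-r_2}$ in $G$ (with $r_1,r_2$ the offsets of the two positions inside their respective blocks), and \cref{inclusion}(2) does not forbid a proper subword from equalling a nonzero power of $t$. The infinite order of $t$ from \cref{inclusion}(1) correctly shows the within-block states are distinct, but the cross-block comparison needs a further ingredient: either the disjointness $A\cap\langle t\rangle=1$ in $G$ (also available from Howie's Freiheitssatz machinery, though not recorded in \cref{inclusion} as stated), or the paper's inductive cancellation of opposite adjacent pairs together with the loop-subpath formulation. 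So: same strategy as the paper; the delicate combinatorial point you identify is precisely where the argument must still be completed, and the paper's induction on the number of $e$-traversals is the device you would need to add or replace your direct state-count with.
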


\begin{proof}
We may assume that $Z$ is connected. Choose an orientation on $e$ so that we may consider $e$ as a path and let $e^{-1}$ denote the path traversing $e$ in the opposite direction. Let
\[
w = e^{\epsilon_0}*p_0*\ldots*e^{\epsilon_n}*p_n
\]
be the loop in $Z$ determined by the attaching map for $c$, well-defined up to cyclic conjugation and inverting, where each $p_i$ is a path supported in $X$ and where $\epsilon_i\in\{\pm1\}$. Such a factorisation exists because the attaching map of $c$ does not lie in $X$. Now there are two cases to consider, according to whether $X$ is connected or not. 

Let us first suppose that $X$ is connected. Then the two endpoints of $e$ lie in $X$ and we have $\pi_1(X\cup e) \isom \pi_1(X)*\Z$. Since the attaching map of $c$ is not freely homotopic into $X$, we may apply \cref{inclusion} to obtain the first and third statements. Now let $\tilde{w} = \tilde{e}_0^{\epsilon_0}*\tilde{p}_0*\ldots*\tilde{e}_n^{\epsilon_n}*\tilde{p}_n$ be the lift of $w$ corresponding to the attaching map of the lift $\tilde{c}$ of $c$. The proof proceeds by induction on $n$. If $w$, as a word over $\pi_1(X)*\Z$, is cyclically reduced, then we may apply \cref{inclusion} to conclude that there is no proper non-empty subpath of the attaching map of $\tilde{c}$ that traverses $\tilde{e}^{\pm1}$ and that is a loop. Hence, $\tilde{w}$ traverses $\tilde{e}$ at most once in one of the two directions and so the second conclusion holds. This, for instance, holds if $n = 0, 1$. Now assume the inductive hypothesis and suppose that $w$ is not cyclically reduced as a word over $\pi_1(X)*\Z$. Then, for some $i$, we have that $\epsilon_i = -\epsilon_{i+1}$ and $p_i$ is a nullhomotopic loop in $X$. Here we are considering the indices modulo $n+1$. This implies that $\tilde{e}_i^{\epsilon_i}*\tilde{p}_i*\tilde{e}_{i+1}^{\epsilon_{i+1}}$ is a loop and hence that $\tilde{e}_i^{\epsilon_i}$ and $\tilde{e}_{i+1}^{\epsilon_{i+1}}$ together contribute $0$ to the difference between the number of times $\tilde{w}$ traverses $\tilde{e}$ and the number of times it traverses $\tilde{e}^{-1}$. By removing $e^{\epsilon_i}*p_i*e^{\epsilon_{i+1}}$ from $w$, we obtain a new path $w'$ which is strictly shorter than $w$. Since $w$ is freely homotopic to $w'$ in $X\cup e$, we may apply the inductive hypothesis to conclude that the difference between the number of times $\tilde{w}'$ traverses $\tilde{e}$ and $\tilde{e}^{-1}$ is either $-1, 0$ or $1$. Hence, the same holds for $\tilde{w}$.

The case in which $X$ is not connected is almost identical and so we leave it to the reader.
\end{proof}

\begin{corollary}
\label{boundary_corollary}
Let $R$ be a ring, let $Z$ be a connected $2$-complex and let $X\subset Z$ be an elementary reduction such that $Z - X = e\cup c$, where $e$ is a 1-cell and $c$ is a $2$-cell. Suppose that $\pi_1(X, x)$ is locally indicable for every choice of basepoint $x\in X$. Denote by $G = \pi_1(Z)$, let $p\colon \widetilde{Z}\to Z$ be the universal cover, let $\overline{X} = p^{-1}(X)$ and let $\tilde{e}, \tilde{c}\subset \widetilde{Z}$ be lifts of $e, c\subset Z$. Then 
\[
(1\otimes\partial_2)(1\otimes[\tilde{c}])\in C_1(\overline{X}, R) + r\cdot [\tilde{e}]
\]
for some $0\neq r = \sum_{g\in G}r_g\cdot g\in RG$ such that $r_g\in \{-1, 0, 1\}$ for all $g\in G$.
\end{corollary}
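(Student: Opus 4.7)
The plan is a direct cellular computation of $\partial_2([\tilde c])$. Since $\overline{e} = p^{-1}(e)$ is a disjoint union of lifts of $e$ parametrised by the $G$-action, the 1-skeleton decomposes as $\widetilde{Z}^{(1)} = \overline{X} \cup \overline{e}$, and correspondingly
\[
C_1(\widetilde{Z}, R) = C_1(\overline{X}, R) \oplus RG\cdot[\tilde{e}].
\]
So $(1\otimes\partial_2)(1\otimes[\tilde c])$ can be written uniquely as $A + r\cdot[\tilde e]$ with $A\in C_1(\overline X, R)$ and $r = \sum_g r_g g\in RG$, and by the cellular boundary formula $r_g$ equals the signed count of traversals of $g\tilde{e}$ by the attaching map of $\tilde{c}$; equivalently, of $\tilde{e}$ by the attaching map of the lift $g^{-1}\tilde{c}$ of $c$. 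The bound $r_g\in\{-1,0,1\}$ is then immediate from \cref{inclusion_corollary}(2) applied to this lift.

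The real content is verifying $r\neq 0$. Write the attaching map of $c$ as a word $w = e^{\epsilon_0}p_0\cdots e^{\epsilon_n}p_n$ with the $p_i$ paths in $X$. The hypothesis that $w$ is not freely homotopic into $X$ within $X\cup e$ says exactly that the cyclically reduced form of $w$ in $\pi_1(X\cup e)$ still contains some letter $e^{\pm 1}$. I plan to pass to this cyclically reduced form without altering $r$: each reduction step deletes a subword $e^{\epsilon_i} p_i e^{\epsilon_{i+1}}$ with $\epsilon_i = -\epsilon_{i+1}$ and $p_i$ nullhomotopic in $X$. By \cref{inclusion_corollary}(1), such a $p_i$ lifts to a loop in $\widetilde Z$, so the two adjacent lifts of $e$ coincide and their $\pm\epsilon_i$ contributions to $r$ cancel out; cyclic conjugation merely amounts to a change of basepoint on $S^1$ and leaves $[\tilde c]$ untouched. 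After these moves we may assume $w$ is cyclically reduced with $n\geq 1$ instances of $e^{\pm 1}$, with $r$ unchanged.

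In this cyclically reduced case, the argument of the proof of \cref{inclusion_corollary}(2) via Howie's theorem \cref{inclusion} upgrades to show that the lifts $\tilde e_1,\dots,\tilde e_n$ appearing in $\tilde w$ are pairwise distinct: any coincidence would provide a proper non-empty subpath of $\tilde w$ traversing some $g\tilde e$ and forming a loop in $\widetilde Z^{(1)}$, which would project to a proper subword of $w$ trivial in $\pi_1(Z)$ and contradict \cref{inclusion}. Hence $r = \sum_{i=1}^n \epsilon_i h_i$ with the $h_i\in G$ pairwise distinct, so $r\neq 0$. The main delicate point is this last distinctness claim; everything else is bookkeeping built on top of the preceding corollary.
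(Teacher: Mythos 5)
Your proof is correct and fills in the argument the paper leaves implicit: the corollary is stated without a separate proof, and you rightly observe that $r\neq 0$ does not follow from the \emph{statement} of \cref{inclusion_corollary}(2) alone (a priori all net counts could vanish), but rather from its \emph{proof}, which via \cref{inclusion}(2) shows that after cyclic reduction the lifts of $e$ appearing in $\tilde w$ are pairwise distinct, so $r$ is a sum of distinct group elements each with coefficient $\pm 1$. One small slip: the fact that a nullhomotopic $p_i$ lifts to a loop in $\widetilde Z$ follows trivially from $X\subset Z$ (nullhomotopy in $X$ persists in $Z$); \cref{inclusion_corollary}(1) is the converse implication and is not what is needed at that step.
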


We now state the precise version of the definition of a lower trapezoidal matrix given in the introduction.

\begin{definition}
\label{trapezoidal_defn}
Denote by $[n]\subset \N$ the subset $\{1, 2, \ldots, n\}$ if $n\in \N$ and by $[n] = \N$ if $n=\infty$. If $M = \{m_{ij}\}_{i\in [m], j\in [n]}$ is an $m\times n$ matrix, we say that $M$ is \emph{lower trapezoidal} if for each $i\in [m]$, there is a $j_i\in [n]$ such that $m_{ij} = 0$ for all $j>j_i$ and such that $j_i< j_{i+1}$ for all $i, i+1\in [m]$. Call the entries $m_{ij_i}$ the diagonal entries of $M$.
\end{definition}

After possibly reordering the columns, \cref{trapezoidal_defn} coincides with the definition given in the introduction for finite lower trapezoidal matrices.

The following is one direction of our main theorem. This direction was essentially already known, but a satisfactory reference could not be found.

\begin{theorem}
\label{lower_trapezoidal}
Let $X$ be a reducible $2$-complex without proper powers and let $R$ be a ring. Then $1\otimes\partial_2\colon C_2(\widetilde{X}, R) \to C_1(\widetilde{X}, R)$ is given by a lower trapezoidal matrix over any natural basis.
\end{theorem}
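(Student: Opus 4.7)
The plan is to read off the lower trapezoidal structure directly from a reducibility filtration. By hypothesis there are subcomplexes $X^{(0)}=X_0\subset X_1\subset\ldots\subset X$ with $X=\bigcup_i X_i$ and $X_{i+1}\setminus X_i=e_i\cup c_i^?$, where $e_i$ is a 1-cell and $c_i^?$ is either empty or a single 2-cell $c_i$ whose attaching map is not freely homotopic into $X_i$ within $X_i\cup e_i$ and (by the absence of proper powers) is not a proper power in $\pi_1(X_i\cup e_i)$. Let $i_1<i_2<\ldots$ enumerate those stages at which a 2-cell is added. Choose a lift $\tilde{e}_i\subset\widetilde{X}$ of each $e_i$ and a lift $\tilde{c}_{i_k}\subset\widetilde{X}$ of each $c_{i_k}$, and order the resulting natural bases of $C_1(\widetilde{X},R)$ and $C_2(\widetilde{X},R)$ by $i$ and $k$ respectively. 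Any other natural basis differs from this one by left $G$-translation of the chosen lifts, which cannot affect the zero pattern of the matrix.

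The first step is to prove, by induction on $i$, that $\pi_1(X_i,x)$ is locally indicable for every basepoint $x\in X_i$. The base case is trivial. Attaching a 1-cell preserves local indicability (the resulting fundamental groups are either free products of locally indicable groups or free products with $\mathbb{Z}$, depending on whether the new 1-cell connects distinct components). When a 2-cell $c_i$ is additionally attached, the hypothesis that its attaching map is not a proper power together with part (3) of \cref{inclusion_corollary} completes the inductive step.

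The second step is, for each $k$, to apply \cref{boundary_corollary} to the elementary reduction consisting of the connected component of $X_{i_k}$ containing $c_{i_k}$ over its intersection with $X_{i_k-1}$; the local indicability hypothesis is supplied by step one. Since the attaching map of $c_{i_k}$ is supported in $X_{i_k-1}\cup e_{i_k}$, any lift to $\widetilde{X}$ lies in $p^{-1}(X_{i_k-1}\cup e_{i_k})$, so when $(1\otimes\partial_2)(1\otimes[\tilde{c}_{i_k}])$ is expanded in the chosen natural basis, the coefficient of $[\tilde{e}_j]$ vanishes for every $j>i_k$. \cref{boundary_corollary} additionally yields that the coefficient of $[\tilde{e}_{i_k}]$ is a nonzero element of $RG$.

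Setting $j_k:=i_k$, the $k$-th row of the resulting matrix of $1\otimes\partial_2$ is thus supported in columns $\leq j_k$, has a nonzero entry in column $j_k$, and satisfies $j_k<j_{k+1}$; this is precisely the condition of \cref{trapezoidal_defn}. The argument uses no finiteness hypothesis, so the case of infinite $X$ is covered as well. The substantive geometric inputs are \cref{inclusion_corollary} and \cref{boundary_corollary}; the only real obstacle I foresee is the (minor) bookkeeping needed to align the filtration indexing with the natural basis indexing and to extract the diagonal entries correctly.
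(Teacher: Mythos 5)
Your argument follows the same route as the paper: the same filtration, the same key inputs (\cref{inclusion_corollary} and \cref{boundary_corollary}), and the same observation that the attaching map of $c_{i_k}$ does not meet $e_j$ for $j>i_k$. The one genuine gap is in the nonvanishing of the diagonal entries. \cref{boundary_corollary}, applied to the component $Z'$ of $X_{i_k}$ containing $c_{i_k}$, produces a nonzero coefficient of $[\tilde e]$ in $R[\pi_1(Z')]$, and this coefficient lives in the chain complex $C_*(\widetilde{Z'},R)$ of the universal cover of $Z'$. You then read off that the coefficient of $[\tilde e_{i_k}]$ in $C_1(\widetilde X,R)$ is a nonzero element of $RG$ (where $G=\pi_1(X)$), but the induced ring map $R[\pi_1(Z')]\to RG$ could a priori annihilate that coefficient, so this does not follow directly. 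Filling the gap requires two things your write-up does not supply: (i) that $\pi_1(X_i,x)\hookrightarrow\pi_1(X,x)$ for all $i$ and all basepoints, obtained by inductively invoking part~(1) of \cref{inclusion_corollary} (your Step~1 cites only part~(3) and establishes local indicability, not injectivity); and (ii) that, as a consequence of injectivity, $p^{-1}(X_{i_k})\subset\widetilde X$ is a disjoint union of copies of the universal covers of the components of $X_{i_k}$, which yields the identification
\[
C_*(p^{-1}(X_{i_k}), R)\isom\bigoplus_{\alpha} C_*(\widetilde{X}_{i_k}^{\alpha}, R)\otimes_{R[\pi_1(X_{i_k}^{\alpha})]}RG
\]
under which nonvanishing transfers precisely because the base change is along an injective ring map. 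This is exactly the step the paper makes explicit, and it is a more substantive ingredient than the indexing bookkeeping you anticipated as the only remaining obstacle.
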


\begin{proof}
Let $X_0\subset X_1\subset \ldots \subset X$ be the subcomplexes such that $X_0 = X^{(0)}$, $X_{i}\subset X_{i+1}$ is an elementary reduction for each $i$ and $\bigcup X_i = X$. Since $X$ is without proper powers, we may apply \cref{inclusion} and induction to conclude that $\pi_1(X_i)\to \pi_1(X_{i+1})\to\pi_1(X)$ is injective for each choice of basepoint.

Let $c_i$ be the $i^{\text{th}}$ $2$-cell appearing in $X_1 - X_0, X_2 - X_1, \ldots$ and let $e_{i}$ be the 1-cell in $X_{i} - X_{i-1}$. Then let $j_i$ denote the index such that $c_i\subset X_{j_i} - X_{j_i - 1}$. A natural basis for $C_2(\widetilde{X}, R)$ corresponds to choices of lifts $\tilde{c}_i\subset \widetilde{X}$ of each $c_i$. Similarly, a natural basis for $C_1(\widetilde{X}, R)$ corresponds to choices of lifts $\tilde{e}_i\subset \widetilde{X}$ of each $e_i$. The matrix $M$ defining $1\otimes\partial_2$, with the given order on the basis, has $ij$-entry the projection of $1\otimes\partial_2([c_i])$ to the $1\otimes[e_j]$ factor. The $ij$-entry of $M$ is $0$ when $j>j_i$ since the attaching map for $c_i$ does not traverse $e_j$. Now all we need to show is that the $ij_i$-entry is non-zero for all $i$. Letting $\overline{X}_i = p^{-1}(X_i)$, where $p\colon \widetilde{X}\to X$ is the universal cover, this follows from the fact that
\[
C_i(\overline{X}_i, R) \isom \bigoplus_{X_i^{\alpha}\in \pi_0(X_i)}C_i(\widetilde{X}_i^{\alpha}, R)\otimes_{R[\pi_1(X_i^{\alpha})]}RG
\]
combined with \cref{boundary_corollary}.
\end{proof}

\section{Hierarchies for one-relator products}
\label{sec:hierarchies}

The aim of this section will be to prove the following hierarchy result for one-relator products. A version of this result is implicit in the work of Howie, see \cite{Ho82}.

\begin{theorem}
\label{hierarchy}
Let $A$ and $B$ be locally indicable groups, $w\in A*B$ an element not conjugate into $A$ or $B$ and not a proper power and $n\geqslant 1$ an integer. Then $G\isom \frac{A*B}{\normal{w^n}}$ can be obtained from subgroups of $A$ and $B$, free groups and free products of $\Z/n\Z$ by performing finitely many amalgamated free products and HNN-extensions.
\end{theorem}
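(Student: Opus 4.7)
The plan is to proceed by induction on the length $\ell(w)$ of the cyclically reduced word $w$ in $A * B$, while proving a slightly stronger inductive statement that permits $A$ and $B$ to already be amalgamations or HNN extensions built from the building blocks listed in the theorem. This strengthening is essential: the vertex groups produced at each step are not subgroups of the original $A$ and $B$ but rather amalgamations of such subgroups, so one cannot induct directly on the theorem statement.

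For the base case, $\ell(w) = 2$, so $w = ab$ with $a \in A \setminus \{1\}$, $b \in B \setminus \{1\}$. By \cref{inclusion}, the natural maps $A, B \to G$ are injective and $ab$ has order exactly $n$ in $G$. When $n = 1$, $G$ is directly an amalgamated free product $A *_C B$ with $C \cong \Z$ identified with $\langle a \rangle$ in $A$ and $\langle b^{-1} \rangle$ in $B$. When $n \geqslant 2$, $G$ can be realised via a pushout involving $A$, $B$ and a suitable free product of copies of $\Z/n\Z$ coming from conjugates of $\langle ab \rangle$ in $G$, which can in turn be assembled through a finite sequence of amalgamations among the allowed pieces.

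For the inductive step, the plan is to construct an HNN decomposition of $G$ whose vertex group is a one-relator product with strictly shorter defining word. Using local indicability of $A$ and $B$, pick surjections $\phi_A^0 : A_0 \to \Z$ and $\phi_B^0 : B_0 \to \Z$ where $A_0 \leqslant A$ and $B_0 \leqslant B$ are the finitely generated subgroups spanned by the syllables of $w$. Solve the one-variable linear equation $p\, \phi_A^0(a_1 \cdots a_m) + q\, \phi_B^0(b_1 \cdots b_m) = 0$ for coprime integers $p, q$ (always possible), and assemble these into a map $\phi : A_0 * B_0 \to \Z$ with $\phi(w) = 0$. Extending by zero on complements gives $\phi : A * B \to \Z$, which descends to a surjection $\phi : G \to \Z$, whence $G = K \rtimes \langle t \rangle$ for any $t$ with $\phi(t) = 1$. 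By Bass-Serre theory, the kernel $K$ is the fundamental group of a graph of groups whose vertex groups are $\langle t \rangle$-translates of $\ker \phi_A$ and $\ker \phi_B$ with trivial edge groups, modulo the $\langle t \rangle$-conjugates of a lift of $w^n$ read in the cover. After collecting translates into $t$-orbits (only finitely many because $w$ has finitely many syllables), this presents $K$ as a one-relator product over a finite amalgamation of subgroups of $A$ and $B$, with a defining word that is a cyclic reading of $w$ in the cover. \cref{number_lemma}, applied with $a, b$ the (coprime) generators of $\im \phi_B^0$ and $\im \phi_A^0$, controls the partial sums of $\phi$ along the syllables of $w$ and guarantees that this new defining word is strictly shorter than $w$.

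The main obstacle is the inductive bookkeeping. First, the new vertex factors live in the strengthened class rather than being literally subgroups of $A$ and $B$, so care is required to maintain the inductive hypothesis. Second, one must check that the length descent is genuine — that syllables of $w$ actually spread out across distinct $t$-orbits in the cover — which is precisely what \cref{number_lemma} enforces via its coprimality hypothesis. Local indicability is preserved at each step by \cref{inclusion}(3), so the inductive assumptions continue to apply, and the hierarchy terminates in finitely many steps once $\ell(w) = 2$ reduces to the base case.
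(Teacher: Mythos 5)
Your overall strategy — induct on the syllable length of $w$ and in the inductive step split $G$ as an HNN extension whose vertex group is a one-relator product with shorter defining word — is indeed the strategy the paper follows via \cref{amalgam}, \cref{HNN}, and induction. However, there is a genuine gap in the execution of the HNN step: you identify the vertex group of the HNN decomposition with the kernel $K = \ker(\phi\colon G \to \Z)$ and then assert that $K$ ``is a one-relator product over a finite amalgamation of subgroups of $A$ and $B$.'' This is not correct and cannot be repaired as stated. $K$ is the fundamental group of the entire infinite cyclic cover $Y\to Z$ of the presentation $2$-complex; $Y$ carries infinitely many $2$-cells (one for each $\langle t\rangle$-translate of a lift of the attaching cell), so $K$ has infinitely many independent relators and is in general not a one-relator product at all. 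The correct vertex group is not $K$ but $\pi_1(D)$ for a carefully chosen \emph{finite} subcomplex $D\subset Y$ (the $\Z$-domain of \cref{Z-domain} and \cref{find_splitting}), which contains exactly one lift of the $2$-cell; this is precisely why $\pi_1(D)$ is a one-relator product with a single, strictly shorter relator $u$. Without this object your induction does not return to a group of the required form.

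Two further inaccuracies compound this. First, \cref{number_lemma} is not used to establish length descent: in the paper it shows $M - m\geqslant a+b-1$, which guarantees that $D$ is \emph{connected} (one needs $a+b-1$ of the lifted $1$-cells to connect the $a+b$ components of $\rho^{-1}(X)$ before $D$ can be a $\Z$-domain). Length descent is a separate and much simpler observation: once one knows $m<M$, the length of $u$ is twice the number of traversals of $e_M$ and the length of $v$ is twice the number of traversals of $e_m$, both strictly less than the total $|w|$. Second, you omit the preliminary reduction via \cref{amalgam}, which expresses $G$ as $A\ast_{A_0}H\ast_{B_0}B$ with $H=\frac{A_0\ast B_0}{\normal{w^n}}$; without this step one cannot assume the syllables of $w$ generate the free product, which is a hypothesis of \cref{HNN}, nor does ``extending $\phi$ by zero on complements'' make sense for arbitrary subgroup inclusions $A_0\leqslant A$. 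Finally, the ``strengthened inductive statement'' you flag as essential is in fact unnecessary: the edge groups and the factor $J_0$ that arise are free products of subgroups of $A$, $B$ and a free group by Kurosh, so the original statement already closes under the recursion once one phrases it in terms of subgroups of $A$ and $B$ rather than $A$ and $B$ themselves.
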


The reader is directed to \cite{serre_80} for the relevant background on amalgamated free products, HNN-extensions and, more generally, graphs of groups.

\subsection{HNN-splittings via infinite cyclic covers}

In order to find HNN-splittings, we will need the notion of a $\Z$-domain, introduced in \cite{Lin22}. Recall that a $\Z$-cover is a covering space $\rho\colon Y\to X$ with $\deck(p) \isom \Z$.

\begin{definition}
Let $\rho\colon Y\to X$ be a $\Z$-cover and let $t\in \deck(\rho)$ be a generator. A connected subcomplex $D\subset Y$ is a \emph{$\Z$-domain} for $\rho$ if the following hold:
\begin{enumerate}
\item $\bigcup_{i\in \Z} t^i\cdot D = Y$.
\item $D\cap t\cdot D$ is connected and non-empty.
\item $D\cap t^i\cdot D\subset D\cap t^{i-1}\cdot D$ for all $i\geqslant 1$.
\end{enumerate}
\end{definition}

The point of this definition is that a $\Z$-domain in a $\Z$-cover gives rise to a HNN-splitting of the fundamental group of the base space. The following is \cite[Proposition 4.3]{Lin22} made explicit.

\begin{lemma}
\label{find_splitting}
Let $\rho\colon Y\to X$ be a $\Z$-cover, let $t\in \deck(\rho)$, let $D\subset Y$ be a $\Z$-domain for $p$ and denote by $D_0 = t^{-1}\cdot D\cap D$. If $\pi_1(D_0)\to \pi_1(D)$ and $\pi_1(t\cdot D_0)\to \pi_1(D)$ are injective, then
\[
\pi_1(X) \isom \pi_1(D)*_{\psi}
\]
where $\psi\colon \pi_1(D_0) \to \pi_1(t\cdot D_0)$ is induced by translation by $t$.
\end{lemma}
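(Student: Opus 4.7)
The plan is to exhibit $X$ itself as a graph of spaces with a single vertex and a single edge, and then invoke the van Kampen theorem for graphs of spaces (equivalently, Bass--Serre theory). The three $\Z$-domain axioms are engineered precisely to make this identification work.

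As a first step, I would unpack the axioms to pin down the combinatorics of the cover $\{t^iD\}_{i\in\Z}$ of $Y$. Iterating axiom~(3) gives $D\cap t^kD\subseteq D\cap tD=tD_0$ for every $k\geqslant 1$. Combined with axiom~(2) and translating, this upgrades to the equality $t^iD\cap t^{i+1}D=t^{i+1}D_0$ (which is connected and non-empty) for all $i$, while any further overlap $t^iD\cap t^jD$ with $|i-j|\geqslant 2$ is contained in the appropriate consecutive ones. Together with axiom~(1), this says that $\{t^iD\}$ endows $Y$ with the structure of a graph of spaces over the bi-infinite line graph, with vertex spaces $t^iD$ and edge spaces $t^{i+1}D_0$.

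Next I would pass to the base $X=Y/\langle t\rangle$ by observing that the composition $D\hookrightarrow Y\twoheadrightarrow X$ is surjective (by axiom~(1)) and that the first step shows its only non-trivial identifications are those that glue $D_0\subseteq D$ to $tD_0\subseteq D$ via translation by $t$. This realises $X$ as the graph of spaces with a single vertex labelled $D$, a single edge labelled $D_0$, and attaching maps $\alpha_0\colon D_0\hookrightarrow D$ and $\alpha_1\colon D_0\xrightarrow{\;t\;}tD_0\hookrightarrow D$; the infinite cyclic cover associated to this graph of spaces, obtained by unrolling along the single edge, is canonically identified with $Y$.

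Finally, with $X$ written as this one-vertex one-edge graph of spaces, the hypothesis that both $\pi_1(D_0)\to\pi_1(D)$ and $\pi_1(tD_0)\to\pi_1(D)$ are injective is exactly the $\pi_1$-injectivity of edge maps, so the van Kampen / Bass--Serre theorem gives
\[
\pi_1(X)\;\isom\;\pi_1(D)*_{\psi},
\]
where $\psi\colon\pi_1(D_0)\to\pi_1(tD_0)$ is the isomorphism induced by translation by $t$. The main obstacle is the bookkeeping verification that $X$ really coincides with this graph of spaces: axiom~(3) is what prevents the overlaps $D\cap t^iD$ for $|i|\geqslant 2$ from contributing extra, uncontrolled gluing data, and the first step is where this nested structure is crucially used.
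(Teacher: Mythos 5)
The paper does not actually give its own proof of this lemma: it is quoted as ``\cite[Proposition 4.3]{Lin22} made explicit,'' so there is no internal argument here to compare against. Judged on its own terms, your argument is correct and follows the natural route: axiom~(3), once unwound, forces the nested inclusions
\[
D\cap t^{k}D \;=\; D\cap tD\cap\cdots\cap t^{k}D \;\subseteq\; tD_0 \qquad(k\geqslant 1),
\]
and the analogous statement for negative powers, and this is exactly what guarantees that the cover of $Y$ by $\{t^iD\}$ behaves like a graph of spaces over the line with edge spaces $t^iD_0$ (overlaps of non-adjacent translates live inside the intermediate edge spaces and therefore contribute no extra gluing). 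Passing to $X=Y/\langle t\rangle$ via the surjection $D\twoheadrightarrow X$ from axiom~(1), and noting that $x,tx\in D$ forces $x\in D_0$, shows that the identifications in $X$ are generated by $x\sim tx$ for $x\in D_0$, giving the one-vertex one-edge graph of spaces. Van Kampen for graphs of spaces then yields the HNN presentation, and the injectivity hypotheses on $\pi_1(D_0),\pi_1(tD_0)\to\pi_1(D)$ identify it as a genuine HNN extension $\pi_1(D)*_\psi$. The only step worth stating more carefully is the implicit identification of the literal quotient $D/(x\sim tx,\ x\in D_0)$ with the double mapping cylinder used in the graph-of-spaces picture; this is a homotopy equivalence because one works with CW pairs, so $D_0\hookrightarrow D$ is a cofibration. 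You also rely on $D_0$ being connected to have a well-defined edge group with a single basepoint — this is supplied by axiom~(2). With those two points noted, the proof is complete and matches what one would expect the cited argument in \cite{Lin22} to do.
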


In order to be able to apply \cref{find_splitting} to our setting, we shall need to be able to find $\Z$-domains.

\begin{proposition}
\label{Z-domain}
Let $X\subset Z$ be an elementary reduction such that $\pi_1(X, x)$ is indicable for each choice of basepoint, such that $Z - X = e\cup c$ and such that the attaching map for $c$ is surjective on the 1-skeleton of $Z$. There is a $\Z$-cover $\rho\colon Y\to Z$, lifts $\tilde{c}, \tilde{e}\subset Y$ of $c, e$ such that the connected component $D$ of
\[
\rho^{-1}(X) \cup \tilde{c} \cup \bigcup_{m\leqslant i\leqslant M} t^i\cdot \tilde{e}
\]
containing $\tilde{c}$ is a $\Z$-domain, where here $m$ and $M$ are, respectively, the minimal and maximal integers such that the attaching map for $\tilde{c}$ traverses $t^m\cdot \tilde{e}$ and $t^M\cdot \tilde{e}$.
\end{proposition}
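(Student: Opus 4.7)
The plan is to construct a surjection $\phi\colon \pi_1(Z)\to \Z$, pass to the associated $\Z$-cover $\rho\colon Y\to Z$, and verify the three $\Z$-domain axioms for the component $D$ containing a chosen lift $\tilde{c}$. Assume $X$ is connected for exposition (the disconnected case is analogous). Fix a basepoint and a spanning tree for $X$ so that $\pi_1(Z^{(1)})\cong \pi_1(X)*\langle e\rangle$, and write the attaching map of $c$ as $w = p_0 e^{\epsilon_0}p_1\cdots e^{\epsilon_k}p_k$ with each $p_i\in \pi_1(X)$; let $N = \sum_i \epsilon_i$ be the exponent sum of $e$ in $w$.

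I would construct $\phi$ by cases. If $N = 0$, set $\phi|_{\pi_1(X)} = 0$ and $\phi(e) = 1$, so that $\phi(w) = N\phi(e) = 0$ and $\phi$ is surjective. If $N\neq 0$, use indicability to fix a surjection $\psi\colon \pi_1(X)\to \Z$, let $d = \gcd(N,\psi(p_0\cdots p_k))$, and define $\phi|_{\pi_1(X)} = (N/d)\psi$, $\phi(e) = -\psi(p_0\cdots p_k)/d$. Then $\phi(w) = 0$ and $\phi$ is surjective because its image contains the coprime integers $N/d$ and $\psi(p_0\cdots p_k)/d$. This yields the $\Z$-cover $\rho\colon Y\to Z$ with deck generator $t$.

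Next, fix lifts $\tilde{c},\tilde{e}\subset Y$ and identify $D$. In the $N = 0$ case, $\rho^{-1}(X) = \bigsqcup_{j\in\Z}\tilde{X}_j$ is a disjoint union of copies of $X$ with $\tilde{e}$ joining $\tilde{X}_0$ to $\tilde{X}_1$; since translates of $\tilde{e}$ are the only 1-cells between distinct sheets, the attaching map of $\tilde{c}$ must traverse each $t^j\tilde{e}$ for $m\leqslant j\leqslant M$, giving
\[
D = \tilde{c}\cup\bigcup_{m\leqslant j\leqslant M}t^j\tilde{e}\cup\bigcup_{m\leqslant j\leqslant M+1}\tilde{X}_j.
\]
In the $N\neq 0$ case, $\rho^{-1}(X)$ has $|N/d|$ components cyclically permuted by $t$; since $\phi(e)$ is coprime to $|N/d|$ by construction and the residues $s_i\bmod |N/d|$ visited by the lifted attaching map cover all of $\Z/|N/d|\Z$ (forcing $M - m + 1\geqslant |N/d|$), the translates $t^j\tilde{e}$ with $m\leqslant j\leqslant M$ span all components, yielding $D = \rho^{-1}(X)\cup\tilde{c}\cup\bigcup_{m\leqslant j\leqslant M}t^j\tilde{e}$.

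The three axioms then follow by direct inspection: axiom (1) from the fact that the translates cover all lifts of $c$, all lifts of $e$, and exhaust $\rho^{-1}(X)$; and axioms (2) and (3) from a direct computation of $D\cap t^iD$, which is a truncated union of the same form as $D$ and visibly nests as $i$ increases. Connectedness of $D\cap tD$ follows in the $N = 0$ case from the chain of sheets joined by the remaining 1-cells, and in the $N\neq 0$ case from the observation that the $M - m$ remaining translates $t^j\tilde{e}$ realise at least a Hamiltonian path in the Cayley cycle of $\Z/|N/d|\Z$ generated by $\phi(e)$. The main obstacle is the $N\neq 0$ case: the naive choice $\phi|_{\pi_1(X)} = 0$ forces $\phi = 0$, so indicability must be invoked via the $\gcd$ construction to balance the constraints $\phi(w) = 0$ and surjectivity; one must then use the combinatorics of the Cayley cycle—via coprimality of $\phi(e)$ with $|N/d|$ and the inequality $M - m + 1\geqslant |N/d|$—to ensure connectedness of $D\cap tD$.
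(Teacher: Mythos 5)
Your overall strategy matches the paper's: build a surjection $\phi\colon\pi_1(Z)\to\Z$ vanishing on the attaching word of $c$, pass to the corresponding $\Z$-cover, and verify the three $\Z$-domain axioms by controlling how the translates $t^i\tilde{e}$ connect the components of $\rho^{-1}(X)$. The difference is which case gets the details: you work through connected $X$ and wave off disconnected $X$ as analogous, whereas the paper works through disconnected $X$ in full and leaves connected $X$ as the exercise.

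There are two problems. First, the crucial step in your $N\neq 0$ case --- that the residues $s_i\bmod|N/d|$ visited by the lifted attaching map cover all of $\Z/|N/d|\Z$, and hence $M-m+1\geqslant|N/d|$ --- is asserted, not argued. It is in fact true, and follows from a discrete intermediate-value argument: the partial exponent sums $\sigma_i$ of $e$ form a $\pm1$ walk on $\Z$ from $0$ to (within one of) $N$, and $|N|=d\cdot|N/d|\geqslant|N/d|$, so the walk passes through at least $|N/d|$ consecutive integers; since $s_i\bmod c$ agrees with $\sigma_i\phi(e)\bmod c$ up to a constant and $\gcd(\phi(e),c)=1$, the covering follows. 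But you need to say something like this: as written, the sentence makes the key inequality look automatic when it is precisely the content of the proposition.

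Second, and more seriously, the disconnected case is genuinely not analogous to yours. When $X=X_1\sqcup X_2$ with $e$ joining them, the exponent sum of $e$ in $w$ is forced to be zero (each crossing $X_1\to X_2$ is matched by one $X_2\to X_1$), so your $N=0$ versus $N\neq 0$ dichotomy collapses. In the hard subcase $\phi(\pi_1(X_1))=b\Z$ and $\phi(\pi_1(X_2))=a\Z$ with $a,b\neq 0$ coprime, the components of $\rho^{-1}(X)$ are indexed by $\Z/b\Z\sqcup\Z/a\Z$ rather than by one cyclic group, and the heights alternate between increments in $b\Z$ and increments in $a\Z$. The discrete intermediate-value argument does not transfer; the paper instead formulates and proves the separate combinatorial \cref{number_lemma} (an induction on integer sequences with alternating $a\Z$/$b\Z$ jumps) to obtain $M-m\geqslant a+b-1$, and then argues that $a+b-1$ consecutive translates of $\tilde{e}$ tree-connect the $a+b$ components. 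Your proposal supplies nothing for this case, which is exactly the one the paper chose to detail because it is the delicate one.
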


\begin{proof}
Denote by $e_i = t^i\cdot \tilde{e}$ and by $\lambda$ the attaching map for $c$. By replacing $\tilde{c}$ with $t^{-m}\cdot \tilde{c}$, we may assume that $m = 0$. We have two cases to consider corresponding to whether $X$ is connected or not.

Suppose first that $X$ is not connected, denote by $X_1, X_2$ the two connected components. We have $H^1(Z)\isom (H^1(X_1)\oplus H^1(X_2))/(a, b)$ for some $a\in H^1(X_1)$ and $b\in H^1(X_2)$. Since $\pi_1(X_i)$ is indicable, $H^1(X_i)$ is free abelian of rank at least one for $i = 1, 2$. If $H^1(X_1)$ is not infinite cyclic or $a = 0$, there is an epimorphism $\phi\colon \pi_1(Z)\to \Z$ such that $\phi(\pi_1(X_2)) = 0$. Consider the cyclic cover $\rho\colon Y\to Z$ corresponding to this epimorphism. We have that $\rho^{-1}(X_1)$ is a connected infinite cyclic cover of $X_1$ and $\rho^{-1}(X_2)$ is a disjoint union of copies of $X_2$, each connected to $\rho^{-1}(X_1)$ via a (unique) edge $e_i$. In this case, it is not hard to see that the three conditions for $D$ to be a $\Z$-domain hold. By symmetry, if $H^1(X_2)$ is not cyclic or $b = 0$, there is a cyclic cover in which $D$ is a $\Z$-domain.

So now suppose that $H^1(X_1) \isom H^1(X_2)\isom \Z$ and that $a, b\neq 0$. We may also assume that $a\leqslant b$ and that $a$ and $b$ are coprime. Then there is a unique (up to change of sign) epimorphism $\phi\colon \pi_1(Z)\to \Z$ given by $\phi(\pi_1(X_1)) = b\Z$, $\phi(\pi_1(X_2)) = a\Z$. It follows that $\rho^{-1}(X_1) = \bigsqcup_{i=0}^{b - 1}\overline{X}_{1, i}$ where $\overline{X}_{1, i}\to X_1$ are copies of the induced connected cyclic cover and $\rho^{-1}(X_2) = \bigsqcup_{i=0}^{a - 1}\overline{X}_{2, i}$ where $\overline{X}_{2, i}\to X_2$ are copies of the induced connected cyclic cover. For each $i\in \Z$, the edge $e_i$ connects $\overline{X}_{1, i\mod(b)}$ with $\overline{X}_{2, i\mod(a)}$. Let $w = a_1b_1\ldots a_nb_n$ be the word given by $\lambda$ (unique up to cyclic conjugation and inversion), with $a_i\in \pi_1(X_1)$, $b_i\in \pi_1(X_2)$ for $i = 1, \ldots, n$. Consider the sequence 
\begin{align*}
i_1 &= \phi(\epsilon) = 0& j_1 &= \phi(a_1),\\
i_2 &= \phi(a_1b_1) & j_2 &= \phi(a_1b_1a_2),\\
&\vdots &  &\vdots\\
i_n &= \phi(a_1b_1\ldots b_{n-1})& j_n &= \phi(a_1b_1\ldots a_n),\\
i_{n+1} &= \phi(a_1b_1\ldots a_nb_n) = 0. & &
\end{align*}
We have $i_l - j_l\in b\Z$ and $j_l - i_{l+1}\in a\Z$. The integer $M -m$ is the difference between the maximum and minimum integer in this sequence. After possibly cyclically permuting and inverting $w$, we may assume that $i_l, j_l\geqslant 0$ for each $l$. We have
\begin{equation}
\label{eq}\sum_{l=1}^n(i_l - j_l) = -\sum_{l=1}^n(j_l-i_{l+1}) = kab\neq 0
\end{equation}
for some $k\in \Z$. By \cref{number_lemma} and \cref{eq}, we have $M - m \geqslant a+b -1$. This implies that 
\[
B = \rho^{-1}(X)\cup e_0\cup\ldots\cup e_{a+b - 2} \subset D_0.
\]
Since $\rho^{-1}(X)$ consists of precisely $a+b$ components and each addition of a new edge $e_i$ as $i$ ranges from $0$ to $a+b -2$ decreases the number of components by one, it follows that $B$ is connected. Since $D_0$ and $D$ are obtained from the above by adding 1-cells with boundary 0-cells lying in $\rho^{-1}(X)$ to $B$, it follows that $D_0$ and $D$ are connected.

If $X$ is connected, then $H^1(Z)\isom (H^1(X)\oplus H^1(C))/(a, b)$ for some $a\in H^1(X)$ and $b\in H^1(C)$, where $C\subset Z$ is any embedded cycle containing the edge $e$. Now the argument proceeds in essentially the same way as in the previous case. We leave this as an exercise to the reader.
\end{proof}

With this set-up, we may state the following useful variation of the classical Magnus splitting theorem. A version of this appeared in \cite{Ho82} and the case where $Z$ contains a single $2$-cell was handled in detail in \cite{Lin22}.

\begin{theorem}
\label{HNN_one-relator}
Let $X\subset Z$ be an elementary reduction such that $\pi_1(X, x)$ is locally indicable for each choice of basepoint, such that $Z - X = e\cup c$ and such that the attaching map for $c$ is surjective on the 1-skeleton of $Z$. There is a $\Z$-cover $\rho\colon Y\to Z$, lifts $\tilde{c}, \tilde{e}\subset Y$ of $c, e$ such that the connected component $D$ of
\[
\rho^{-1}(X) \cup \tilde{c} \cup \bigcup_{m\leqslant i\leqslant M} t^i\cdot \tilde{e}
\]
containing $\tilde{c}$ is a $\Z$-domain, where here $m$ and $M$ are, respectively, the minimal and maximal integers such that the attaching map for $\tilde{c}$ traverses $t^m\cdot \tilde{e}$ and $t^M\cdot \tilde{e}$. Moreover, denoting by $D_0 = t^{-1}\cdot D\cap D$, the following hold:
\begin{enumerate}
\item The homomorphisms $\pi_1(D_0), \pi_1(t\cdot D_0)\to \pi_1(D)$ induced by inclusion are injective.
\item If $\psi\colon \pi_1(D_0)\to\pi_1(t\cdot D_0)$ is the isomorphism induced by translation by $t$, then
\[
\pi_1(Z) \isom \pi_1(D)*_{\psi}.
\]
\end{enumerate}
\end{theorem}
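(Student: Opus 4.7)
The existence of $\rho\colon Y\to Z$, the generator $t\in\deck(\rho)$, the lifts $\tilde{c}$, $\tilde{e}$, and the $\Z$-domain $D$ of the stated form is exactly the content of \cref{Z-domain}, so I would simply quote it. My plan is then to verify the two injectivity hypotheses of \cref{find_splitting} and thereby deduce (1) and (2) in one stroke.

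For the injectivity, I would express $D$ as the terminal stage of a sequence of elementary reductions starting from $D_0$ (and separately from $t\cdot D_0$), and invoke \cref{inclusion_corollary}(1) at each step. This requires every intermediate complex to have locally indicable fundamental group at each basepoint. Each connected component of $\rho^{-1}(X)$ is a cover of a component of $X$, so its $\pi_1$ embeds in the corresponding (locally indicable) $\pi_1(X,\rho(x))$ and is itself locally indicable. The subcomplex $D_0$ is assembled from finitely many such components together with the edges $t^i\tilde{e}$ for $m\leqslant i\leqslant M-1$; each edge attachment is either a free product with $\Z$ or the free product of two locally indicable groups, and both operations preserve local indicability by the classical theorem of Brodskii.

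The enlargement $D_0 \subset D$ then amounts to adjoining $t^M\tilde{e}$, the $2$-cell $\tilde{c}$, and any components of $\rho^{-1}(X)$ that appear in $D$ but not in $D_0$; each such extra component can be inserted via its own cellular filtration while preserving local indicability. The final step attaches $t^M\tilde{e}$ together with $\tilde{c}$ in a single elementary reduction. This is a genuine elementary reduction because the attaching map of $\tilde{c}$ traverses $t^M\tilde{e}$ by the maximality of $M$: in the free (or amalgamated) product decomposition of $\pi_1$ of the preceding complex joined to $t^M\tilde{e}$, the attaching word of $\tilde{c}$ has non-trivial syllable length in the generator corresponding to $t^M\tilde{e}$ and so is not freely homotopic into the complex without $t^M\tilde{e}$. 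Applying \cref{inclusion_corollary}(1) across the filtration yields $\pi_1(D_0)\hookrightarrow \pi_1(D)$, and the symmetric argument using $t^m\tilde{e}$ and the minimality of $m$ gives $\pi_1(t\cdot D_0)\hookrightarrow\pi_1(D)$. The HNN splitting in (2) is then immediate from \cref{find_splitting}, with $\psi$ induced by translation by $t$ as stated.

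The main obstacle I anticipate is the combinatorial bookkeeping of the elementary-reduction filtration, particularly in the disconnected case of $X$ where one must track which components of $\rho^{-1}(X)$ lie in $D_0$ versus $D$ and ensure that each enlargement fits the strict single-$1$-cell (plus optional $2$-cell) format of an elementary reduction. This information is exactly what is controlled in the proof of \cref{Z-domain}, and the required filtration should drop out of a careful inspection of that construction together with the extremality of $m$ and $M$.
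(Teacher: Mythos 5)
Your proposal is correct and follows essentially the paper's own route: quote \cref{Z-domain}, observe that $\pi_1(D_0)$ and $\pi_1(t\cdot D_0)$ are free products of fundamental groups of covers of components of $X$ with a free group (hence locally indicable), check that $D_0\subset D$ and $t\cdot D_0\subset D$ are elementary reductions to obtain injectivity from Howie, and conclude via \cref{find_splitting}. The one place you work harder than necessary is in anticipating a multi-step filtration between $D_0$ and $D$: the bound $M-m\geqslant a+b-1$ established in the proof of \cref{Z-domain} already forces $\rho^{-1}(X)\subset D_0$, so there are no ``extra components'' to absorb, $D-D_0$ consists of exactly the single 1-cell $t^M\tilde{e}$ and the single 2-cell $\tilde{c}$, and $D_0\subset D$ (symmetrically $t\cdot D_0\subset D$) is a single elementary reduction on the nose.
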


\begin{proof}
Let $\rho\colon Y\to Z$ and $D\subset Y$ be the $\Z$-cover and $\Z$-domain from \cref{Z-domain}. Since $D_0$ and $t\cdot D_0$ are homotopic to a wedge of copies of covers of components of $X$ and cycles $S^1$, we have that $\pi_1(D_0)$ and $\pi_1(t\cdot D_0)$ split as free products of free groups and subgroups of $\pi_1(X, x)$ for different choices of basepoint $x$. Thus, since $\pi_1(X, x)$ is locally indicable for all choices of basepoint, it follows that $\pi_1(D_0)$ and $\pi_1(t\cdot D_0)$ are locally indicable also. Thus, since $D_0, t\cdot D_0\subset D$ are elementary reductions to $2$-complexes with locally indicable fundamental groups, the homomorphisms $\pi_1(D_{0}), \pi_1(D_{1})\to \pi_1(D)$ induced by inclusion are injections by \cref{inclusion}. We may now apply \cref{find_splitting} to obtain the desired HNN-splitting.
\end{proof}

\subsection{The steps in the hierarchy}

The following useful lemma is due to Howie, see the proof of \cite[Theorem 4.3]{Ho81}.

\begin{lemma}
\label{amalgam}
Let $A$ and $B$ be locally indicable groups and let $w\in A*B$ be a cyclically reduced word of length at least two. If $G = \frac{A*B}{\normal{w}}$, then 
\[
G \isom A\underset{A_0}{*}H\underset{B_0}{*}B
\]
where $A_0$ and $B_0$ are the subgroups of $A$ and $B$ generated by the elements mentioned by $w$ and $H = \frac{A_0*B_0}{\normal{w}}$.
\end{lemma}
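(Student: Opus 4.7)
The plan is to exhibit $G$ and $K := A \underset{A_0}{*} H \underset{B_0}{*} B$ as isomorphic via mutually inverse natural homomorphisms, with essentially all of the content concentrated in a single application of Howie's Freiheitssatz (\cref{inclusion}).

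\textbf{Step 1: Making $K$ well-defined.} An iterated amalgamated product requires the structural maps $A_0 \to H$ and $B_0 \to H$ to be injective. Write $w = v^n$ with $v \in A_0 * B_0$ not a proper power (allowing $n=1$); raising to a power does not introduce new letters, so $A_0$ and $B_0$ remain the subgroups generated by letters mentioned in $v$, and $v$ remains cyclically reduced of length $\geqslant 2$ in $A_0 * B_0$. Since subgroups of locally indicable groups are locally indicable, $A_0$ and $B_0$ are locally indicable, and \cref{inclusion}(1) applied to the one-relator product $H = (A_0*B_0)/\normal{v^n}$ gives that $A_0, B_0 \hookrightarrow H$ are both injective. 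This is the only non-formal input.

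\textbf{Step 2: The map $\Phi \colon K \to G$.} By the universal property of amalgamated products, it suffices to specify compatible homomorphisms $A \to G$, $H \to G$, $B \to G$. Take $A \hookrightarrow A*B \twoheadrightarrow G$ and $B \hookrightarrow A*B \twoheadrightarrow G$ for the outer factors. For the middle factor, the composite $A_0 * B_0 \hookrightarrow A*B \twoheadrightarrow G$ sends $w$ to $1$ and hence factors through $H$, producing $H \to G$. Compatibility of these three maps on $A_0$ (resp.\ $B_0$) is automatic, since both routes to $G$ agree with the composition $A_0 \hookrightarrow A*B \twoheadrightarrow G$ (resp.\ $B_0 \hookrightarrow A*B \twoheadrightarrow G$).

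\textbf{Step 3: The inverse $\Psi \colon G \to K$ and checking mutual inverse.} The inclusions $A, B \hookrightarrow K$ induce a homomorphism $A*B \to K$, and I need to verify $w$ lies in its kernel. Since $w \in A_0 * B_0$ and the amalgamation identifies $A_0 \subseteq A$ with $A_0 \subseteq H$ inside $K$ (and similarly for $B_0$), the image of $w$ under $A*B \to K$ coincides with its image under $A_0*B_0 \hookrightarrow H \hookrightarrow K$, which is trivial because $w=1$ in $H$. Hence $A*B \to K$ descends to $\Psi \colon G \to K$. The equalities $\Phi \circ \Psi = \id_G$ and $\Psi \circ \Phi = \id_K$ then follow by direct evaluation on generators of $A$, $B$, and $H$ (for the latter, using that $A_0 \cup B_0$ generates $H$ and that the $A$- and $H$-copies of $A_0$, as well as the $B$- and $H$-copies of $B_0$, coincide in $K$). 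The only substantive step is Step 1; everything else is a formal manipulation of universal properties.
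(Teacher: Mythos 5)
Your argument is correct. The paper does not supply its own proof of this lemma but defers to the proof of \cite[Theorem 4.3]{Ho81}; your proposal reconstructs exactly the expected argument, with the only substantive input being Howie's Freiheitssatz (\cref{inclusion}(1)) used in Step 1 to make $A_0, B_0 \hookrightarrow H$ injective so that the iterated amalgam is well-defined. Your reduction via $w = v^n$ with $v$ not a proper power is necessary, since \cref{inclusion} carries that hypothesis, and you correctly observe that the subgroups $A_0$, $B_0$ generated by the letters of $w$ coincide with those generated by the letters of $v$, and that $v$ remains cyclically reduced of length at least two in $A_0 * B_0$. Steps 2 and 3 are routine universal-property bookkeeping and are carried out correctly. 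One cosmetic slip: in Step 3 the map $A_0 * B_0 \to H$ is a quotient, not an inclusion, so writing $\hookrightarrow$ there is misleading; this has no bearing on the validity of the argument, which only needs that the composite $A_0*B_0 \to H \to K$ kills $w$ and agrees on $A_0$ and $B_0$ with the map $A*B \to K$.
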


We now translate \cref{HNN_one-relator} to the language of one-relator products.

\begin{theorem}
\label{HNN}
Let $A$ and $B$ be locally indicable groups, let $w \in A*B$ be a cyclically reduced word of length at least two and let $n\geqslant 1$ be an integer. If the letters appearing in $w$ generate $A*B$, then, denoting by $G = \frac{A*B}{\normal{w^n}}$, there is a HNN-splitting $G = H*_{\psi}$ where 
\[
H \isom \frac{J_0*\Z}{\normal{u^n}} \isom \frac{\Z*J_1}{\normal{v^n}}
\]
and where $\psi\colon J_0\to J_1$ identifies $J_0$ with $J_1$. Moreover, the following holds:
\begin{enumerate}
\item $J_0, J_1$ split as free products of subgroups of $A$ and $B$ and a free group.
\item The length of $u$ and $v$ as cyclically reduced words over $J_0*\Z$ and $\Z*J_1$ respectively are strictly less than that of $w$.
\end{enumerate}
\end{theorem}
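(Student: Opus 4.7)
The plan is to realise $G$ as $\pi_1(Z)$ of a $2$-complex to which \cref{HNN_one-relator} applies, and then read off the claimed one-relator product structure of $H = \pi_1(D)$ directly from the graph-of-spaces description of the $\Z$-domain $D$.

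I would construct $Z$ by letting $X_A$ (resp.\ $X_B$) be a presentation $2$-complex for $A$ (resp.\ $B$) with a single $0$-cell and one $1$-cell per letter of $w$ lying in the corresponding factor, then setting $X = X_A \sqcup X_B$, joining the two $0$-cells by an edge $e$, and attaching a $2$-cell $c$ along the loop $w^n$ realised in $X \cup e$. Then $\pi_1(Z) = G$, the inclusion $X \subset Z$ is an elementary reduction with $Z - X = e \cup c$, the fundamental group of each component of $X$ is locally indicable, and the attaching map of $c$ is surjective on $Z^{(1)}$ (the letters of $w$ generate $A*B$, and $e$ is traversed at every factor switch) and not freely homotopic into $X$ (because $w$ is cyclically reduced of length at least two). \cref{HNN_one-relator} then provides a $\Z$-cover $\rho \colon Y \to Z$, a $\Z$-domain $D \subset Y$, and an HNN splitting $G \cong \pi_1(D) *_\psi$ with $\psi \colon \pi_1(D_0) \to \pi_1(t \cdot D_0)$ induced by the deck generator $t$.

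Set $H = \pi_1(D)$, $J_0 = \pi_1(D_0)$, $J_1 = \pi_1(t\cdot D_0)$, and $D^- = D \setminus \tilde c$, where $\tilde c$ is the lift of $c$ in $D$. Writing $e_m, \ldots, e_M$ for the $t$-translates of $\tilde e$ appearing in $D$, the structural observation is that $D^- = D_0 \cup e_M = D_1 \cup e_m$ (with $D_1 = t\cdot D_0$), so a graph-of-spaces Seifert--van Kampen computation gives $\pi_1(D^-) \cong J_0 * \Z \cong \Z * J_1$, with each $\Z$-factor freely generated by a loop making a single traversal of the respective extremal edge. The $2$-cell $\tilde c$ is attached along $\tilde w^n$, which is a loop in $D^-$ because $\phi(w) = 0$ (as $w^n \in \ker\phi$ and $\Z$ is torsion-free), and therefore
\[
H = \pi_1(D^-)/\normal{\tilde w^n} \cong \frac{J_0 * \Z}{\normal{u^n}} \cong \frac{\Z * J_1}{\normal{v^n}},
\]
where $u, v$ are $\tilde w$ expressed in the respective free products. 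Conclusion~(1) follows because $D_0$ is a graph of spaces whose vertex groups are fundamental groups of connected covers of components of $X$ (so subgroups of $A$ or $B$) and whose edge groups are trivial, and the free-group factor arises from non-tree edges in the underlying graph.

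The main obstacle I expect is conclusion~(2), the strict inequality $|u|, |v| < |w|$. With $p_i = \phi(w_1 \cdots w_i)$, the lift $\tilde w$ crosses $t^{p_i}\tilde e$ after each letter $w_i$, so each crossing of $e_M$ contributes a single $s^{\pm 1}$-letter to $u$ in $J_0 * \Z$ while the remaining portions lie in $J_0$. The length bound thus reduces to a combinatorial argument about the partial sums $p_i$, using crucially that $M - m \geq a + b - 1$ from \cref{number_lemma} (as applied in \cref{Z-domain}); the delicate point is controlling which interleaved $J_0$-blocks survive cyclic reduction, so that the resulting alternating length comes out strictly less than $|w|$.
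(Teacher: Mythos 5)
Your construction of $Z$, the reduction to \cref{HNN_one-relator}, the identification of $H$, $J_0$, $J_1$, and the derivation of conclusion~(1) via Kurosh all match the paper's proof. The gap is in conclusion~(2), which you correctly identify as the delicate part but leave incomplete, and here you are reaching for a far more elaborate argument than is needed.

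You want to invoke $M - m \geq a + b - 1$ from \cref{number_lemma} and worry about which interleaved $J_0$-blocks survive cyclic reduction. Neither is necessary. The only input needed is $m < M$, and that follows immediately from the hypothesis that the letters of $w$ generate $A*B$: if $m = M$, every letter of $w$ would lie in $\ker\phi$, contradicting surjectivity of $\phi\colon G \to \Z$. Granting $m < M$, the length bound is a direct count. The cyclically reduced length of $u$ over $J_0 * \Z$ is controlled by the number of times $\tilde\lambda$ traverses $e_M$ (each traversal contributes a single $\Z$-letter, with $J_0$-segments in between); cyclic reduction can only shorten this, so you need no control over which $J_0$-blocks survive. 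Likewise the length of $v$ is controlled by the traversals of $e_m$. On the other hand, the length of $w$ is governed by the total number of times $\lambda$ traverses $e$, which is the sum of the traversals of $e_i$ over all $m \leq i \leq M$. Since $m < M$ and both $e_m$ and $e_M$ are traversed at least once by definition, the traversals of $e_M$ (resp.\ $e_m$) form a proper subset of all traversals, and the strict inequalities $|u| < |w|$ and $|v| < |w|$ follow at once. The appeal to \cref{number_lemma} is only used inside \cref{Z-domain} to establish connectivity of the $\Z$-domain; it plays no role in the length estimate.
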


\begin{proof}
We shall assume that $n = 1$, the argument is unchanged for $n\geqslant 2$. Let $X_A$ be any $2$-complex with $\pi_1(X_A) = A$, $X_B$ be any $2$-complex with $\pi_1(X_B) = B$ and denote by $X = X_A\sqcup X_B$. Then $Z$ is obtained by attaching a 1-cell $e$ connecting $X_A$ to $X_B$ and a $2$-cell whose attaching map spells out the word $w$. Then $\pi_1(Z) = G$ and $X\subset Z$ is an elementary reduction. Since the letters appearing in $w$ generate $A*B$, we may assume that the attaching map $\lambda\colon S^1\to Z^{(1)}$ is surjective. Applying \cref{HNN_one-relator}, we find that there is an infinite cyclic cover $\rho\colon Y\to Z$ and a subcomplex $D\subset Y$ such that $G\isom \pi_1(D)*_{\psi}$, where $\psi\colon \pi_1(D_0)\to\pi_1(t\cdot D_0)$ is the isomorphism induced by translation by a generator of the deck group $\deck(\rho) = \langle t\rangle$, $t^{-1}\cdot D\cap D  = D_0 \to t\cdot D_0 = D\cap t\cdot D$. Moreover, $D$ supports a single lift $\tilde{\lambda}$ of $\lambda$, $D_0$ is connected and $D_0\subset D$, $t\cdot D_0\subset D$ are elementary reductions. Hence, we have
\[
\pi_1(D) = \frac{\pi_1(D_0) * \Z}{\normal{u}} = \frac{\Z * \pi_1(t\cdot D_0)}{\normal{v}}
\]
where $u$ and $v$ are the words spelled out by this single lift of $\lambda$. Since $\pi_1(D_0), \pi_1(t\cdot D_0)$ are subgroups of $A*B$, they each split as free products of subgroups of $A$ and $B$ and a free group by the Kurosh subgroup theorem. 

All that remains to be shown is that the lengths of $u$ and $v$ are strictly smaller than that of $w$. We first claim that $m<M$.  If $m = M$, then this would imply that every letter appearing in $w$ from $A$ and every letter appearing in $w$ from $B$ lies in the kernel of the epimorphism $G\to \Z$ induced by the infinite cyclic cover. However, seeing as these letters actually generate $G$ by assumption, this is a not possible and so $m<M$ as claimed.

The length of $u$ is precisely twice the number of times that $\tilde{\lambda}$ traverses $e_M$, whereas the length of $v$ is precisely twice the number of times that $\tilde{\lambda}$ traverses $e_m$. On the other hand, the length of $w$ is twice the number of times $\lambda$ traverses $e$ and hence twice the sum of the number of times $\tilde{\lambda}$ traverses each $e_i$ for $m\leqslant i\leqslant M$. Hence, since $m<M$, the lengths of $u$ and $v$ are both strictly smaller than that of $w$.
\end{proof}

\begin{remark}
\label{relator_form}
We actually proved something stronger on the form of the word $u$ from \cref{HNN}. If $w = a_1b_1\ldots a_nb_n$ and $H = J_0*\langle t\rangle$, then there are non-trivial elements $c_1, \ldots, c_{k+1}\in A*B$ so that $w = c_1c_2\ldots c_{k+1}$ with $k<n$ and there are words $t_{-1}, t_{+1}\in A*B$ such that 
\begin{align*}
t &= t_{-1}t_{+1}^{-1},\\
x_1 &= c_1t_{-\epsilon_1}\in J_0,\\
x_i &= t_{\epsilon_{i-1}}^{-1}c_it_{-\epsilon_i}\in J_0 \quad  \text{for } 1< i< k,\\
x_{k+1} &= t_{-\epsilon_{k}}^{-1}c_{k+1}\in J_0
\end{align*}
and such that
\[
u = x_1t^{\epsilon_1}\ldots x_kt^{\epsilon_k}x_{k+1}
\]
as a word over $J_0*\langle t\rangle$, where $\epsilon_i\in \{\pm1\}$. As an element of $A*B$, $u$ is equal to $w$. Using the notation from the proof of \cref{HNN}, the elements $t_{-1}, t_{+1}\in A*B$ correspond to paths in $D_0$ connecting the basepoint with the origin of $e_M$ and the target of $e_M$ respectively. Then each $c_i$ corresponds to a maximal subword of $w$ whose associated path is supported in $D_0$. Since they are maximal, they correspond to subpaths of the path corresponding to $w$ such that immediately before and immediately after, the edge $e_M$ is traversed.

The same can be said about $v$.
\end{remark}

Combining \cref{amalgam} and \cref{HNN} and induction, we obtain \cref{hierarchy}.

\section{Normal $R$-perfect subgroups of one-relator products}

The aim of this section is to understand perfect normal subgroups of one-relator products. To this end, we prove the following theorem.

\begin{theorem}
\label{perfect_subgroup}
Let $R$ be a ring, $k\geqslant 1$ an integer, $A$ and $B$ locally indicable groups, let $w\in A*B$ be a cyclically reduced word of length at least two and consider the one-relator product $G = \frac{A*B}{\normal{w^k}}$. Let $K\trianglelefteq G$ be a normal subgroup such that $K\cap A = 1 = K\cap B$ and, if $\phi\colon G\to G/K = Q$ denotes the quotient map, such that $RQ$ is a domain and $\phi(u)\neq 1$ for all proper non-empty subwords $u$ of $w$. If $H_1(K, R) = 0$, then $k$ is a unit in $R$ and $K = \normal{w}$.
\end{theorem}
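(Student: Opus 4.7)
My plan is to analyze the cellular chain complex of a cover of the 2-complex realizing $G$. Let $X_A, X_B$ be aspherical 2-complexes with fundamental groups $A, B$ joined by a 1-cell $e$ to form $X = X_A \cup e \cup X_B$, and attach a 2-cell $c$ along $w^k$ to obtain $Z$ with $\pi_1(Z) = G$. Let $p\colon Y \to Z$ be the cover with $\pi_1(Y) = K$ and deck group $Q$. Since $K \cap A = K \cap B = 1$, each component of $p^{-1}(X_A)$ and $p^{-1}(X_B)$ is a simply connected universal cover, so $H_i(p^{-1}(X), R) = 0$ for $i \geq 1$ and $H_0(p^{-1}(X), R) = R[Q/A] \oplus R[Q/B]$. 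The relative long exact sequence for the pair $(Y, p^{-1}(X))$, combined with $H_1(Y, R) = H_1(K, R) = 0$, collapses to the short exact sequence of $RQ$-modules
\[
0 \to RQ/(RQ \cdot r) \to R[Q/A] \oplus R[Q/B] \to R \to 0
\]
when $r \neq 0$, or the analogous sequence with $RQ$ in place of $RQ/(RQ \cdot r)$ when $r = 0$. Here $r \in RQ$ is the coefficient of a chosen lift $[\tilde{e}]$ in $\partial_2[\tilde{c}]$; writing $w = a_1 b_1 \cdots a_n b_n$, a direct cellular computation gives
\[
r = \Bigl(\sum_{j=0}^{k-1}\phi(w)^j\Bigr)\, s, \qquad s = \sum_{i=1}^n \phi(a_1 b_1 \cdots a_i)\bigl(1 - \phi(b_i)\bigr).
\]

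The first key step is to rule out $r = 0$. Since $RQ$ is a domain, $r = 0$ occurs precisely when either $\phi(w) \neq 1$ has finite order $m \geq 2$ (so that $(1-\phi(w))\sum_{j=0}^{m-1}\phi(w)^j = 1-\phi(w)^m = 0$ forces $\sum_{j=0}^{m-1}\phi(w)^j = 0$, hence $r = 0$), or $\phi(w) = 1$ with $k = 0$ in $R$. In either case, $\phi(w^m) = 1$ in $Q$ for some $m \geq 1$, and the telescoping Fox identity
\[
w^m - 1 = \alpha + \beta, \qquad \alpha \in RQ \cdot I_A, \ \beta \in RQ \cdot I_B,
\]
becomes $\alpha = -\beta$ upon mapping to $RQ$, exhibiting an element of $RQ \cdot I_A \cap RQ \cdot I_B = \ker\bigl(RQ \to R[Q/A] \oplus R[Q/B]\bigr)$. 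The non-triviality of $\alpha$ follows from the distinctness in $Q$ of the prefixes of $w^m$, which in turn uses the hypothesis that $\phi(u) \neq 1$ for all proper non-empty subwords $u$ of $w$; this is the main technical obstacle, requiring careful combinatorial bookkeeping (in particular to handle prefixes that span $w$-boundaries). The resulting non-injectivity contradicts the $r = 0$ version of the above sequence.

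Having established $r \neq 0$, so $\phi(w) = 1$, $k \neq 0$ in $R$, and $r = ks$, we obtain $\normal{w} \subseteq K$. The inclusion $RQ \cdot ks \subseteq RQ \cdot s$ gives a natural surjection $RQ/(RQ \cdot ks) \twoheadrightarrow RQ/(RQ \cdot s)$, and the composite $RQ/(RQ \cdot ks) \twoheadrightarrow RQ/(RQ \cdot s) \hookrightarrow R[Q/A] \oplus R[Q/B]$ is the isomorphism onto the kernel of the augmentation from the $Q$-sequence; hence the surjection must be injective, so $RQ \cdot s = RQ \cdot ks$. Since $RQ$ is a domain and $s \neq 0$, this forces $k \in R^\times$. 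To conclude $K = \normal{w}$, equivalently $M := K/\normal{w} = 1$, I would apply the theorem recursively to $P = (A\ast B)/\normal{w}$ (locally indicable by \cref{inclusion}) with normal subgroup $M$, inducting on the length of $w$ via the hierarchy of \cref{sec:hierarchies}. The hypotheses transfer directly ($M \cap A, M \cap B = 1$; $R(P/M) = RQ$ is a domain; proper subwords remain nontrivial in $Q$; and $H_1(M, R) = 0$ since $H_1(K,R) \twoheadrightarrow H_1(M,R)$ by right-exactness of $R \otimes (\cdot)^{\mathrm{ab}}$), and since $\normal{w}_P = 1$ (as $w = 1$ in $P$), the recursive conclusion yields $M = 1$. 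The base case $|w| = 2$ (so $P = A \ast_C B$) is handled directly: $M$ acts freely on the Bass--Serre tree (as its intersections with vertex and edge stabilizers vanish), hence $M$ is a free group, and $H_1(M, R) = R^{\rk(M)} = 0$ forces $M = 1$.
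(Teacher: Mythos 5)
Your argument splits into two halves, and they have quite different statuses.

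For the first half — establishing $\normal{w} \subseteq K$ and $k \in R^{\times}$ — you take a genuinely different route from the paper. You extract a short exact sequence $0 \to RQ/(RQ\cdot r) \to R[Q/\phi(A)]\oplus R[Q/\phi(B)] \to R \to 0$ from the relative long exact sequence of the pair $(Y,p^{-1}(X))$ and then argue module-theoretically, whereas the paper works via the injectivity statement of \cref{homology_injection} and never isolates $r$ at all. Your route is clean, but the ``main technical obstacle'' you flag (controlling prefixes of $w^{m}$ across $w$-boundaries, when $\phi(w)$ has finite order $m\geq 2$) does not actually arise: since $RQ$ is a domain, $Q$ is torsion-free, and $\phi(w)^{k}=1$ then forces $\phi(w)=1$, so $m=1$ always. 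Once you note this, the element $s$ has support the $2n$ distinct classes $\phi(p_{1}),\dots,\phi(p_{2n})$ (distinctness exactly from the subword hypothesis), so $s\neq 0$, $r = ks$, and $r=0$ iff $k=0$ in $R$; the resulting $0\to RQ\to R[Q/\phi(A)]\oplus R[Q/\phi(B)]\to R\to 0$ is then contradicted by $s\in RQ\cdot I_{\phi(A)}\cap RQ\cdot I_{\phi(B)}$. Your subsequent deduction $RQ\cdot s = RQ\cdot ks$, and hence $k\in R^{\times}$, is also correct (the key point, which you should spell out, is that $s\in RQ\cdot I_{\phi(A)}\cap RQ\cdot I_{\phi(B)}$ and the SES identifies this intersection with $RQ\cdot ks$). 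So this half is essentially sound, just needs the torsion-freeness observation.

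The second half — going from $\normal{w}\subseteq K$ to $K=\normal{w}$ — is where the real gap lies. You propose to apply the theorem recursively to $P=(A*B)/\normal{w}$ with normal subgroup $M=K/\normal{w}$, ``inducting on the length of $w$ via the hierarchy of \cref{sec:hierarchies}''. But passing from $(G,K,w,k)$ to $(P,M,w,1)$ leaves the length of $w$ unchanged, so there is no induction step here; invoking the theorem for $(P,M,w,1)$ is circular. The hierarchy of \cref{sec:hierarchies} is what actually reduces the length: one passes to $G_{0}$ via \cref{amalgam}, splits $G_{0}$ as an HNN-extension $H*_{\psi}$ via \cref{HNN} with a strictly shorter relator, shows (using \cref{homology_injection} and \cref{tree_injection}, together with the tree-ness of the quotient graph of groups from $H_{1}(K_{0},R)=0$) that the edge groups of the induced decomposition of $K_{0}$ are trivial, and only then applies the inductive hypothesis to $K_{0}\cap H\trianglelefteq H$. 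None of this is in your write-up; you correctly handle the eventual base case $|w|=2$ via a free action on the Bass--Serre tree, but the inductive mechanism that reaches that base case is the crux of the proof and cannot be cited away. Until that machinery is supplied, $K=\normal{w}$ does not follow.
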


Using \cref{perfect_subgroup}, we may deduce a criterion for when an element is a lift of a generator of a (relative) cyclic relation module.

\begin{theorem}
\label{lift_criterion}
Let $R$ be a ring, let $A$ and $B$ be locally indicable groups and let $N\triangleleft A*B$ be a normal subgroup such that $N\cap A = 1 = N\cap B$ and such that $RQ$ is a domain, where $Q = \frac{A*B}{N}$. Suppose that $N_{R} = (r)$ for some $r\in N_{R}$ and let $w\in N$ be a cyclically reduced element such that $1\otimes w[N,N] = r$. Then $N = \normal{w}$ if and only if no proper non-empty subword of $w$ lies in $N$.
\end{theorem}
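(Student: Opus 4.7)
The plan is to establish the two directions separately. The ``only if'' direction is an immediate consequence of \cref{inclusion}. Assuming $N=\normal{w}$, one may discard the trivial cases where $w$ has length at most one, since then $w\in A\cup B$ forces $w\in N\cap A=1$ or $w\in N\cap B=1$, so $w=1$ and the statement is vacuous. Otherwise $w$ is cyclically reduced of length at least two, so writing $w=v^k$ with $v$ not a proper power, the element $v$ must itself be cyclically reduced of length at least two. Then \cref{inclusion} applied to $v$ and $k$ gives that no proper non-empty subword of $v^k=w$ is trivial in $Q=(A*B)/\normal{v^k}=(A*B)/N$, i.e., no such subword lies in $N$.

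For the ``if'' direction, assume no proper non-empty subword of $w$ lies in $N$, and again reduce to the case where $w$ is cyclically reduced of length at least two. Set $P=\normal{w}\trianglelefteq A*B$, $G=(A*B)/P$, and $K=N/P\trianglelefteq G$. The strategy is to apply \cref{perfect_subgroup} to the one-relator product $G=(A*B)/\normal{w^1}$ and the normal subgroup $K$, whose conclusion $K=\normal{w}$ is equivalent to $K=1$ in $G$ (since $w$ is trivial in $G$), and hence to $N=P=\normal{w}$ inside $A*B$. The hypotheses $K\cap A=1=K\cap B$ follow from \cref{inclusion}(1): since $A,B\hookrightarrow G$ are injective, $K\cap A=\ker(A\to Q)=A\cap N=1$, and similarly for $B$. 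The conditions that $RQ$ is a domain and that $\phi(u)\neq 1$ for every proper non-empty subword $u$ of $w$ are, respectively, a hypothesis and the standing assumption.

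The main obstacle is verifying $H_1(K,R)=0$. For this, I would use that since $w$ normally generates $P$, the $R$-relation module $P_R$ is generated as an $RG$-module by $1\otimes w[P,P]$ (as in \cref{generating_lemma}). The short exact sequence $1\to P\to N\to K\to 1$ yields a right exact sequence $P_{\ab}\to N_{\ab}\to K_{\ab}\to 0$ of abelian groups, which upon tensoring with $R$ gives
\[
P_R \longrightarrow N_R \longrightarrow R\otimes K_{\ab} \longrightarrow 0.
\]
The image of the first map contains $1\otimes w[N,N]=r$, and since $K\subseteq N$ acts trivially on $N_{\ab}$, this image is an $RQ$-submodule of $N_R$ containing the $RQ$-generator $r$. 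Hence the map is surjective, so $R\otimes K_{\ab}=H_1(K,R)=0$, and \cref{perfect_subgroup} applies to conclude the proof.
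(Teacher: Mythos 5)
Your proof is correct and follows essentially the same route as the paper: the forward direction via \cref{inclusion}, and the reverse by checking $H_1(K,R)=0$ and feeding $K=N/\normal{w}$ into \cref{perfect_subgroup}. The only difference is that you establish $H_1(K,R)=0$ directly from the five-term exact sequence for $1\to\normal{w}\to N\to K\to 1$ rather than invoking \cref{generating_lemma}, which is in fact cleaner here since that lemma is stated for a free ambient group while $A*B$ need not be free.
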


\begin{proof}
If $N = \normal{w}$, then no proper non-empty subword of $w$ lies in $N$ by \cref{inclusion}. If no proper non-empty subword of $w$ lies in $N$, then we may combine \cref{generating_lemma} with \cref{perfect_subgroup} to deduce that $N = \normal{w}$.
\end{proof}

Before moving onto the proof of \cref{perfect_subgroup}, we need to prove a couple of useful lemmas.

\begin{lemma}
\label{homology_injection}
Let $R$ be a ring, $k\geqslant 1$ an integer, $A$ and $B$ groups, let $w = a_1b_1\ldots a_nb_n\in A*B$ be a cyclically reduced word and consider the one-relator product $G = \frac{A*B}{\normal{w^k}}$. Let $K\trianglelefteq G$ be a normal subgroup such that, if $\phi\colon G\to G/K = Q$ denotes the quotient map, then
\[
\Ann_{RQ}\left(\left(\sum_{i=0}^{k-1}\phi(w^i)\right)\cdot\left(\sum_{i=1}^n \phi(a_1b_1\ldots a_i) - \phi(a_1b_1\ldots a_ib_i)\right)\right) = \Ann_{RQ}\left(\sum_{i=0}^{k-1}\phi(w^i)\right)
\]
In particular, this holds if $RQ$ is a domain and $\phi(u)\neq 1$ for all proper non-empty subwords $u$ of $w$. Then the following map
    \[
    \left(\bigoplus_{AhK}H_1(K\cap A^h, R)\right)\oplus\left(\bigoplus_{BhK}H_1(K\cap B^h, R)\right)\to H_1(K, R)
    \]
induced by inclusion is injective.
\end{lemma}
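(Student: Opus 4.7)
The plan is to realize $G$ as $\pi_1(X)$ for a $2$-complex $X = Y \cup_{w^k} D^2$, where $Y = Y_A \vee Y_B$ is the wedge of Eilenberg--MacLane spaces $K(A,1)$ and $K(B,1)$, so that $\pi_1(Y) = A*B$. Let $\pi\colon \widetilde{X}_K \to X$ denote the covering corresponding to $K$ (so $\pi_1(\widetilde{X}_K) = K$ and the deck group is $Q = G/K$), and set $\widetilde{Y}_K = \pi^{-1}(Y)$. Since $\widetilde{X}_K$ is obtained from $\widetilde{Y}_K$ by attaching one $2$-cell per element of $Q$, the long exact sequence of the pair $(\widetilde{X}_K,\widetilde{Y}_K)$ collapses to an exact sequence of $RQ$-modules
\[
RQ \xrightarrow{\;\delta\;} H_1(\widetilde{Y}_K, R) \to H_1(K, R) \to 0,
\]
where $\delta(1) = [\tilde{w}^k]$ is the class of the lift of $w^k$ starting at the basepoint.

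Next I would use the decomposition $\widetilde{Y}_K = \pi^{-1}(Y_A) \cup \pi^{-1}(Y_B)$ with intersection $\pi^{-1}(x_0) \cong Q$. The components of $\pi^{-1}(Y_A)$ are indexed by double cosets $AhK$, each being a $K(K \cap A^h, 1)$, with the analogous description for $\pi^{-1}(Y_B)$. The Mayer--Vietoris sequence then supplies an exact sequence of $RQ$-modules
\[
0 \to \bigoplus_{AhK} H_1(K\cap A^h, R) \oplus \bigoplus_{BhK} H_1(K\cap B^h, R) \to H_1(\widetilde{Y}_K, R) \xrightarrow{\partial^{\mathrm{MV}}} RQ,
\]
in which the first arrow is precisely the inclusion-induced map of the lemma composed with the inclusion into $H_1(\widetilde{Y}_K, R)$. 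It therefore suffices to show that $\Ima(\delta) \cap \ker(\partial^{\mathrm{MV}}) = 0$, or equivalently that any $r \in RQ$ with $\partial^{\mathrm{MV}}(r \cdot [\tilde{w}^k]) = 0$ must satisfy $r\cdot[\tilde{w}^k] = 0$.

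The computational heart of the argument is an explicit evaluation of $\partial^{\mathrm{MV}}([\tilde{w}^k])$. I would split $\tilde{w}^k = \tilde{w}^k_A + \tilde{w}^k_B$, where the two summands collect the lifts of the $a$- and $b$-letters of $w^k$ respectively, and use that the lift of $a_i$ in the $j$-th copy of $w$ starts at $\phi(w^j a_1 b_1\cdots a_{i-1}b_{i-1})\tilde{x}_0$. A direct telescoping calculation, in which the contribution $\sum_{j=0}^{k-1}\phi(w^j)(\phi(w)-1) = \phi(w^k) - 1 = 0$ cancels, yields
\[
\partial^{\mathrm{MV}}([\tilde{w}^k]) = \left(\sum_{j=0}^{k-1}\phi(w^j)\right)\left(\sum_{i=1}^n \phi(a_1 b_1 \cdots a_i) - \phi(a_1 b_1 \cdots a_i b_i)\right).
\]
By $RQ$-linearity of $\partial^{\mathrm{MV}}$ and the annihilator hypothesis, this forces every $r$ with $\partial^{\mathrm{MV}}(r\cdot [\tilde{w}^k]) = 0$ to satisfy $r\cdot\sum_{j=0}^{k-1}\phi(w^j) = 0$ in $RQ$.

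To conclude, I would invoke the chain-level identity
\[
\tilde{w}^k = \left(\sum_{j=0}^{k-1}\phi(w^j)\right)\cdot\tilde{w} \quad \text{in } C_1(\widetilde{Y}_K, R),
\]
where $\tilde{w}$ is the lift of $w$ based at $\tilde{x}_0$. Hence if $r\cdot\sum_{j=0}^{k-1}\phi(w^j) = 0$, then $r\cdot\tilde{w}^k = 0$ already as a $1$-chain, so $r\cdot[\tilde{w}^k] = 0$ in homology and the lemma follows. I expect the main technical obstacle to be carrying out the telescoping computation of $\partial^{\mathrm{MV}}([\tilde{w}^k])$ cleanly, especially verifying the cancellation produced by $\phi(w^k) = 1$; once this is done, the reduction to the annihilator hypothesis is formal.
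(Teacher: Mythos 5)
Your proposal is correct and follows essentially the same approach as the paper's proof, with only a cosmetic difference in the topological model: you build $X$ from a wedge $Y_A\vee Y_B$ of aspherical spaces and attach a single $2$-cell, while the paper uses $X_A\sqcup X_B$ joined by an extra $1$-cell $e$ and reads the key coefficient off the $[\tilde e]$-component of $\partial_2[\tilde c]$ rather than off the Mayer--Vietoris boundary. In both versions the heart of the argument is the same telescoping computation giving $\bigl(\sum_{j=0}^{k-1}\phi(w^j)\bigr)\bigl(\sum_{i=1}^n\phi(a_1b_1\cdots a_i)-\phi(a_1b_1\cdots a_ib_i)\bigr)$ together with the annihilator hypothesis and the factorisation of the attaching chain through $\sum_j\phi(w^j)$.
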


\begin{proof}
Let $X_A$ be a $2$-complex with $\pi_1(X_A) = A$ and let $X_B$ be a $2$-complex with $\pi_1(X_B) = B$. Let $X = X_A\sqcup X_B$ and let $Z$ be the $2$-complex obtained by attaching a 1-cell $e$ to $X_A\sqcup X_B$, with one end in $X_A$ and the other in $X_B$ and a $2$-cell $c$ along a boundary path spelling out the word $w^k$. We have $\pi_1(Z) = G$ by construction. Let $p\colon Y\to Z$ be the cover with $\pi_1(Y) = K$ and hence $\deck(p) = Q$. Denote by $p^{-1}(X) = \overline{Y}$. Let $\tilde{e}\subset Y$ be a lift of $e$ and. Let $\tilde{c}$ be a lift of $c$ and let $\tilde{e}\subset Y$ be the lift of $e$ lying in the boundary of $\tilde{c}$ between the subpath corresponding to $b_n$ and the subpath corresponding to $a_1$. Then we have $C_1(Y, R) \isom C_1(\overline{Y}, R)\oplus RQ\cdot [\tilde{e}]$ and $C_2(Y, R) \isom C_2(\overline{Y}, R)\oplus RQ\cdot [\tilde{c}]$. 

A simple computation shows that there is some $f\in C_1(\overline{Y}, R)$ such that
\[
\partial_2([\tilde{c}]) = \left(\sum_{i=0}^{k-1}\phi(w^i)\right)\cdot \left(f + \left(\sum_{i=1}^n \phi(a_1b_1\ldots a_i) - \phi(a_1b_1\ldots a_ib_i)\right)\cdot [\tilde{e}]\right)
\]
 By assumption, we have that 
\[
\Ann_{RQ}\left(\left(\sum_{i=0}^{k-1}\phi(w^i)\right)\left(\sum_{i=1}^n \phi(a_1b_1\ldots a_i) - \phi(a_1b_1\ldots a_ib_i)\right)\right) = \Ann_{RQ}(\partial_2([\widetilde{c}]).
\]
If $m\in C_2(Y, R)$, then we may write $m = m' + r\cdot [\widetilde{c}]$ for some $m'\in C_2(\overline{Y}, R)$ and some $r\in RQ$. Then $\partial_2(m) = \partial_2(m') + r\cdot \partial_2([\widetilde{c}])$ and so $\partial_2(m)\in C_1(\overline{Y}, R)$ if and only if $r\in \Ann_{RQ}(\partial_2([\widetilde{c}]))$ by the above. In particular, $\Ima(\partial_2)\cap C_1(\overline{Y}, R) = \partial_2(C_2(\overline{Y}, R))$ and so 
\[
\frac{Z_1(\overline{Y}, R)}{\partial_2(C_2(\overline{Y}, R))} = \frac{\ker(\partial_1)\cap C_1(\overline{Y}, R)}{\Ima(\partial_2)\cap C_1(\overline{Y}, R)}\leqslant H_1(K, R).
\]
Denoting by $\overline{Y} = p^{1}(X_A) \sqcup p^{-1}(X_B) = \overline{Y}_A\sqcup\overline{Y}_B$, we have
\begin{align*}
\frac{Z_1(\overline{Y}_A, R)}{\partial_2(C_2(\overline{Y}_A, R))} &= \bigoplus_{AhK}H_1(K\cap A^h, R)\\
\frac{Z_1(\overline{Y}_B, R)}{\partial_2(C_2(\overline{Y}_B, R))} &= \bigoplus_{BhK}H_1(K\cap B^h, R)
\end{align*}
and so the result holds.
\end{proof}

\begin{lemma}
\label{tree_injection}
Let $\mathbb{T}$ be a graph of groups with underlying graph a tree $T$ and with $G = \pi_1(\mathbb{T})$. Let $R$ be a ring, $M$ a left $RG$-module and suppose that for all edges $e\subset T$, the edge group inclusion $T_e\injects T_v$ induce an injection in homology $H_1(T_e, M)\injects H_1(T_v, M)$. Then the map $H_1(T_v, M)\to H_1(G, M)$ induced by inclusion is also injective for all vertices $v\in T$.
\end{lemma}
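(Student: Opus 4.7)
The plan is to reduce to the finite-tree case via a direct limit argument, and then to induct on the number of edges using the Mayer--Vietoris sequence for amalgamated free products.

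For the reduction, note that $G = \pi_1(\mathbb{T})$ is the directed union of the fundamental groups $G_S = \pi_1(\mathbb{T}_S)$ as $S$ ranges over finite connected subtrees of $T$ containing the chosen vertex $v$. Group homology commutes with directed colimits of groups (with the coefficient module restricted appropriately), so $H_1(G, M) = \varinjlim_S H_1(G_S, M)$, and an element of $H_1(T_v, M)$ that vanishes in the colimit must already vanish at some finite stage. It therefore suffices to establish the result when $T$ is finite.

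Proceed by induction on the number of edges. The base case of no edges is trivial, as then $G = T_v$. For the inductive step, choose a leaf vertex $v_0$ of $T$ and let $e$ be the unique edge incident to $v_0$, with other endpoint $v_1$. Removing $v_0$ and $e$ leaves a subtree $T'$ carrying a graph of groups $\mathbb{T}'$ with fundamental group $G'$, and Bass--Serre theory gives $G \cong G' \ast_{T_e} T_{v_0}$. The inductive hypothesis applied to $\mathbb{T}'$ yields injectivity of $H_1(T_w, M) \injects H_1(G', M)$ for every $w \in T'$. In particular, composing with the given injection $H_1(T_e, M) \injects H_1(T_{v_1}, M)$ shows that the edge-group map $H_1(T_e, M) \injects H_1(G', M)$ is injective as well.

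Now apply the Mayer--Vietoris sequence for the amalgamated free product $G = G' \ast_{T_e} T_{v_0}$ with coefficients in $M$:
$$H_1(T_e, M) \xrightarrow{(\iota_{v_0},\, -\iota_{G'})} H_1(T_{v_0}, M) \oplus H_1(G', M) \longrightarrow H_1(G, M).$$
If $x \in H_1(G', M)$ maps to $0$ in $H_1(G, M)$, then $(0, x) = (\iota_{v_0}(c), -\iota_{G'}(c))$ for some $c \in H_1(T_e, M)$. Since $\iota_{v_0}$ is injective by hypothesis, $c = 0$ and hence $x = 0$. The symmetric argument, using injectivity of $\iota_{G'}$ established above, gives $H_1(T_{v_0}, M) \injects H_1(G, M)$. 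Composing with the inductive injections for vertices $w \in T'$ then yields the desired injectivity for every vertex of $T$.

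The only real subtlety is bookkeeping: to invoke Mayer--Vietoris at the inductive step one needs \emph{both} edge-group inclusions of the amalgam to be injective on $H_1(-,M)$, and these are supplied respectively by the hypothesis of the lemma (at the leaf side) and by the induction composed with the hypothesis (at the $G'$ side). Beyond this, the argument is a routine combination of Mayer--Vietoris and the commutation of group homology with directed colimits.
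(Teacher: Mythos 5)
Your proof is correct, but it takes a genuinely different route from the paper's. The paper applies Chiswell's Mayer--Vietoris sequence for the \emph{entire} graph of groups at once,
\[
\bigoplus_{e\in E(T)}H_1(T_e, M)\rightarrow \bigoplus_{v\in V(T)}H_1(T_v, M) \rightarrow H_1(G, M),
\]
and then shows that the image of the first map meets each summand $H_1(T_v, M)$ trivially via a combinatorial argument: a nonzero element of $\bigoplus_e H_1(T_e, M)$ has finite support $S \subset T$, so $S$ has a ``leaf'' edge $e$ with an endpoint $e^\pm$ distinct from $v$ and from the endpoints of all other edges in $S$; the projection of the boundary to $H_1(T_{e^\pm}, M)$ is then $(\partial_e^{\pm})_\#(m_e) \neq 0$, so the boundary cannot lie in $H_1(T_v, M)$. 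This handles infinite trees in one stroke. You instead reduce to finite trees by the standard commutation of group homology with directed colimits, then induct on the number of edges peeling off a leaf vertex $v_0$ and applying the Mayer--Vietoris sequence for the single amalgam $G = G' *_{T_e} T_{v_0}$. The inductive step is set up correctly: the hypothesis of the lemma gives injectivity on the $T_{v_0}$ side, while the inductive hypothesis composed with the hypothesis gives injectivity on the $G'$ side, and exactness then pins both $H_1(T_{v_0}, M)$ and $H_1(G', M)$ inside $H_1(G, M)$. Your approach trades the general graph-of-groups Mayer--Vietoris sequence and the leaf-of-support trick for the more familiar amalgam Mayer--Vietoris plus a colimit reduction; both are sound, and the paper's version is perhaps slightly more self-contained in that it never leaves the infinite tree.
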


\begin{proof}
We assume that each edge in $T$ is oriented. Denote by $E(T)$ the set of oriented edges of $T$ and by $V(T)$ the set of vertices. Denote by $e^{-}$ the origin vertex and by $e^+$ the target vertex of the edge $e$. The edge inclusions are denoted by $\partial_e^{\pm}\colon T_e\to T_{e^{\pm}}$. The maps on homology induced by inclusion are then denoted by $(\partial_e^{\pm})_{\#}\colon H_1(T_e, M)\to H_1(T_{e^{\pm}}, M)$.

Let us inspect the following extract from the Mayer--Vietoris sequence induced by the graph of groups structure (see work of Chiswell \cite{Ch76}):
\[
\bigoplus_{e\in E(T)}H_1(T_e, M)\rightarrow \bigoplus_{v\in V(T)}H_1(T_v, M) \rightarrow H_1(G, M).
\]
The fist map is $\sum_{e\in E(T)}((\partial_e^-)_{\#} + (\partial_e^+)_{\#})$. In order to complete the proof, we need to show that 
\[
\Ima\left(\sum_{e\in E(T)}((\partial_e^-)_{\#} + (\partial_e^+)_{\#})\right)\cap H_1(T_v, R) = 0
\]
for all $v\in V(T)$ as this will imply the second map is injective on each factor. 

Let $v\in V(T)$, let $0\neq m\in \bigoplus_{e\in E(T)}H_1(T_e, R)$ and denote by $m_e$ the projection of $m$ to the $H_1(T_e, R)$ component. Let $S\subset T$ be the union of all edges $e$ such that $m_e\neq 0$. Since $T$ is a tree and $S$ is compact, it follows that there is some edge $e\subset S$ such that $e^-$ (or $e^+$) is distinct from $v$ and distinct from all other endpoints of edges in $S$. Thus, the projection of
\[
\left(\sum_{e\in E(T)}((\partial_e^-)_{\#} + (\partial_e^+)_{\#})\right)(m) = \sum_{e\in E(T)}((\partial_e^-)_{\#}(m_e) + (\partial_e^+)_{\#}(m_e))
\]
to $H_1(T_{e^-}, M)$ (or $H_1(T_{e^+}, M)$) is non-zero. In particular, the above element cannot lie in $H_1(T_v, M)$ and so the proof is complete.
\end{proof}

\begin{proof}[Proof of \cref{perfect_subgroup}]
By \cref{amalgam}, we have
    \[
    G \isom A\underset{A_0}{*}G_0\underset{B_0}{*}B
    \]
where $A_0$ and $B_0$ are the subgroups generated by the elements in $A$ and $B$ appearing in $w$ respectively and where $G_0\isom \frac{A_0*B_0}{\normal{w^k}}$. Now $K$ acts on the Bass--Serre tree for this amalgam. Since $K\cap A = 1 = K\cap B$, the quotient graph of groups for $K$ has trivial edge groups and each non-trivial vertex group is conjugate into $G_0$. Hence, $K$ splits as a free product of subgroups of conjugates of $K\cap G_0$ and a free group. Since $H_1(K, R) = 0$, we actually must have that $K$ splits as a free product of conjugates of $K_0 = K\cap G_0$. This implies that $H_1(K_0, R) = 0$. Hence, if we can show that the theorem holds for $K_0\triangleleft G_0$, we will have shown that it also holds for $K\triangleleft G$. We note that all the same hypotheses still hold for $K_0\triangleleft G_0$.

The proof now proceeds by induction on the word length of $w$. By \cref{HNN}, $G_0$ splits as a HNN-extension $H*_{\psi}$ where $H \isom \frac{J_0*\Z}{\normal{u^k}} \isom \frac{\Z*J_1}{\normal{v^k}}$ with $\psi$ identifying $J_0$ with $J_1$ and such that the lengths of $u$ and $v$ are strictly smaller than the length of $w$. Importantly, since $u$ and $v$ are equal to $w$ as words over $A_0*B_0$, we have that $\phi(x)\neq 1$ for all proper non-empty subwords $x$ of $u$ and $v$. We will now show that $K_0$ splits as a free product of conjugates of $K_0\cap H$.

Now $K_0$ acts on the Bass--Serre tree for the splitting $G_0\isom H*_{\psi}$. The fundamental group of a graph of groups surjects the fundamental group of its underlying graph. Since $H_1(K_0, R) = 0$, this implies that the underlying graph of the quotient graph of groups for $K_0$ has trivial fundamental group and so is a tree. By \cref{relator_form}, the hypotheses of \cref{homology_injection} are satisfied for the one-relator product $H = \frac{J_0*\Z}{\normal{u^k}} \isom \frac{\Z*J_1}{\normal{v^k}}$ with normal subgroup $K_0\cap H$. Hence, applying \cref{homology_injection}, we see that the maps on homology over $R$ from edge groups to adjacent vertex groups in this graph of groups are injective. By \cref{tree_injection}, since $H_1(K_0, R) = 0$, the first homology of the edge groups over $R$ must vanish. Since each edge group is a subgroup of a conjugate of $J_0$ or $J_1$, by \cref{HNN} each edge group splits as a free product of subgroups of conjugates of $A$ and $B$ and a free group. Since $K_0\cap A = 1 = K_0\cap B$, this implies that each edge group is free. Since each edge group is $R$-perfect, it follows that each edge group must actually be trivial. Thus, $K_0$ splits as a free product of conjugates of $K_0\cap H$ as claimed. 

In the base case, $J_0 = J_1 = 1$ and $H = \langle w\rangle\isom \Z/k\Z$. Since $RQ$ is a domain, $Q$ is torsion-free and so $H = K_0\cap H$. For the inductive step we have $K_0\cap H = \normal{w}\cap H$. Since $K$ splits as a free product of conjugates of $K_0$, we then have that $K = \normal{w}$. Finally, since $H_1(\Z/k\Z, R) = 0$ precisely when $k$ is a unit in $R$, the result follows.
\end{proof}

We may slightly improve \cref{perfect_subgroup} and \cref{lift_criterion} for one-relator groups.

\begin{corollary}
\label{perfect_subgroup_1-rel}
Let $R$ be a domain, $F$ a free group, a word $w\in F$ that is not a proper power and consider the one-relator group $G = F/\normal{w^k}$ where $k\in \N$. Let $K\trianglelefteq G$ be a normal subgroup such that, if $\phi\colon G\to G/K = Q$ denotes the quotient map, then $RQ$ is a domain and $\phi(u)\neq 1$ for all proper non-empty subwords $u$ of $w$. If $H_1(K, R) = 0$, then $K= \normal{w}$ and $k$ is a unit in $R$.
\end{corollary}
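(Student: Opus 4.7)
The strategy is to apply \cref{perfect_subgroup} after a Bass--Serre reduction, using \cref{homology_injection} to verify its hypothesis $K\cap A = K\cap B = 1$ in the one-relator setting.

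\textbf{Reduction to $F = F_w$.} Let $S$ be a free basis of $F$, let $S_w\subseteq S$ be the subset of generators appearing in the cyclically reduced representative of $w$, and write $F = F_w * F_0$ with $F_w = \langle S_w\rangle$ and $F_0 = \langle S\setminus S_w\rangle$, so that $G\isom G_w*F_0$ with $G_w = F_w/\normal{w^k}$. I would analyse the action of $K$ on the Bass--Serre tree of this free product: edge stabilisers are trivial, and vertex stabilisers are conjugates of $G_w$ or of $F_0$. Since $R$ is a nonzero domain and $H_1(K,R) = 0$, the quotient graph of groups has trivial fundamental group and each vertex group is $R$-acyclic; the $F_0$-type vertex groups, being subgroups of a free group, are therefore trivial. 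Using $K\trianglelefteq G$, every $G_w$-type vertex group is a $G$-conjugate of $K_w := K\cap G_w$, so $K = \normal{K_w}^G$, and all hypotheses of the corollary restrict cleanly to $K_w\trianglelefteq G_w$ (with $Q_w := G_w/K_w \hookrightarrow Q$). We may therefore assume $F = F_w$, i.e., $w$ uses every free generator.

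If $|w| = 1$, then $F\isom\Z$ and $G\isom\Z/k\Z$, so the hypothesis that $RQ$ is a domain forces $Q$ to be trivial; hence $K = G = \normal{w}$, and $0 = H_1(K,R) = R/kR$ gives that $k$ is a unit in $R$.

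\textbf{Main case $|w|\geq 2$.} Pick any $s\in S_w$ and set $A = \langle s\rangle$, $B = \langle S_w\setminus\{s\}\rangle$; both $A$ and $B$ are free, hence locally indicable, and after a cyclic conjugation $w$ is a cyclically reduced word of length at least two over $A*B$ that is not a proper power. Hence $G\isom(A*B)/\normal{w^k}$ is a one-relator product to which \cref{perfect_subgroup} potentially applies. To verify the hypothesis $K\cap A = K\cap B = 1$, note that the annihilator condition of \cref{homology_injection} holds (because $RQ$ is a domain and the subword hypothesis is satisfied), which yields
\[
\left(\bigoplus_{AhK}H_1(K\cap A^h, R)\right)\oplus\left(\bigoplus_{BhK}H_1(K\cap B^h, R)\right) \injects H_1(K, R) = 0.
\]
Hence $H_1(K\cap A^h, R) = 0 = H_1(K\cap B^h, R)$ for every $h$. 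By \cref{inclusion}, $A$ and $B$ inject into $G$, so each conjugate $A^h$, $B^h$ is free; the $R$-acyclic subgroups $K\cap A^h$ and $K\cap B^h$ of these free groups must therefore be trivial. In particular $K\cap A = 1 = K\cap B$, and \cref{perfect_subgroup} now gives $K = \normal{w}^G$ and $k\in R^\times$, as required.

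The main obstacle is precisely this hypothesis $K\cap A = K\cap B = 1$ of \cref{perfect_subgroup}, which is not given in the corollary. It is overcome by combining \cref{homology_injection} with the observation peculiar to the one-relator setting that $A$ and $B$ may be chosen to be free, so that every $R$-acyclic subgroup of a conjugate of $A$ or $B$ in $G$ is automatically trivial.
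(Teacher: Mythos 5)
Your proof is correct and takes essentially the same route as the paper: the core observation in both is that one can choose a free product decomposition $F = A*B$ with $A, B$ free, so that \cref{homology_injection} combined with $H_1(K,R)=0$ forces $K\cap A = K\cap B = 1$ (since free groups have no nontrivial $R$-acyclic subgroups when $R$ is a nonzero domain), and then \cref{perfect_subgroup} finishes the job. The only difference is in the reduction step: you first pass to the minimal free factor $F_w$ via a Bass--Serre argument and then split off a single generator, whereas the paper uses the dichotomy primitive/non-primitive, disposing of the primitive case directly ($G\isom F'*\Z/k\Z$) and for the non-primitive case asserting the existence of a decomposition $F = A*B$ with $w$ cyclically reduced of length $\geq 2$. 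Your version is somewhat more explicit; the paper's is more compressed. Both are sound.
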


\begin{proof}
If $w\in F$ is a primitive element, then $F/\normal{w^k}\isom F'*\Z/k\Z$ for some free group $F'$ and where $\Z/k\Z$ is generated by the image of $w$. In this case, the result is clear. If $w\in F$ is not primitive, then we may express $F$ as a free product of two free groups $A*B$ such that $w$ is a cyclically reduced word over $A*B$ of length at least two. By \cref{homology_injection}, we have that $H_1(K\cap A, R)$ and $H_1(K\cap B, R)$ inject into $H_1(K, R) = 0$ which implies that $H_1(K\cap A, R) = 0 = H_1(K\cap B, R)$. Since non-trivial subgroups of free groups have non-trivial first homology, this implies that $K\cap A = 1 = K\cap B$ and we may complete the proof by applying \cref{perfect_subgroup}.
\end{proof}

Using \cref{perfect_subgroup_1-rel} in place of \cref{perfect_subgroup}, we may slightly improve \cref{lift_criterion} for groups with cyclic relation module. The proof is identical to that of \cref{lift_criterion}, although we may use Weinbaum's theorem \cite{We72} in place of Howie's.

\begin{corollary}
\label{lift_criterion_1-rel}
Let $R$ be a ring, let $Q = F/N$ be a group such that $RQ$ is a domain and such that $N_{R} = (r)$ for some $r\in N_{R}$. Let $w\in N$ be a cyclically reduced element such that the image of $w$ in $N_{R}$ is $r$. Then $N = \normal{w}$ if and only if no proper non-empty subword of $w$ lies in $N$.
\end{corollary}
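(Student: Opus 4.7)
The plan is to parallel the proof of \cref{lift_criterion}, swapping Howie's theorem (\cref{inclusion}) for Weinbaum's theorem \cite{We72} and \cref{perfect_subgroup} for its one-relator analogue \cref{perfect_subgroup_1-rel}. The forward direction is then immediate: if $N = \normal{w}$, Weinbaum's theorem says that no proper non-empty subword of a cyclically reduced element $w \in F$ lies in $\normal{w} = N$.

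For the converse, assume no proper non-empty subword of $w$ lies in $N$. Write $w = v^k$ with $v \in F$ cyclically reduced and not a proper power and $k \geqslant 1$. Since $w \in N$ we have $\normal{v^k} \leqslant N$, so $K = N/\normal{v^k}$ is a normal subgroup of $G = F/\normal{v^k}$. Applying \cref{generating_lemma} with ``$N$'' $= \normal{w}$, ``$P$'' $= N$ and $W = \{w\}$---the image of $w$ generates $N_R = (r)$ as an $RQ$-module by hypothesis---we obtain $H_1(K, R) = 0$.

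Next, I verify the hypotheses of \cref{perfect_subgroup_1-rel} for $K \trianglelefteq G$. The quotient $G/K$ is $Q = F/N$; $RQ$ is a domain by hypothesis, which in particular forces $R$ to be a domain since $R \hookrightarrow RQ$; and for every proper non-empty subword $u$ of $v$, $u$ is also a proper non-empty subword of $w$ (either $k = 1$ and $u$ is proper in $v = w$, or $k \geqslant 2$ and $v$ is itself a proper subword of $w$), so $u \notin N$ and the image of $u$ in $Q$ is non-trivial. \cref{perfect_subgroup_1-rel} then yields $K = \normal{v}$ in $G$, equivalently $N = \normal{v}$ in $F$.

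Finally, to rule out $k \geqslant 2$: in that case $v$ itself is a proper non-empty subword of $w = v^k$ lying in $\normal{v} = N$, contradicting the standing hypothesis. Hence $k = 1$, $w = v$, and $N = \normal{w}$. The one point that requires a little care is this last step---\cref{perfect_subgroup_1-rel} concludes with $\normal{v}$ rather than $\normal{w}$, so the subword hypothesis must be invoked a second time to force $w$ itself to be a non-power.
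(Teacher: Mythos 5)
Your proof is correct and follows the same route the paper intends: the paper itself only says the argument is ``identical to that of \cref{lift_criterion}, with Weinbaum's theorem in place of Howie's,'' and you fill in the details faithfully (Weinbaum for the forward direction; \cref{generating_lemma} to get $H_1(N/\normal{w},R)=0$; then \cref{perfect_subgroup_1-rel} for the converse). You also correctly notice and close the one gap the paper glosses over, namely that \cref{perfect_subgroup_1-rel} is stated for a non--proper-power relator, so one must first write $w=v^k$ and then rule out $k\geqslant 2$; your argument for this is valid, though it can be done slightly earlier and more directly by observing that $RQ$ a domain forces $Q$ torsion-free, so $v^k\in N$ already gives $v\in N$, contradicting the subword hypothesis when $k\geqslant 2$.
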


\cref{Weinbaum_converse} from the introduction is \cref{lift_criterion_1-rel} in the special case $R = \Z$.

\section{Admissibility results}
\label{sec:admissibility}

In this section, we shall use \cref{perfect_subgroup} to prove all our admissibility results. Since our main results will account for rings other than $\Z$ and will actually be relativisations of the results stated in the introduction, we will need to set up some further notation and definitions first.

Let $X$ be a $G$-$2$-complex and let $A_*\oplus C_*(X, R)$ be a complex of free $RG$-modules and let $B_n\subset A_n$ be free bases for each $n\geqslant 0$. We say that $A_*\oplus C_*(X, R)$ is \emph{$R$-admissible relative to $X$} if there is a $G$-$2$-complex $Z$, containing $X$, and an $RG$-isomorphism $C_*(Z, R)\to A_* \oplus C_*(X, R)$ which is the identity on $C_*(X, R)\subset C_*(Z, R)$ and which sends natural free basis elements for $C_n(Z, R)$ to the chosen free basis elements in $B_n$. When $X = \emptyset$, we say that $A_*$ is \emph{$R$-admissible}. When $R = \Z$, we say $A_*\oplus C_*(X)$ is \emph{admissible relative to $X$}. When $R = \Z$ and $X = \emptyset$, this coincides with the definition of admissibility given in the introduction. We say $A_*\oplus C_*(X, R)$ is \emph{weakly} $R$-admissible relative to $X$ if it is $R$-admissible for some choice of free basis $B_2\subset A_2$. Other variations of weak admissibility are defined in the same way as the variations of $R$-admissibility.

We say that the complex $A_*\oplus C_*(X, R)$ with differentials $d_n$ is \emph{algebraic relative to $X$} if the following holds:
\begin{enumerate}
\item $d_n\mid C_n(X, R) = 1\otimes\partial_n\colon C_n(X, R) \to C_{n-1}(X, R)$ for each $n$,
\item $H_1(A_*\oplus C_*(X, R)) = 0$ and $H_0(A_*\oplus C_*(X, R)) = R$,
\item for each $E\in B_1$, we have $d_1(E) = g_1\cdot V_1 - g_0\cdot V_0$ where $g_i\in G$ and $V_i$ either lies in $B_0$ or is a natural basis element of $C_0(X, R)$ for $i = 0, 1$.
\end{enumerate}
If $X = \emptyset$ and $A_*$ satisfies the last two conditions above, we simply say $A_*$ is \emph{algebraic}.

\subsection{Weak admissibility over a domain}

The main technical result from which all our main results will be derived is the following.

\begin{theorem}
\label{R-relative_main}
Let $R$ be a ring, let $G$ be a group and let $X$ be a $G$-$2$-complex with $H_1(X, R) = 0$. Let
\[
\begin{tikzcd}
A_2\oplus C_2(X, R) \arrow[r, "d_2"] & A_1\oplus C_1(X, R) \arrow[r, "d_1"] & A_0\oplus C_0(X, R)
\end{tikzcd}
\]
be an algebraic $RG$-$2$-complex relative to $X$. Let
\begin{align*}
B_0 &= \{V_1, V_2, \ldots\}\subset A_0,\\
B_1 &= \{E_1, E_2, \ldots\}\subset A_1,\\
B_2 &= \{C_1, C_2, \ldots\}\subset A_2,
\end{align*}
be free $RG$-bases and suppose that the induced map $A_2\to A_1$ is given by a lower trapezoidal matrix $M_2$ over the given bases. 

If $RG$ is a domain and the $ij_i$-entry of $M_2$ is not left engulfing for each $i$, then there is a $G$-$2$-complex $Z$ and an $RG$-isomorphism $f$ of complexes
\[
\begin{tikzcd}
{C_2(Z, R)} \arrow[r, "1\otimes\partial_2"] \arrow[d, "f_2"] & {C_1(Z, R)} \arrow[r, "1\otimes\partial_1"] \arrow[d, "f_1"] & {C_0(Z, R)}  \arrow[d, "f_0"]\\
A_2\oplus C_2(X, R) \arrow[r, "d_2"] & A_1\oplus C_1(X, R) \arrow[r, "d_1"] & A_0\oplus C_0(X, R) 
\end{tikzcd}
\]
with the following properties:
\begin{enumerate}
\item $X\subset Z$ and $f\mid C_*(X, R)$ is the identity.
\item $f_0$ and $f_1$ send $G$-orbit representatives of $0$-cells and $1$-cells in $Z-X$ to basis elements in $B_0$ and $B_1$. Moreover, $Z-X$ contains a $G$-orbit of $2$-cells for each $C_l$ and $f_2$ sends an orbit representative of the $l^{\text{th}}$ $2$-cell in $Z-X$ to an element of the form 
\[
r_l\cdot C_l + \sum_{i = 1}^{l-1}r_i\cdot C_i + x
\]
where $x\in C_2(X, R)$, each $r_i\in RG$ and where $r_l\in R$ is a unit.
\item The attaching maps of $2$-cells in $Z-X$ are all embeddings.
\item $G\backslash Z$ is reducible to $G\backslash X$.
\item If each component of $X$ is simply connected and each component of $G\backslash X$ has locally indicable fundamental group, then $Z$ is simply connected and $G\isom \pi_1(G\backslash Z)$ is locally indicable.
\end{enumerate}
\end{theorem}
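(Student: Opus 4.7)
The plan is to construct $Z$ by inductively attaching free $G$-orbits of cells to $X$: first one $G$-orbit of $0$-cells for each $V_i \in B_0$, then one $G$-orbit of $1$-cells for each $E_j \in B_1$ with endpoints dictated by the algebraicity condition $d_1(E_j) = g_1 V_1 - g_0 V_0$, and finally one $G$-orbit of $2$-cells for each basis element $C_1, C_2, \ldots$ of $B_2$ in the order given by the lower trapezoidal structure. The $0$-cell and $1$-cell stages are immediate and produce a $G$-$1$-complex $Y_0 \supset X$ with $C_k(Y_0, R) \cong A_k \oplus C_k(X, R)$ for $k \leq 1$, together with the obvious chain map identifying these bases. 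Throughout the subsequent construction I maintain a partial $RG$-chain isomorphism $f^{(l-1)}$ onto the subcomplex generated by $B_0$, $B_1$ and $C_1, \ldots, C_{l-1}$.

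The inductive step is the heart of the argument. Given $Y_{l-1}$ and $f^{(l-1)}$, I need to attach a $G$-orbit of $2$-cells with orbit representative $c_l$ via an attaching map $\lambda_l : S^1 \to Y_{l-1}^{(1)}$ whose chain boundary realises $r_l \cdot C_l + \sum_{i < l} r_i C_i + x$ for some unit $r_l \in R^\times$. By the lower trapezoidal hypothesis, $d_2(C_l)$ is supported on $E_1, \ldots, E_{j_l}$ and natural basis elements of $C_1(X, R)$, with diagonal coefficient $m_{l,j_l}$ on $E_{j_l}$; moreover, since $j_{i} < j_l$ for $i < l$, the $1$-cell $e_{j_l}$ has not yet appeared in the boundary of any previously attached $2$-cell. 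This ensures that attaching $c_l$ through $e_{j_l}$ corresponds to an elementary reduction of $G\backslash Y_l$ onto $G\backslash Y_{l-1}$ minus $e_{j_l}$.

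The main obstacle, and the crux of the proof, is producing such a $\lambda_l$. For this I would apply the relative lift criterion (\cref{lift_criterion}) to the one-relator product obtained at stage $l$: after passing to the quotient at the new $1$-cell $e_{j_l}$ and identifying the relevant fundamental group with a free product of locally indicable factors (using \cref{HNN} and \cref{amalgam} together with the inductive hypothesis that all stages so far have produced locally indicable quotients), the problem of realising $d_2(C_l)$ as the boundary of a single $2$-cell becomes a cyclic relation lifting problem. The non-left-engulfing of the diagonal coefficient $m_{l,j_l}$ translates precisely into the statement that a cyclically reduced candidate word $w_l$ representing the correct class in the $R$-relation module has no proper non-empty subword lying in the kernel, so \cref{lift_criterion} produces the desired normally generating element and hence the attaching map $\lambda_l$. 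The cyclic reducedness of $w_l$ immediately gives embeddedness of $\lambda_l$, and the unit $r_l \in R^\times$ comes out of the Weinbaum-style analysis together with the domain hypothesis on $RG$.

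Finally I would verify (3)--(5). Embeddedness and the elementary reduction property are read off from the cyclically reduced form of each $\lambda_l$ and from the fact that $\lambda_l$ traverses $e_{j_l}$ essentially by non-engulfing. Reducibility of $G \backslash Z$ to $G \backslash X$ is immediate from the inductive construction, as the chain of subcomplexes $G\backslash Y_0 \subset G\backslash Y_1 \subset \ldots$ realises the required sequence of elementary reductions. For (5), I would argue by induction: at each step, assuming $\pi_1$ of each component of $G \backslash Y_{l-1}$ is locally indicable, Howie's theorem (\cref{inclusion} and \cref{inclusion_corollary}) applied to the elementary reduction $G\backslash Y_{l-1} \subset G\backslash Y_l$ preserves local indicability; simple connectedness of $Z$ is preserved at each stage because attaching an embedded $2$-cell that traverses a new $1$-cell essentially kills exactly the new generator contributed by $e_{j_l}$.
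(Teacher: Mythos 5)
There is a genuine gap at the heart of the proposal: the construction of the attaching map $\lambda_l$ is not actually carried out. You invoke \cref{lift_criterion}, but that result is a \emph{verification} criterion, not a \emph{construction}: given a cyclically reduced element $w$ whose image in $N_R$ is a generator, it tells you $N = \normal{w}$ if and only if no proper non-empty subword of $w$ lies in $N$. It does not tell you how to find such a $w$ in the first place, and this is exactly where the non-engulfing hypothesis has to do real work. The claim that non-engulfing of $m_{l,j_l}$ ``translates precisely'' into the subword condition is asserted, not demonstrated, and the translation is far from immediate — non-engulfing is a statement about supports of elements of $RG$, while the subword condition is a group-theoretic statement about the kernel. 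Moreover, applying \cref{lift_criterion} would require knowing that $RQ$ is a domain for the quotient $Q$ arising at stage $l$, which is not given; the hypothesis is only that $RG$ is a domain.

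The paper instead isolates the constructive content into a purely combinatorial lemma (\cref{inductive_lemma}): given a cycle $r \in \ker(1\otimes\partial_1)$ with non-engulfing projection onto a distinguished $1$-cell $h$ and such that $\ker(1\otimes\partial_1) = RG\cdot r + K$, one can extract an \emph{embedded} cycle $\lambda$ in the $1$-skeleton and a unit $u \in R^\times$ with $r \in (u\otimes[\lambda]) + K$. This is proved by analysing the minimal subgraph $\Gamma$ supporting $r$ and using the non-engulfing hypothesis to pin down the coefficient of $[h]$; no one-relator theory is invoked. The embeddedness of $\lambda_l$ therefore comes directly from this lemma, not from cyclic reducedness of a word (which, in any case, does not imply the corresponding lift to the universal cover is an embedded cycle). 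Preceding this, the paper establishes by induction the splitting $K_{j_i} = d_2(RG\cdot C_i) + K_{j_i-1} \cong RG \oplus K_{j_i-1}$, which is what puts things in the form where \cref{inductive_lemma} applies; this step is also absent from your outline. The one-relator product machinery (\cref{perfect_subgroup}, \cref{homology_injection}) is used in the paper only \emph{afterwards}, to verify property (5) by showing that the kernel of $\pi_1(\overline{X}_k) \to G$ is $R$-perfect and hence trivial — not to construct the attaching maps. Your argument has the logical direction reversed, and without a substitute for \cref{inductive_lemma} the inductive step does not go through.
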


We have the following immediate corollary.

\begin{corollary}[Weak relative $R$-Admissibility]
\label{weak_relative_R-admissibility}
Let $R$ be a ring, let $G$ be a group with $RG$ a domain and let $X$ be a $G$-CW-complex such that each component of $X$ is simply connected and each component of $G\backslash X$ has locally indicable fundamental group. Let
\[
\begin{tikzcd}
A_2\oplus C_2(X, R) \arrow[r, "d_2"] & A_1 \oplus C_1(X, R) \arrow[r, "d_1"] & A_0\oplus C_0(X, R)
\end{tikzcd}
\]
be an algebraic $RG$-complex relative to $X$ with chosen free bases for each $A_i$. If the map $A_2\to A_1$ induced by $d_2$ can be given by a lower trapezoidal matrix over the given bases, such that the $ij_i$ entries are all non left engulfing, then $A_*\oplus C_*(X, R)$ is weakly $R$-admissible relative to $X$.
\end{corollary}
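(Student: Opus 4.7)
The plan is to derive this directly from Theorem \ref{R-relative_main}, checking that its hypotheses are all met and then interpreting its conclusion in the language of weak relative $R$-admissibility via a basis change on $A_2$.

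First I would verify the hypotheses of Theorem \ref{R-relative_main}. Since each component of $X$ is simply connected, $H_1(X,\Z)=0$ and $H_0(X,\Z)$ is free, so by the universal coefficient theorem $H_1(X,R)=0$ for any ring $R$. The remaining hypotheses --- that $RG$ is a domain, that $A_*\oplus C_*(X,R)$ is algebraic relative to $X$, that $d_2$ is given by a lower trapezoidal matrix $M_2$ over the chosen bases, and that each diagonal entry $m_{ij_i}$ of $M_2$ is not left engulfing --- are all part of the hypotheses of the corollary.

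Next, applying Theorem \ref{R-relative_main} produces a $G$-$2$-complex $Z$ containing $X$ together with an $RG$-chain isomorphism
\[
f\colon C_*(Z,R)\longrightarrow A_*\oplus C_*(X,R)
\]
which restricts to the identity on $C_*(X,R)$. By clause (5) of that theorem, the additional assumptions that each component of $X$ be simply connected and each component of $G\backslash X$ have locally indicable fundamental group guarantee that $Z$ is simply connected. The maps $f_0$ and $f_1$ send $G$-orbit representatives of the $0$- and $1$-cells of $Z-X$ directly onto the chosen basis elements of $B_0$ and $B_1$, so the only remaining obstruction to $R$-admissibility (in the sense of the stated definition) concerns dimension two.

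The final step is to change the basis in $A_2$. By clause (2) of Theorem \ref{R-relative_main}, $f_2$ sends an orbit representative of the $l^{\text{th}}$ $2$-cell in $Z-X$ to an element of the form $r_l\cdot C_l+\sum_{i<l}r_i\cdot C_i+x_l$ with $r_l\in R^{\times}$, $r_i\in RG$, and $x_l\in C_2(X,R)$. Since $f_2$ restricts to the identity on $C_2(X,R)$ and the collection above is strictly lower triangular over $B_2$ with unit diagonal entries in $R$, the $RG$-linear transformation expressing these images in terms of $B_2$ is invertible. Letting $B_2'\subset A_2$ be the set of images under $f_2$ of the chosen orbit representatives of $2$-cells in $Z-X$, one obtains a new free $RG$-basis of $A_2$ with respect to which $f_2$ sends natural basis elements of $C_2(Z,R)$ exactly onto $B_2'$. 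Hence $A_*\oplus C_*(X,R)$ is $R$-admissible relative to $X$ for the basis $B_2'$, and therefore weakly $R$-admissible relative to $X$. There is no real obstacle here beyond having Theorem \ref{R-relative_main} available; the only subtle point is noting that the unit diagonal entries $r_l\in R^{\times}$ ensure the triangular change of basis is genuinely invertible.
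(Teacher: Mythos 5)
Your argument is correct and is essentially the paper's intended approach: the paper simply labels this an immediate corollary of \cref{R-relative_main}, and what you have written is exactly the routine verification that is implicitly being left to the reader — checking $H_1(X,R)=0$ from the simple connectivity of components of $X$, invoking clause (2) to produce the unitriangular change of basis $B_2 \rightsquigarrow B_2'$, and invoking clause (5) so that $Z$ is the simply connected $G$-$2$-complex witnessing weak $R$-admissibility relative to $X$.
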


Using the fact that $RG$ contains no engulfing elements if $R$ is a domain and $G$ is right orderable by \cref{ro}, \cref{weak_relative_R-admissibility} implies the following.

\begin{corollary}[Weak $R$-Admissibility]
\label{weak_R-admissibility}
Let $R$ be a domain, let $G$ be a right orderable group and let
\[
\begin{tikzcd}
A_2 \arrow[r, "d_2"] & A_1 \arrow[r, "d_1"] & A_0
\end{tikzcd}
\]
be an algebraic $RG$-complex with chosen free bases for each $A_i$. If the map $d_2\colon A_2\to A_1$ can be given by a lower trapezoidal matrix over the given bases, then $A_*$ is weakly $R$-admissible.
\end{corollary}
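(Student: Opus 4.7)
The plan is to derive this corollary directly from \cref{weak_relative_R-admissibility} by taking $X = \emptyset$ and verifying the remaining hypotheses are automatic given our assumptions on $G$ and $R$.

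First I would invoke \cref{ro}: since $G$ is right orderable and $R$ is a domain, $G$ has left $2$-unique products and hence $RG$ contains no left engulfing elements; in particular $RG$ is a domain (see \cref{domain_remark}). This immediately discharges the domain hypothesis in \cref{weak_relative_R-admissibility} and, moreover, guarantees that \emph{every} non-zero element of $RG$ is non left engulfing. Thus the diagonal entries $m_{ij_i}$ of the lower trapezoidal matrix defining $d_2$ are automatically non left engulfing, since they are non-zero by the definition of lower trapezoidal (\cref{trapezoidal_defn}).

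Next I would set $X = \emptyset$, viewed as the empty $G$-CW-complex. Each component of $X$ is vacuously simply connected and each component of $G\backslash X$ vacuously has locally indicable fundamental group, so the hypotheses on $X$ in \cref{weak_relative_R-admissibility} are satisfied trivially. The complex $C_*(X, R)$ is the zero complex, so the given algebraic $RG$-complex $A_*$ coincides with $A_*\oplus C_*(X, R)$, and being algebraic in the absolute sense coincides with being algebraic relative to $X$.

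Applying \cref{weak_relative_R-admissibility} then yields that $A_*\oplus C_*(X, R) = A_*$ is weakly $R$-admissible relative to $X = \emptyset$, which by definition means weakly $R$-admissible. Since everything in this reduction is immediate from the earlier setup, there is no substantive obstacle; the corollary is essentially the specialization of the relative statement to the absolute case together with the engulfing-free conclusion of \cref{ro}.
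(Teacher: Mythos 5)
Your argument is correct and is exactly the route the paper takes: the paper's one-line justification before stating \cref{weak_R-admissibility} is precisely that \cref{ro} supplies the domain and non-engulfing hypotheses, after which \cref{weak_relative_R-admissibility} is applied with $X=\emptyset$. Nothing is missing.
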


Before beginning the proof of \cref{R-relative_main}, we shall need a technical lemma.

\begin{lemma}
\label{inductive_lemma}
Let $G$ be a group, let $X$ be a $G$-$2$-complex and consider the $RG$ map $1\otimes \partial_1\colon C_1(X, R)\to C_0(X, R)$. Let $h\subset X$ be a 1-cell and denote by $K = \ker(1\otimes \partial_1)\cap p_h^{-1}(0)$ where $p_h\colon C_1(X, R)\to RG\cdot [h]$ denotes the projection map. If $r\in \ker(1\otimes \partial_1)$ is such that $0\neq p_h(r)$ is left non-engulfing and such that $\ker(1\otimes \partial_1) = RG\cdot r + K$, then there is an embedded cycle $\lambda\colon S^1\injects X^{(1)}$ and a unit $u\in R$ such that $r\in (u\otimes[\lambda]) + K$.
\end{lemma}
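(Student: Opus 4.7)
The plan is to construct the desired embedded cycle $\lambda$ inside a carefully chosen $G$-invariant subgraph of $X^{(1)}$ in which $r$ remains a $1$-cycle, and then use the non-engulfing hypothesis to force the coefficients of $p_h(r)$ into the required form.

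Write $p_h(r) = \sum_{g} a_g\, g$ with $a_g \neq 0$ for $g\in \supp(p_h(r))$, and let $H_r = \{g\cdot h \mid g\in \supp(p_h(r))\}\subset X^{(1)}$. Consider the $G$-invariant subgraph
\[
X_1 = X^{(1)} \setminus \{g\cdot h \mid g\notin \supp(p_h(r))\}.
\]
By the definition of $p_h$, every translate of $h$ in $\supp(r)$ already lies in $H_r$, so $r\in C_1(X_1, R)$ and $r$ is still a $1$-cycle of $X_1$. I would first show that some edge of $H_r$ lies on an embedded cycle in $X_1$. Suppose for contradiction that every edge of $H_r$ is a bridge in $X_1$, and let $T$ be the graph obtained from $X_1$ by contracting each connected component of $X_2 := X_1\setminus H_r$ to a point; its edges are then naturally indexed by $H_r$. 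Any loop in $T$ (an edge of $H_r$ with both endpoints in the same component of $X_2$) or any multi-edge in $T$ (two edges of $H_r$ joining the same pair of components of $X_2$) could be completed via embedded paths inside the connected components of $X_2$ to an embedded cycle of $X_1$ through the corresponding edge of $H_r$, contradicting the bridge assumption. Hence $T$ is simple and every edge of $T$ is a bridge in $T$, so $T$ is a forest.

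Pushing $r$ forward along the chain map induced by the quotient $\phi\colon X_1\to T$ gives a $1$-cycle $\phi_*(r) = \sum_{g\in \supp(p_h(r))} a_g\cdot \phi_*(g\cdot h) \in C_1(T,R)$. Since the $\phi_*(g\cdot h)$ are distinct edges of $T$ (no multi-edges), and $T$ is a forest (so $Z_1(T,R) = H_1(T,R) = 0$), the $1$-cycle $\phi_*(r)$ vanishes, forcing $a_g = 0$ for all $g$, a contradiction. Thus there exists an embedded cycle $\lambda_0\colon S^1\injects X_1$ through some edge $g_0\cdot h\in H_r$.

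Choose an orientation on $\lambda_0$ and consider $[\lambda_0]\in C_1(X,R)$, a $1$-cycle. By the hypothesis $\ker(1\otimes \partial_1) = RG\cdot r + K$, we may write $[\lambda_0] = s\cdot r + k$ for some $s\in RG$, $k\in K$; applying $p_h$ yields $p_h([\lambda_0]) = s\cdot p_h(r)$. Because $\lambda_0$ is embedded and contained in $X_1$, the element $p_h([\lambda_0])$ has coefficients in $\{-1,0,1\}$ with support contained in $\supp(p_h(r))$; hence $\supp(s\cdot p_h(r))\subset \supp(p_h(r))$, and the non-engulfing hypothesis on $p_h(r)$ forces $s\in R$. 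Evaluating the coefficient at $g_0$, we get $s\cdot a_{g_0} = \pm 1$, so $s$ is a unit in $R$. Setting $u = s^{-1}\in R^\times$, we obtain $p_h(r) = u\cdot p_h([\lambda_0])$, and therefore $r - (u\otimes [\lambda_0])\in \ker(1\otimes\partial_1)\cap p_h^{-1}(0) = K$, as required.

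The main obstacle I expect is the structural step showing that $T$ is a forest; one must carefully rule out both loops and multi-edges in $T$ by exhibiting concrete embedded cycles of $X_1$ that would violate the bridge assumption, after which the pushforward argument is forced by the triviality of $H_1$ of a forest.
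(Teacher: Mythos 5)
Your proof is correct and shares the overall skeleton of the paper's argument (produce an embedded cycle $\lambda$ traversing some translate of $h$ inside a subgraph where $r$ is still a cycle, then use the non-engulfing hypothesis together with the $\pm1$ coefficients of $[\lambda]$ to extract a unit), but the step that produces the cycle is genuinely different. The paper passes to the \emph{minimal} finite subcomplex $\Gamma$ supporting $r$, applies non-engulfing once to establish the structural equality $Z_1(\Gamma,R) = R\cdot r + K\cap Z_1(\Gamma,R)$, and then invokes the spanning-forest/fundamental-cycle decomposition of a graph's cycle space to split off $R\cdot(1\otimes[\lambda])$ for an embedded cycle $\lambda$ through a translate of $h$. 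You instead work in the larger graph $X_1$ (note: not $G$-invariant, contrary to your preamble, though this is never used) and derive the existence of the cycle from an elementary bridge argument before invoking non-engulfing at all. Your approach is a touch longer but more self-contained, replacing the appeal to fundamental cycles with a contraction-to-a-forest contradiction; the paper's is shorter and applies non-engulfing earlier to get a stronger structural statement about $Z_1(\Gamma,R)$. Two small points worth noting. First, your claim that every edge of $T$ is a bridge in $T$ is stated without justification; it follows by lifting a hypothetical cycle in $T$ back to $X_1$, but one can bypass $T$ entirely with the cleaner observation that a bridge $e$ has zero coefficient in every $1$-cycle of $X_1$ (use the indicator cochain of one side of the separation), so if all edges of $H_r$ were bridges then $p_h(r)=0$. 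Second, the final inference ``$s\,a_{g_0}=\pm1$ implies $s\in R^\times$'' uses that a one-sided inverse in $R$ is two-sided, which is automatic when $R$ is a domain; the paper's proof has the same implicit reliance, and this is satisfied in every application of the lemma, so it is not a real gap.
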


\begin{proof}
Let $E\cup\{h\}$ be a collection of $G$-orbit representatives of 1-cells in $X$ and let $V$ be a collection of $G$-orbit representatives of 0-cells so that
\begin{align*}
C_0(X, R) &\isom \bigoplus_{v\in V}RG\cdot [v]\\
C_1(X, R) &\isom RG\cdot [h]\oplus\bigoplus_{e\in E}RG\cdot [e]
\end{align*}
Denote by $\Gamma\subset X^{(1)}$ the minimal subcomplex such that $r\in Z_1(\Gamma, R)$. Note that $\Gamma$ does not contain any vertices of degree one or less and is not necessarily connected. We claim that:
\begin{equation}
\label{key_equality}
Z_1(\Gamma, R) = R\cdot r + K\cap Z_1(\Gamma, R).
\end{equation}
Let $\delta\in Z_1(\Gamma, R)$. Since $Z_1(\Gamma, R)\leqslant Z_1(X, R) = RG\cdot r + K$, there is some $t\in RG$ and some $k\in K$ such that $t\cdot r + k = \delta$. By definition of $\delta$, we then must have that $\Gamma$ contains the 1-cells in 
\[
\supp(t\cdot p_h(r))\cdot [h] = \{ g\cdot h \mid g\in \supp(t\cdot p_h(r))\}. 
\]
By definition of $\Gamma$, the only 1-cells in the $G$-orbit of $h$ that lie in $\Gamma$ are those in $\supp(p_h(r))\cdot h$. Thus, we have $\supp(t\cdot p_h(r)) \subset \supp(p_h(r))$ and so, since $p_h(r)$ was assumed to be left non-engulfing, we have $\supp(t) = 1$. In other words, $t\in R$ and so $\delta\in R\cdot r + K\cap Z_1(\Gamma, R)$ as claimed.

If $\Gamma_h\subset \Gamma$ is the subcomplex obtained by removing all $G$-orbits of the 1-cell $h$, then $K\cap Z_1(\Gamma, R) = Z_1(\Gamma_h, R)$ and $Z_1(\Gamma, R) \isom R\oplus Z_1(\Gamma_h, R)$ by \cref{key_equality} and the fact that $Z_1(\Gamma, R)\neq Z_1(\Gamma_h, R)$. Hence, there is some embedded cycle $\lambda\colon S^1\injects \Gamma$ traversing some $g\cdot h$ for some $g\in G$, such that $Z_1(\Gamma, R) \isom R\cdot(1\otimes[\lambda])\oplus Z_1(\Gamma_h, R)$. Since the projection of $R\cdot r + Z_1(\Gamma_h, R) = Z_1(\Gamma, R) = R\cdot (1\otimes[\lambda]) + Z_1(\Gamma_h, R)$ to the $RG\cdot [h]$ factor is generated by $R\cdot p_h(r)$ and also by the image of $R\cdot(1\otimes[\lambda])$, we see that there is a unit $u\in R$ such that $r\in (u\otimes [\lambda]) +Z_1(\Gamma_h, R) \subset (u\otimes [\lambda]) + K$ as claimed.
\end{proof}

\begin{proof}[Proof of \cref{relative_main}]
Denote by $n_i = |B_i|\in \N\cup\{\infty\}$. We have $A_0 = \bigoplus_{j=1}^{n_0}\Z G\cdot V_j$, $A_1 = \bigoplus_{j=1}^{n_1}\Z G\cdot E_j$ and $A_2 = \bigoplus_{j=1}^{n_2}\Z G\cdot C_j$. By assumption, for each $E_j$, there are elements $g_{j, 0}, g_{j, 1}\in G$ and elements $v_0, v_1$ which are either 0-cells in $Y$ or basis elements in $B_0$, such that $d_1(E_j) = g_{j, 1}\cdot v_1 - g_{j, 0}\cdot v_0$. Denote by $M_2$ the $RG$-matrix such that the map $A_2\to A_1$ is given by right multiplication by $M_2$. By assumption, for each $i\in [n_2]$, there is an integer $j_i$ such that $C_i\cdot M_2 = (m_{i0}, m_{i1}, \ldots, m_{ij_i}, 0, \ldots)$ with $m_{ij_i}$ not left engulfing and $m_{ij} = 0$ for all $j>j_i$. 

We now construct an oriented $G$-1-complex $Y$. The 0-cells of $Y$ are in bijection with the 0-cells in $X$ and the $G$-orbits of each $V_j$ and the 1-cells are in bijection with the 1-cells in $X$ and the $G$-orbits of the $E_j$. The oriented 1-cell corresponding to $g\cdot E_j$ has origin 0-cell $g_{j, 0}\cdot v_0$ and target 0-cell $g_{j, 1}\cdot v_1$. For each $i\in [n_1]$, denote by $Y_i\subset Y$ the subcomplex consisting of $X^{(1)}$ together with the $G$-orbits of the basis 1-cells from $\{E_1, \ldots, E_i\}$ and the $G$-orbits of the adjacent 0-cells. Since $H_0(A_*\oplus C_*(X, R)) \isom R$, the CW-complex $Y$ is connected and so
\[
Y = \bigcup_{i = 1}^nY_i.
\]
Denote by $d_1^i\colon C_1(Y_i, R) \to C_0(Y_i, R)$ the boundary map restricted to the subcomplex $Y_i$. Note that the following diagram commutes and has exact rows
\[
\begin{tikzcd}
{C_1(Y_i, R)} \arrow[r, "d_1^i"] \arrow[d, hook]            & {C_0(Y_i, R)} \arrow[d, hook]                                    &                                            &   \\
{C_1(Y, R)} \arrow[r, "1\otimes\partial_1"] \arrow[equal]{d} & {C_0(Y, R)} \arrow[r, "\epsilon"] \arrow[equal]{d} & R \arrow[r] \arrow[equal]{d}& 0 \\
A_1 \oplus C_1(X, R)\arrow[r, "d_1"]                                       & A_0\oplus C_0(X, R) \arrow[r, "\epsilon"]                                        & R \arrow[r]                                & 0
\end{tikzcd}
\]
where $\epsilon$ is the augmentation map defined by sending each basis element to $1\in R$. For each $l\in [n_2]$, denote by $K_l = \ker(d_1^l)$, setting $K_0 = \ker(C_1(X, R)\to C_0(X, R))$. For each $l\in [n_1]$, denote by 
\[
I_l = C_2(X, R)\oplus \bigoplus_{i = 1}^lRG\cdot C_i,
\]
setting $I_0 = C_2(X, R)$. We claim that for each $i\in [n_2]$, we have 
\begin{align}
\label{claim}
K_{j_i} &= d_2(I_i)\\
	    &= d_2(RG\cdot C_i) + K_{j_i-1}\\
	     &\isom RG\oplus K_{j_i-1}
\end{align}
The proof proceeds by induction on $i$. Suppose the claim is true for all integers less than $i$ and let $k\in K_{j_i} - K_{j_i - 1} = K_{j_i} - d_2(I_{i-1})$. By exactness, there is some $(r_1, r_2, \ldots, r_l, 0, \ldots)\in A_2$, where $r_l\neq 0$ and $r_p = 0$ for all $p>l$, such that
\[
p_{A_1}(k) = (r_1, \ldots, r_l, 0, \ldots)\cdot M_2 = (r_1, \ldots, r_{l-1}, 0, \ldots)\cdot M_2 + r_{l}\cdot(m_{l1}, \ldots, m_{lj_l}, 0, \ldots)
\]
where $p_{A_1}\colon C_1(X, R)\oplus A_1\to A_1$ denotes the projection to $A_1$. Note that since $m_{lj_l}$ is left non-engulfing, we have that $\Ann_{RG}(m_{lj_l}) = 0$ and so $r_l\cdot m_{lj_l}\neq 0$. Since $m_{pj_l} = 0$ for all $p<l$ and $k\in K_{j_i}$ we must have that $j_l\leqslant j_i$. This implies that $l\leqslant i$ and so that $k\in d_2(I_i)$ as claimed. The fact that $d_2(I_i)= d_2(RG\cdot C_i) + K_{j_i - 1} \isom RG\oplus K_{j_i-1}$ now also readily follows from the fact that $RG$ is a domain and the fact that the projection of $K_{j_i-1}$ to the $j_i^{\text{th}}$ factor $RG\cdot E_{j_i}$ of $A_1$ is $0$.

Using \cref{claim}, for each $i\in [n_2]$, we may now apply \cref{inductive_lemma} to $d_1^{j_i}\colon C_1(Y_{j_i}, R)\to C_0(Y_{j_i}, R)$ with $E_{j_i}$ playing the role of the 1-cell $h$, $K_{j_i - 1}$ playing the role of $K$ and with $d_2(C_i)$ playing the role of $r$. We thus obtain embedded cycles $\lambda_i\colon S^1\injects Y_{j_i}$ and units $u_i\in R$ such that
\[
d_2(C_i) \in (u_i\otimes[\lambda_i]) + d_2(I_{i-1}).
\]

Let $Z$ be the $G$-CW-complex obtained from $Y$ by attaching all cells from $X$ and by attaching $2$-cells along each $\lambda_i$ and their $G$-orbits. The attaching maps of $2$-cells in $Z-X$ are embeddings by definition of the $\lambda_i$. The maps $f_0$ and $f_1$ have already been defined. Define the map $f_2$ as the identity on $2$-cells in $X$ and by sending the $2$-cell with attaching map $\lambda_i$ to $u_i^{-1}\cdot (C_i - \beta_i)$. This fulfills the required properties of the map $f\colon C_*(Z, R) \to C_*(X, R)\oplus A_*$ and establishes the first three properties. 

For each $i\in [n_1]$, denoting by $X_i\subset Z$ the maximal $G$-$2$-subcomplex with $Y_i = X_i^{(1)}$, we claim that $G\backslash X_i\subset G\backslash X_{i+1}$ is an elementary reduction so that $G\backslash Z$ reduces to $G\backslash X$. If $i+1\neq j_{l}$ for any $l\in [n_2]$, then $G\backslash X_{i+1} - G\backslash X_i$ contains no $2$-cells and contains a single 1-cell. Thus, it is an elementary reduction. On the other hand, if $i+1 = j_l$ for some $l\in [n_2]$, then $G\backslash X_{i+1} - G\backslash X_i$ contains precisely one $2$-cell (attached along the projection $\overline{\lambda}_{l}\colon S^1\to G\backslash X_{i+1}$ of $\lambda_{l}$) and precisely one 1-cell (the projection $\overline{E}_{i+1}$ of the 1-cell $E_{i+1}$). Moreover, the attaching map $\overline{\lambda}_{l}$ of the $2$-cell in $G\backslash X_{i+1} - G\backslash X_i$ traverses the 1-cell $\overline{E}_{i+1}\subset G\backslash X_{i+1} - G\backslash X_i$. Since any free homotopy in $G\backslash X_{i+1}$ of $\overline{\lambda}_{l}$ can be lifted to a free homotopy of $\lambda_{l}$ in $X_{i+1}$, in order to complete the proof of the claim, we need to show that $\lambda_{l}$ is not freely homotopic into $X_i$ within $X_i \cup X_{i+1}^{(1)}$. But since $\lambda_{l}$ was an embedding and there are no $2$-cells attached along the $G$-orbit of $E_{i+1}$, any cycle that is freely homotopic to $\lambda_{l}$ in $X_i\cup X_{i+1}^{(1)}$ must still traverse some $g\cdot E_{i+1}$. Hence, $G\backslash X_i\subset G\backslash X_{i+1}$ is an elementary reduction and so $G\backslash Z$ is reducible to $G\backslash X$. This establishes the fourth property.

All that remains to show is that if each component of $X$ is simply connected and each component of $G\backslash X$ has locally indicable fundamental group, then $Z$ is simply connected and $G$ is locally indicable. We prove by induction that each component of $X_i$ is simply connected and that each component of $G\backslash X_i$ has locally indicable fundamental group for each $i\in [n_1]$. From here it follows that $X$ will be simply connected and that $G$ will be locally indicable. The base case of the induction $X_0 = X$ holds by assumption. Now suppose the result is true for all $i<k$ and we wish to prove the claim for $X_k$. Let $\overline{X}_k\subset G\backslash X_k$ be any connected component. There is some subcomplex $\overline{X}_{k-1}\subset G\backslash X_{k-1}$ such that $\overline{X}_{k-1}\subset \overline{X}_k$ is an elementary reduction. There are now two cases to consider, depending on whether $\overline{X}_{k-1}$ is connected or not. Let us suppose that $\overline{X}_{k-1}$ is not connected so that $\overline{X}_{k-1} = \overline{X}_{k-1}'\sqcup \overline{X}_{k-1}''$ with $\overline{X}_{k-1}', \overline{X}_{k-1}''$ both connected. The other case is handled in the same way so we omit it. By induction, since each component of $X_{k-1}$ is simply connected, we have that $A = \pi_1(\overline{X}_{k-1}'), B = \pi_1(\overline{X}_{k-1}'')$ are both locally indicable subgroups of $G$. We have $\pi_1(\overline{X}_k) \isom \frac{A*B}{\normal{w}}$ where $w\in A*B$ is either $1$ or is given by the attaching map of $\lambda_i$ where $j_i= k$. Now consider the homomorphism
\[
\phi\colon\pi_1(\overline{X}_k) \to G
\]
The kernel $\ker(\phi)$ is precisely the fundamental group of a (any) component of $X_k$ mapping to $\overline{X}_k$. Since each $\lambda_i$ is an embedding, no proper non-empty subword $u$ of $w$ satisfies $\phi(u) = 1$. Since $A, B$ are subgroups of $G$, we have $\ker(\phi)\cap A = \ker(\phi)\cap B = 1$. By \cref{claim}, we have that
\[
\begin{tikzcd}
C_2(X_k, R) \arrow[r] & C_1(X_k, R) \arrow[r] & C_0(X_k, R)
\end{tikzcd}
\]
is exact, and hence $H_1(X_k, R) = 0$ and so $H_1(\ker(\phi), R) = 0$. We may now apply \cref{perfect_subgroup} to obtain that $\ker(\phi) = 1$. Hence, each component of $X_k$ is simply connected. Since $G$ is torsion-free and $\pi_1(\overline{X}_k)\leqslant G$, it follows from \cref{inclusion} that $\pi_1(\overline{X}_k)$ is locally indicable. This concludes the proof.
\end{proof}

\subsection{Admissibility over the integers}

When $R = \Z$, we may improve \cref{relative_main} by dropping the weakness. Combined with \cref{wall_theorem}, this establishes the remaining direction of \cref{reducible_intro}, the other direction being \cref{lower_trapezoidal}.

\begin{theorem}
\label{relative_main}
Let $G$ be a group and let $X$ be a $G$-$2$-complex with $H_1(X) = 0$. Let
\[
\begin{tikzcd}
A_2\oplus C_2(X) \arrow[r, "d_2"] & A_1\oplus C_1(X) \arrow[r, "d_1"] & A_0\oplus C_0(X)
\end{tikzcd}
\]
be an algebraic $\Z G$-complex relative to $X$. Let 
\begin{align*}
B_0 &= \{V_1, V_2, \ldots\}\subset A_0,\\
B_1 &= \{E_1, E_2, \ldots\}\subset A_1,\\
B_2 &= \{C_1, C_2, \ldots\}\subset A_2,
\end{align*}
be free $\Z G$-bases and suppose the induced map $A_2\to A_1$ is given by a lower trapezoidal matrix $M_2$ over the given bases.

If each component of $X$ is simply connected, $\Z G$ is a domain and the $ij_i$-entry of $M_2$ is not left engulfing for each $i$, then there is a $G$-$2$-complex $Z$ and a $\Z G$-isomorphism
\[
\begin{tikzcd}
{C_2(Z)} \arrow[r, "1\otimes\partial_2"] \arrow[d, "f_2"] & {C_1(Z)} \arrow[r, "1\otimes\partial_1"] \arrow[d, "f_1"] & {C_0(Z)} \arrow[d, "f_0"]\\
A_2\oplus C_2(X) \arrow[r, "d_2"] & A_1\oplus C_1(X) \arrow[r, "d_1"] & A_0\oplus C_0(X)
\end{tikzcd}
\]
with the following properties:
\begin{enumerate}
\item $X\subset Z$ and $f\mid C_*(X)$ is the identity.
\item $f_i$ sends $G$-orbit representatives of $i$-cells in $Z-X$ to basis elements in $B_i$.
\item $G\backslash Z$ is homotopy equivalent, keeping $G\backslash X$ fixed, to a CW-complex $Z'$  such that $Z'$ is reducible to $G\backslash X$.
\item If each component of $G\backslash X$ has locally indicable fundamental group, then $Z$ is simply connected and $G\isom \pi_1(G\backslash Z)$ is locally indicable. 
\end{enumerate}
\end{theorem}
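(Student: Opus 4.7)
The plan is to first apply \cref{R-relative_main} with $R = \Z$ to obtain a $G$-$2$-complex $Z_0$ together with an isomorphism $f^0 \colon C_*(Z_0)\to A_*\oplus C_*(X)$ satisfying conclusions (1)--(5) of that theorem, and then to modify $Z_0$ by a finite sequence of $G$-equivariant cellular homotopy equivalences relative to $X$ in order to eliminate the off-diagonal terms in the formula for $f^0_2$. Writing $\tilde c_l$ for the chosen orbit representative of the $l^{\text{th}}$ $2$-cell of $Z_0 - X$, the weak version yields
\[
f^0_2([\tilde c_l]) \;=\; \epsilon_l\, C_l \;+\; \sum_{i<l} s_{l,i}\, C_i \;+\; x_l,
\]
where, since $\Z^{\times} = \{\pm 1\}$, we have $\epsilon_l \in \{\pm 1\}$, $s_{l,i} \in \Z G$ of finite support, and $x_l \in C_2(X)$. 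The fact that $\epsilon_l$ is an honest sign rather than a general unit in $\Z G$ is precisely what will allow the modification procedure to succeed.

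Process the cells $l = 1, 2, \dots$ in order. If $\epsilon_l = -1$ then reverse the orientation of $\tilde c_l$ and all its $G$-translates, which replaces $\epsilon_l$ by $+1$. Then perform a sequence of handle slides on $\tilde c_l$: each slide replaces the attaching map $\alpha$ of $\tilde c_l$ by $\alpha\cdot w\beta^{\pm 1}w^{-1}$, where $\beta$ is the (current) attaching map of an already-processed cell $\tilde c_i$ with $i<l$, or of a $2$-cell of $X$, and $w$ is a path in the $1$-skeleton. Because $w\beta^{\pm 1}w^{-1}$ is null-homotopic in the current $2$-complex, such a slide is a cellular homotopy equivalence that fixes both $X$ and the $1$-skeleton. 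At the chain level, sliding $\tilde c_l$ over $\tilde c_i$ via an element $g\in G$ adds $\pm g\cdot\partial_2[\tilde c_i]$ to $\partial_2[\tilde c_l]$; by induction $\tilde c_i$ already satisfies $f_2([\tilde c_i]) = C_i$, so this translates into adding $\pm g\cdot C_i$ to $f_2([\tilde c_l])$, while slides over a $2$-cell of $X$ modify $f_2([\tilde c_l])$ within $C_2(X)$. Expanding $s_{l,i}$ and $x_l$ as finite integer combinations of group-translated basis elements, finitely many slides suffice to cancel every off-diagonal term and leave $f_2([\tilde c_l]) = C_l$.

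Let $Z$ be the resulting $G$-$2$-complex and $f$ the induced chain isomorphism. Property (1) holds because the modifications never touched $X$, and property (2) is built into the construction. For (3), the composed $G$-equivariant homotopy equivalence $Z \to Z_0$ rel $X$ descends to a homotopy equivalence $G\backslash Z\to G\backslash Z_0$ rel $G\backslash X$, and $G\backslash Z_0$ is already reducible to $G\backslash X$ by \cref{R-relative_main}(4), so one may take $Z' = G\backslash Z_0$. Property (4) follows because homotopy equivalences preserve fundamental groups, so the simple connectivity of $Z$ and the identification $G\isom\pi_1(G\backslash Z)$ with $G$ locally indicable are inherited from $Z_0$ via \cref{R-relative_main}(5). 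The main point to verify carefully is that handle slides can be arranged $G$-equivariantly — which is done by modifying orbit representatives and extending by translation — and that the slides performed at stage $l$ do not disturb the already-correct values of $f_2([\tilde c_i])$ for $i<l$; the latter is automatic since a slide on $\tilde c_l$ affects only $\partial_2[\tilde c_l]$ and no other boundary.
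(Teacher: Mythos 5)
Your proposal is correct and follows essentially the same strategy as the paper's proof: apply \cref{R-relative_main} with $R=\Z$, observe that $\Z^{\times} = \{\pm 1\}$ forces the diagonal coefficient to be a genuine sign, fix it to $+1$ by reorienting, and then modify the attaching maps of the $2$-cells one at a time (in the order of the filtration) to cancel the off-diagonal contributions. What the paper phrases as finding, by induction, a new attaching loop $\lambda_l$ freely homotopic to $\lambda_l'$ inside $Z'^{(1)}\cup X'_{j_l-1}$ and satisfying $[\lambda_l]=[\lambda_l']-\sum_{i<l}r_i[\lambda_i]-\partial_2(x_l)$ is exactly your sequence of handle slides over the already-processed cells $\tilde c_i$ ($i<l$) and over $2$-cells of $X$ — both descriptions use that the correction terms come from null-homotopic loops, that each $s_{l,i}$ has finite support so finitely many slides per cell suffice, and that the operations are performed $G$-equivariantly on orbit representatives and extended by translation. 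The paper first constructs the new complex $Z$ outright and then verifies a rel-$G\backslash X$ homotopy equivalence $G\backslash Z\simeq G\backslash Z'$, while you build the homotopy equivalence step by step as the composition of the slides; this is a matter of presentation rather than substance.
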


\begin{proof}
Let $n_i = |B_i| \in \N\cup\{\infty\}$, let $Z'$ be the $G$-$2$-complex and let $f'\colon C_*(Z')\to A_*\oplus C_*(X)$ be the map furnished by \cref{R-relative_main}. For each $i\in [n_1]$, let $X'_{i}\subset X_{i+1}'$ be the inclusion of $G$-$2$-complexes such that $G\backslash X_i'\subset G\backslash X_{i+1}'$ is an elementary reduction for each $i$. In particular, $X'_0 = X$, $X'_{i+1} - X_{i}'$ contains a single $G$-orbit of $2$-cell $G\cdot c'_{l}$ attached along the attaching maps $G\cdot \lambda_{l}'$ if $i+1 = j_{l}$ or a single $G$-orbit of 1-cell otherwise and such that $Z' = \bigcup_{i=0}^{n_1} X_i'$. By \cref{R-relative_main}, we have that 
\[
f_2'([c_l']) = r_l\cdot C_l + \sum_{i=1}^{l-1}r_i\cdot C_i + x_l.
\]
Since here $R = \Z$ and $r_l$ is a unit, we have that $r_l = \pm1$. After possibly changing orientations, we assume that $r_l = 1$ for all $l$.

We now claim that for all $l\in [n]$ there is a cycle $\lambda_l\colon S^1\to Z'^{(1)}$ such that
\[
[\lambda_l] = [\lambda_l'] - \sum_{i=1}^{l-1}r_i\cdot [\lambda_{i}] - \partial_2(x_l)\in C_1(Z').
\]
and such that $\lambda_l$ is freely homotopic to $\lambda_l'$ within $Z'^{(1)}\cup X'_{j_l-1}$. The proof of the claim proceeds by induction on $l$. In the base case $l = 1$, since each component of $X$ is simply connected, there is some $\lambda_1\colon S^1\to Z'^{(1)}$ which is freely homotopic to $\lambda_1'$ within $X\cup Z'^{(1)}$ and such that $[\lambda_1] = [\lambda_1'] - \partial_2(x_1)$. Now suppose the claim is true for all $\lambda_i$ with $i<l$. It follows by induction that if $i<l$, then $\lambda_i$ is nullhomotopic in $X'_{j_l-1}\cup Z'^{(1)}$ since it is freely homotopic to the boundary of a $2$-cell, $\lambda_i'$. Hence, there is some $\lambda_l\colon S^1\to Z'^{(1)}$ which is freely homotopic to $\lambda_l'$ within $X'_{j_l-1}\cup Z'^{(1)}$ and such that $[\lambda_l] = [\lambda_l'] - \sum_{i=1}^{l-1}r_i\cdot [\lambda_{i}] - \partial_2(x_l)$ as required.

Now let $Z$ be the $G$-CW-complex obtained from $Z'$ by removing all $2$-cells in $Z'-X$ and attaching $2$-cells along $G$-orbits of each $\lambda_l$ instead. By construction, the map $f\colon C_*(Z)\to A_*\oplus C_*(X)$ given by $f'$ on $C_*(X\cup Z^{(1)})\leqslant C_*(Z)$ and by sending the $2$-cell attached along $\lambda_l$ to $C_l$ is a $\Z G$-isomorphism.

Since $\lambda_l$ is freely homotopic to $\lambda_l'$ within $Z^{(1)}\cup X'_{j_l-1}$ by construction, we have that $G\backslash Z$ is homotopy equivalent to $G\backslash Z'$, keeping $G\backslash X$ fixed. Moreover, $G\backslash Z'$ is reducible to $G\backslash X$ and so we have proven the third statement.

We finally prove the fourth statement. By \cref{R-relative_main}, if $X$ is simply connected and each component of $G\backslash X$ has locally indicable fundamental group, then $G$ is locally indicable and $Z'$ is simply connected. Since $Z'$ is homotopy equivalent to $Z$, also $Z$ is simply connected as required.
\end{proof}

\begin{corollary}[Relative Admissibility]
Let $G$ be a group and let $X$ be a $G$-CW-complex such that each component of $X$ is simply connected and each component of $G\backslash X$ has locally indicable fundamental group. Let
\[
\begin{tikzcd}
A_2\oplus C_2(X) \arrow[r, "d_2"] & A_1 \oplus C_1(X) \arrow[r, "d_1"] & A_0\oplus C_0(X)
\end{tikzcd}
\]
be an algebraic $\Z G$-complex relative to $X$ with chosen free bases for each $A_i$. If $\Z G$ is a domain and the map $A_2\to A_1$ induced by $d_2$ can be given by a lower trapezoidal matrix over the given bases, such that the $ij_i$ entries are all non left engulfing then $A_*\oplus C_*(X)$ is admissible relative to $X$.
\end{corollary}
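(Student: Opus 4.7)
The plan is to deduce this corollary immediately from \cref{relative_main}. First I would verify its hypotheses. The assumption that each component of $X$ is simply connected forces $H_1(X) = 0$, which is the only hypothesis of \cref{relative_main} not stated verbatim in the corollary. Since the relative admissibility condition only refers to $C_*(X)$ in degrees $0$, $1$ and $2$, I may harmlessly replace $X$ by $X^{(2)}$ and view it as a $G$-$2$-complex, thereby aligning the setup with that of the theorem.

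Having checked the hypotheses, I would then invoke \cref{relative_main} directly. Its conclusion produces a $G$-$2$-complex $Z \supset X$ together with a $\Z G$-isomorphism of chain complexes $f\colon C_*(Z) \to A_* \oplus C_*(X)$ which restricts to the identity on $C_*(X)$ and such that each $f_i$ sends $G$-orbit representatives of $i$-cells in $Z - X$ to the prescribed basis elements in $B_i$. Reading off the definition of admissibility relative to $X$ from the start of \cref{sec:admissibility}, this map $f$ is precisely a realisation of $A_* \oplus C_*(X)$ relative to $X$ with respect to the given bases, so admissibility follows.

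There is no real obstacle here: all of the substantive content---constructing $Z$, arranging for the prescribed basis $B_2$ (rather than merely \emph{some} basis) to arise, and accommodating the lower trapezoidal structure together with the non-engulfing condition on the diagonal entries---is carried out in the proof of \cref{relative_main}. In particular, the upgrade from weak to strict admissibility is achieved there by exploiting the fact that $\Z^{\times} = \{\pm 1\}$, so that the unit $r_l$ appearing in front of $C_l$ can be absorbed into a choice of orientation on the corresponding $2$-cell. The reducibility of $G \backslash Z$ and the local indicability conclusions provided by \cref{relative_main} are not needed for the present statement but come for free.
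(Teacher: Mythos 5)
Your reduction of the corollary to \cref{relative_main} is correct and is precisely the route the paper takes (the paper states the corollary without a separate proof, treating it as immediate from the preceding theorem). You correctly note that simple connectivity of the components of $X$ supplies the hypothesis $H_1(X) = 0$, and that the isomorphism $f$ from conclusions (1) and (2) of \cref{relative_main} is exactly what the definition of ``admissible relative to $X$'' in \cref{sec:admissibility} asks for.

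One remark is worth making explicit, however. You dismiss the local indicability hypothesis (and hence conclusion (4) of \cref{relative_main}, that $Z$ is simply connected) as ``not needed but coming for free.'' If one reads the definition of relative admissibility literally as written in \cref{sec:admissibility}, then indeed conclusions (1)--(2) suffice. But the paper also asserts that, in the case $R = \Z$ and $X = \emptyset$, this definition ``coincides with the definition of admissibility given in the introduction,'' and the latter is realisability by a \emph{simply connected} $G$-CW-complex. Moreover, the downstream application of \cref{absolute_admissibility} in the proof of \cref{reducible_intro} feeds into \cref{wall_theorem}, which requires admissibility in that stronger Wall-ian sense. So the local indicability hypothesis in the corollary is not decorative: it is exactly what makes conclusion (4) fire, ensuring $Z$ is simply connected, without which the stated coincidence with the introduction's notion of admissibility (and the usefulness of the corollary) would be in doubt. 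You should therefore invoke conclusion (4) as well, rather than treating it as a bonus.
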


The following is an immediate corollary of \cref{relative_main} and is \cref{main_intro} from the introduction.

\begin{corollary}[Absolute Admissibility]
\label{absolute_admissibility}
Let $G$ be a right orderable group and let 
\begin{equation*}
\begin{tikzcd}
A_2 \arrow[r, "d_2"] & A_1 \arrow[r, "d_1"] & A_0
\end{tikzcd}
\end{equation*}
be an algebraic $\Z G$-complex with chosen free bases for each $A_i$. If the map $d_2$ can be given by a lower trapezoidal matrix over the given bases, then $A_*$ is admissible.
\end{corollary}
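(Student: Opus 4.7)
The plan is to derive this as an immediate specialisation of the relative admissibility result, Theorem \ref{relative_main}, taken with $X = \emptyset$. The only hypotheses that need verifying are those pertaining to $\Z G$ and to the matrix $M_2$: we need $\Z G$ to be a domain, and we need each diagonal entry $m_{ij_i}$ of $M_2$ to be non left engulfing. Both conditions fall out of Lemma \ref{ro}, which records that when $G$ is right orderable, $\Z G$ contains no left engulfing elements whatsoever; combined with Remark \ref{domain_remark}, this also yields that $\Z G$ is a domain. Since each $m_{ij_i}$ is a nonzero element of $\Z G$ by the very definition of a lower trapezoidal matrix, it certainly is not left engulfing, and so all the matrix-theoretic hypotheses of Theorem \ref{relative_main} are met.

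The remaining hypotheses of Theorem \ref{relative_main} applied with $X = \emptyset$ are vacuous: the conditions $H_1(X) = 0$, simple connectedness of components of $X$, and local indicability of components of $\pi_1(G\backslash X)$ all hold trivially because $X$ has no components at all. With these observations in hand, the algebraic relative complex $A_*\oplus C_*(X)$ is literally $A_*$, the trapezoidal hypothesis on $M_2$ is given by assumption, and Theorem \ref{relative_main} then delivers a $G$-$2$-complex $Z$ together with a $\Z G$-chain isomorphism $C_*(Z) \to A_*$ that sends natural bases to our chosen free bases. Part (4) of that theorem moreover guarantees that $Z$ is simply connected. This is exactly a realisation of the two-dimensional truncation $A_2 \to A_1 \to A_0$, so by definition $A_*$ is admissible.

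Since all of the real technical content is packaged into Theorem \ref{relative_main} (and ultimately into the engulfing analysis and Theorem \ref{perfect_subgroup}), there is no substantive obstacle to overcome in this corollary; the only point to verify is that Theorem \ref{relative_main} is formulated to permit the empty $G$-subcomplex $X$, which a glance at its statement confirms. Thus the corollary is a direct unpacking of the relative theorem in the absolute setting.
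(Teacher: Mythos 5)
Your proposal is correct and takes essentially the same approach as the paper: the paper declares the corollary to be an immediate consequence of \cref{relative_main}, and your argument fills in exactly the routine checks needed for that specialisation, namely taking $X = \emptyset$ so that all the $X$-hypotheses become vacuous, and observing via \cref{ro} together with \cref{domain_remark} that right orderability of $G$ forces $\Z G$ to be a domain with no left engulfing elements, so that in particular the (non-zero) diagonal entries of $M_2$ are non-engulfing.
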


\cref{reducible_intro} now follows by combining \cref{wall_theorem} with \cref{lower_trapezoidal} and \cref{absolute_admissibility}.

\section{Applications to relation modules}

In this section we derive consequences for relation modules from the results in \cref{sec:admissibility}. Recall that if $F$ is a free group and $N\triangleleft F$ is a normal subgroup, we say $F/N$ is a presentation for $G = F/N$ and its relation module is the left $\Z G$-module $N_{\ab} = N/[N, N]$, where the $G$ action is given by conjugation. Recall also that if $R$ is a ring, the $R$-relation module for the presentation is the left $RG$-module 
\[
N_R = R\otimes_{\Z}N_{\ab}.
\]

\begin{theorem}
\label{lifting_relations}
Let $R$ be a domain, let $F = F_A*F_B$ be a free group, $N\triangleleft F$ a normal subgroup and let $W_A\subset F_A$, $W_B\subset F_B$ be elements such that $N_R$ is generated as an $RG$-module by the images of $W_A$, $W_B$ and by one other element, where $G = F/N$.

If $G$ is right orderable and $F_A/\normal{W_A}$, $F_B/\normal{W_B}$ are locally indicable and embed in $G$, then there is some $w\in N$ such that $N = \normal{w, W_A, W_B}$ and the following hold:
\begin{enumerate}
\item If $q\in N_R$ is an element that generates $N_R$ together with the images of $W_A, W_B$, then there is some $x\in N_R$ contained in the $RG$-submodule generated by the images of $W_A, W_B$ and, a unit $u\in R$ and some $f\in F$ such that
\[
q = u\otimes w^f[N, N] + x.
\]
\item If $v\in N$ is an element whose image in $N_R$ generates $N_R$ together with $W_A, W_B$, then there is some $y\in N_{\ab}$ contained in the $\Z G$-submodule generated by the images of $W_A, W_B$, an integer $k$ such that $1\otimes k\in R\otimes_{\Z}\Z$ is a unit and some $f\in F$ such that
\[
1\otimes v[N, N] = 1\otimes (w^f)^k[N, N] + 1\otimes y.
\]
\end{enumerate}
In particular, $G =\frac{A*B}{\normal{w}}$ is a one-relator product of $A = F_A/\normal{W_A}$ and $B = F_B/\normal{W_B}$.
\end{theorem}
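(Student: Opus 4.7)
The plan is to apply \cref{R-relative_main} to an algebraic $RG$-complex relative to a $G$-$2$-complex built from $A$ and $B$, and to extract $w$ from the attaching map of the single new $2$-cell produced.

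First, let $P_A, P_B$ be the standard presentation $2$-complexes for $A$ and $B$, each with one $0$-cell. Since $A$ and $B$ embed in $G$, form the induced $G$-$2$-complexes $X_A = G\times_A \widetilde{P}_A$ and $X_B = G\times_B \widetilde{P}_B$ and set $X = X_A \sqcup X_B$. Each component of $X$ is simply connected, $H_1(X, R) = 0$, and each component of $G\backslash X = P_A\sqcup P_B$ has locally indicable fundamental group. Writing $N_0 = \normal{W_A\cup W_B}$ inside $F$ and $\overline{N} = N/N_0$, the right-exactness of $R\otimes_{\Z}-$ identifies $\overline{N}_R$ with the quotient of $N_R$ by the $RG$-span of the images of $W_A, W_B$, so the hypothesis on $N_R$ is equivalent to $\overline{N}_R$ being cyclic as an $RG$-module.

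Next, set $A_0 = 0$, $A_1 = RG\cdot E$, $A_2 = RG\cdot C$, with $d_1(E) = v_B - v_A$ for chosen orbit-representative $0$-cells $v_A\in X_A$, $v_B\in X_B$. Under the identification $\overline{N}_R \cong H_1(X\cup G\cdot E, R)$, pick $d_2(C)$ to be any $1$-cycle whose homology class generates $\overline{N}_R$; since $H_1(X, R) = 0$, any cycle with vanishing $E$-coefficient is a boundary from $C_2(X, R)$ and thus represents zero in $\overline{N}_R$, so the $E$-coefficient of $d_2(C)$ is forced to be non-zero. The complex is then algebraic, and the induced map $A_2\to A_1$ is a $1\times 1$ matrix whose single non-zero entry is non-left-engulfing in the domain $RG$ by \cref{ro}. \cref{R-relative_main} therefore produces a simply-connected $G$-$2$-complex $Z\supset X$ with one new $1$-cell orbit and one new $2$-cell orbit whose attaching map is an embedded loop in $Z^{(1)}$. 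Contracting the new $1$-cell orbit in $G\backslash Z$ exhibits $G$ as $(A*B)/\normal{w}$, where $w\in F$ is the word spelled by the attaching map; in particular $N = \normal{w, W_A, W_B}$.

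For the moreover statements, a direct argument using the chain complex of $Z$ shows that $\overline{N}_{ab}\cong \Z G$ and $\overline{N}_R\cong RG$ are free of rank $1$, generated by $\overline{w}$. Indeed, the $E$-coefficient $\beta_{\Z}$ of $\partial_2([c])$ in $\Z G$ is nonzero (its image in $RG$ equals a unit times the non-zero $E$-coefficient of $d_2(C)$), and since $\Z G$ is a domain by \cref{ro}, projecting any annihilator relation $\alpha\cdot\overline{w}=0$ to the $E$-coordinate gives $\alpha\beta_{\Z}=0$, hence $\alpha=0$; the same argument works over $R$. Then by \cref{domain_remark} together with \cref{ro}, the units of $RG$ are exactly $\{ug : u\in R^{\times},\, g\in G\}$, so any $RG$-generator $q$ of $\overline{N}_R\cong RG$ equals $ug\cdot\overline{w}$; lifting $g$ to some $f\in F$ gives (1). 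For (2), if $v\in N$ lifts a generator, then $\overline{v}\in \overline{N}_{ab}\cong \Z G$ must have a unique term $kg$ whose coefficient $k\in\Z$ satisfies $1\otimes k\in R^{\times}$ (the remaining coefficients being killed by $R$-tensoring); lifting the equality $\overline{v} = kg\cdot\overline{w} + r$ back to $N_{ab}$ and absorbing the $r$-term (which dies after tensoring with $R$) and the lifting ambiguity into an element $y$ of the $\Z G$-span of $W_A, W_B$ yields the stated formula. The main obstacle will be the careful bookkeeping required to pass between $\overline{N}_{ab}$, $\overline{N}_R$, $N_{ab}$, and $N_R$ in verifying (1) and (2).
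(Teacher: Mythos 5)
Your proposal is correct and follows essentially the same strategy as the paper: build the induced $G$-2-complex $X$ from presentation complexes for $A$ and $B$, set up the algebraic $RG$-complex relative to $X$ with $A_0=0$, $A_1=RG\cdot E$, $A_2 = RG\cdot C$, apply \cref{R-relative_main} to extract $w$ from the attaching map of the new 2-cell, and then use the unit characterization from \cref{domain_remark} to derive the formulas. The only (cosmetic) difference is that you phrase the bookkeeping via the quotients $\overline{N}_{\ab}\cong\Z G$ and $\overline{N}_R\cong RG$, whereas the paper works with the direct-sum splitting $N_R\cong d_2(C_2(\overline X,R))\oplus RG$ — these are equivalent ways of organizing the same computation.
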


\begin{proof}
Let $S_A\subset F_A$ and $S_B\subset F_B$ be free generating sets. Let $X_A$ be a presentation complex for $A = F_A/N_A$ with 1-cells in correspondence with elements of $S_A$ and $2$-cells in correspondence with elements of $W_A$. Define $X_B$ similarly. Let $Z$ be the $2$-complex obtained by attaching a 1-cell to $X = X_A\sqcup X_B$ connecting the two components and let $\widetilde{Z} \to Z$ be the regular $G$-cover with $\pi_1(\widetilde{Z}) = \ker(A*B\to G)$. Let $\overline{X}\subset \widetilde{Z}$ be the preimage of $X$ under the cover. Since $C_i(\widetilde{Z}, R) = C_i(\overline{X}, R)$ for $i = 0, 2$ and $C_1(\widetilde{Z}, R) = C_1(\overline{X}, R)\oplus RG$, we obtain the chain complex
\[
C_2(\overline{X}, R) \rightarrow C_1(\overline{X}, R)\oplus RG \xrightarrow{d_1} C_0(\overline{X}, R).
\]
Now $\ker(d_1) \isom N_R$ by \cref{submodule}. By assumption, there is some $1\neq q\in N_R$ such that the image of $C_2(\overline{X}, R)$ together with $q$ generate $\ker(d_1)$. In particular, the following complex
\[
C_2(\overline{X}, R)\oplus RG \xrightarrow{d_2} C_1(\overline{X}, R)\oplus RG \xrightarrow{d_1} C_0(\overline{X}, R).
\]
given by sending $1\in RG$ to $q$ under $d_2$ is exact. By construction, this complex is an algebraic $RG$-complex relative to $\overline{X}$. Since $A$ and $B$ embed in $G$, each component of $\overline{X}$ is simply connected and each component of $G\backslash \overline{X} = X_A\sqcup X_B$ has locally indicable fundamental group. Since group rings of right orderable groups over domains are domains and do not contain engulfing elements by \cref{ro}, we may now apply \cref{weak_relative_R-admissibility} to conclude that there is some $w\in N$ such that $N = \normal{w, W_A, W_B}$.

Now suppose instead that $d_2$ send $1\in RG$ to $1\otimes w[N, N]$. Denoting by $p\colon C_1(\overline{X}, R)\oplus RG \to RG$ the projection map, if $(x, \alpha)\in C_2(\overline{X}, R)\oplus RG$ is such that $d_2(x, \alpha) = 0$, then $p\circ d_2(x, \alpha) = d_2(x, 0) + \alpha\cdot(p\circ d_2(0, 1)) = 0$. Since $p\circ d_2(x, 0) = 0$ and $p\circ d_2(0, 1)\neq 0$, it follows that $\alpha = 0$ as $RG$ is a domain. Thus, we have 
\[
N_R\isom d_2(C_2(\overline{X}, R))\oplus RG.
\]
Let $\overline{W}\subset N_R$ denote the images of $W_A\sqcup W_B$ in $N_R$. Note that $d_2(C_2(\overline{X}, R))$ is precisely the submodule of $N_R$ generated by $\overline{W}$. By \cref{domain_remark}, every unit in $RG$ is of the form $u\cdot g$ where $u\in R$ is a unit and where $g\in G$. Since $1\otimes w[N, N], \overline{W}$ generate $N_R$, it follows that every other element $q\in N_R$ such that $q, \overline{W}$ generate $N_R$ must be of the form $y + u\cdot g\cdot (1\otimes w[N, N]) = y + u\otimes fwf^{-1}[N, N]$ for some $x\in d_2(C_2(\overline{X}, R))$ and some $f\in F$ such that $fN = g$. This yields the first statement.

Now let $v\in N$ be an element such that $1\otimes v[N, N], \overline{W}$ generate $N_R$. By the above, we have $1\otimes v[N, N] = u\otimes fwf^{-1}[N, N] + x$. Since $p(1\otimes v[N, N])=p(u\otimes fwf^{-1}[N, N])$ lies in the image of $\Z G$ under the map $\Z G\to R\otimes_{\Z}\Z G = RG$, we have that $u$ must lie in the image of $\Z$ under the map $\Z \to R\otimes_{\Z}\Z$. Thus, there is some integer $k\neq 0$ such that $u\otimes fwf^{-1}[N, N] = 1\otimes fw^kf^{-1}[N, N]$. Finally, since $x = 1\otimes v[N, N] - 1\otimes fw^kf^{-1}[N, N]$ lies in the image of $N_{\ab}$ under the map $N_{\ab}\to N_R$, we have that $x$ lies in the image of the submodule of $N_{\ab}$ generated by the images of $W_A\cup W_B$.
\end{proof}

The following corollary is \cref{lifting_relations} reformulated in the case $R = \Z$.

\begin{corollary}
\label{Zlifting}
Let $F_A$ and $F_B$ be free groups, let $N\triangleleft F_A*F_B$ be a normal subgroup and suppose that $G = (F_A*F_B)/N$ is right orderable. Let $W_A\subset F_A\cap N$, $W_B\subset F_B\cap N$ be sets such that $A = F_A/\normal{W_A}$ and $B = F_B/\normal{W_B}$ are locally indicable and embed in $G$. If $r\in N_{\Z}$ is such that the images of $W_A, W_B$ in $N_{\Z} = N/[N, N]$ together with $r$ generate $N_{\Z}$ as a left $\Z G$-module, then there is an element $w\in N$ such that:
\[
w[N, N] = r, \quad N = \normal{w, W_A, W_B}.
\]
In particular, $G \isom \frac{A*B}{\normal{r}}$.
\end{corollary}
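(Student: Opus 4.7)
The plan is to derive this as a direct consequence of \cref{lifting_relations} applied to the case $R=\Z$, with the only extra work being to arrange that the lift has image exactly $r$ rather than $r$ modulo the submodule generated by $W_A$ and $W_B$. First I would verify the hypotheses of \cref{lifting_relations}: with $R=\Z$, we have $N_R = N_{\ab}$, the sets $W_A,W_B$ and the single extra element $r$ generate $N_R$ as a $\Z G$-module by assumption, $G$ is right orderable, and $A=F_A/\normal{W_A}$, $B=F_B/\normal{W_B}$ are locally indicable subgroups of $G$ by assumption. So \cref{lifting_relations} applies and produces an element $w_0\in N$ with $N = \normal{w_0,W_A,W_B}$.

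Next, I would invoke part (1) of \cref{lifting_relations} with $q=r$. Since the only units of $\Z$ are $\pm 1$, this produces an element $x$ in the $\Z G$-submodule of $N_{\ab}$ generated by the images of $W_A,W_B$, a sign $u=\pm 1$, and an element $f\in F$, such that
\[
r \;=\; u\otimes w_0^{f}[N,N] + x.
\]
Replacing $w_0$ by $w_0^{f}$ (a conjugate, which still normally generates $N$ with $W_A,W_B$) and by its inverse if $u=-1$, I may assume $r = w_0[N,N]+x$ in $N_{\ab}$.

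The key small step is then to absorb $x$ into the element. Since $x$ lies in the $\Z G$-submodule of $N_{\ab}$ generated by the images of $W_A\cup W_B$, I can write $x = \sum_{i,j} n_{ij}\,\phi(f_{ij})\cdot w_i[N,N]$ with $n_{ij}\in\Z$, $f_{ij}\in F$ and $w_i\in W_A\cup W_B$, where $\phi\colon F\to G$ is the quotient map. Lifting this expression to $F$, the product $W' = \prod_{i,j}(f_{ij}w_if_{ij}^{-1})^{n_{ij}}$ lies in $\normal{W_A\cup W_B}$ and has image $x$ in $N_{\ab}$. Setting $w = w_0\cdot W'$, we have $w\in N$, $w[N,N] = w_0[N,N] + x = r$, and
\[
\normal{w,W_A,W_B} \;=\; \normal{w_0\cdot W', W_A,W_B} \;=\; \normal{w_0,W_A,W_B} \;=\; N
\]
since $W'\in \normal{W_A,W_B}$. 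This gives the two required properties.

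Finally, for the ``in particular'' statement: the natural quotient $F_A*F_B \twoheadrightarrow A*B$ has kernel $\normal{W_A\cup W_B}$, and $N$ contains this kernel. Passing to the quotient, the image $\bar w$ of $w$ in $A*B$ normally generates $N/\normal{W_A\cup W_B}$, whose quotient in $A*B$ is $G$; hence $G \isom (A*B)/\normal{\bar w}$, which is the one-relator product $(A*B)/\normal{r}$ under the usual identification of $r$ with any lift. I do not expect any step here to be an obstacle: the theorem does all of the real work, and the remaining manipulation is a routine adjustment of the lift by a product of conjugates of the $W_A, W_B$.
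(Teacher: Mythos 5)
Your proof is correct and takes essentially the same route as the paper's, namely to specialise \cref{lifting_relations} to $R=\Z$ and use $\Z^\times=\{\pm1\}$. Your explicit absorption step (replacing $w_0^{\pm f}$ by $w_0^{\pm f}\cdot W'$, with $W'$ a lift of $x$ to a product of conjugates of elements of $W_A\cup W_B$, so that the image in $N_{\ab}$ is exactly $r$ while the normal closure together with $W_A,W_B$ is unchanged) is a worthwhile clarification of a step the paper compresses into ``the result follows.'' The only structural difference is that the paper first disposes of the degenerate case where $r$ already lies in the $\Z G$-submodule generated by the images of $W_A,W_B$ (equivalently, these images alone generate $N_\Z$), concluding there directly that $N=\normal{W_A,W_B}$ and $G\cong A*B$, and invokes \cref{lifting_relations} only in the complementary case; you apply the theorem uniformly. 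As stated the theorem covers both cases, so your application is legitimate, but it is worth noting why the paper splits: the proof of \cref{lifting_relations} feeds a generator $q$ into \cref{weak_relative_R-admissibility}, which requires $q$ to have a non-zero, non-engulfing coefficient on the new edge, and in the degenerate case the natural choice $q=r$ has coefficient zero there, so one would need to pick $q$ outside the submodule instead. This is a point about the internal proof of the cited theorem rather than a gap in your argument.
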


\begin{proof}
Since $A$ and $B$ embed in $G$, if $r$ lies in the submodule generated by the images of $F_A\cap N$ and $F_B\cap N$, then $G\isom A*B$. Thus, $N = \normal{W_A, W_B}$. Otherwise, since the only units in $\Z$ are $\pm1$, the result follows from \cref{lifting_relations}.
\end{proof}

We may also rephrase \cref{Zlifting} as a statement about $R$-perfect subgroups of one-relator products, upgrading \cref{perfect_subgroup}.

\begin{corollary}
Let $R$ be a domain and let $G = \frac{A*B}{\normal{w}}$ be a one-relator product of locally indicable groups. If $P\triangleleft A*B$ is subgroup containing $w$ such that $P/\normal{w}$ is $R$-perfect and $G/P$ is locally indicable, then there are $R$-perfect subgroups $P_A\leqslant A$, $P_B\leqslant B$ and an element $r\in A*B$ such that $P = \normal{P_A, P_B, r}$.
\end{corollary}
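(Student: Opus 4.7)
The plan is to combine the lifting theorem \cref{lifting_relations} with the homology-injection lemma \cref{homology_injection}. Set $\bar P = P/\normal{w} \trianglelefteq G$, which is $R$-perfect by hypothesis, and $Q = G/\bar P \isom (A*B)/P$, which is locally indicable and hence right orderable; in particular $RQ$ is a domain by \cref{ro}. By Howie's Freiheitssatz (\cref{inclusion}), $A$ and $B$ embed in $G$, and under this identification $P \cap A = \bar P \cap A$ and $P \cap B = \bar P \cap B$.

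The first step will be to produce a normal generating set for $P$ of the required shape. Choose a free presentation $F = F_A * F_B \twoheadrightarrow A*B$, let $N \triangleleft F$ be the preimage of $P$, and let $W_A \subset F_A$ (resp.\ $W_B \subset F_B$) normally generate $F_A \cap N$ (resp.\ $F_B \cap N$); then $F_A/\normal{W_A} \isom A/(P\cap A)$ embeds in $Q$ and is locally indicable. For any lift $\tilde w \in F$ of $w$, the group $N/\normal{W_A \cup W_B \cup \{\tilde w\}}_F$ is naturally identified with $P/\normal{P\cap A, P\cap B, w}_{A*B}$, a quotient of the $R$-perfect group $P/\normal{w}$ and therefore itself $R$-perfect. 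By \cref{generating_lemma} the images of $W_A \cup W_B \cup \{\tilde w\}$ generate $N_R$ as an $RQ$-module, and \cref{lifting_relations} then yields $w' \in N$ with $N = \normal{w', W_A, W_B}_F$. Projecting to $A*B$ gives
\[
P = \normal{r, P\cap A, P\cap B}_{A*B}, \qquad r := \pi(w').
\]
To close the argument, I apply \cref{homology_injection} to $G$ with $K = \bar P$: when no proper non-empty subword of $w$ lies in $P$, the sufficient condition of that lemma is satisfied, giving the injection $\bigoplus_{AhP} H_1(\bar P \cap A^h, R) \oplus \bigoplus_{BhP} H_1(\bar P \cap B^h, R) \hookrightarrow H_1(\bar P, R) = 0$, whence $P \cap A$ and $P \cap B$ are $R$-perfect and we may take $P_A = P \cap A$, $P_B = P \cap B$.

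The main obstacle is the remaining case, in which some proper non-empty subword of $w$ lies in $P$. Here I would induct on the cyclic word length of $w$ via the Magnus-type hierarchy of \cref{sec:hierarchies}: \cref{amalgam} reduces the problem to the sub-one-relator-product $G_0 = (A_0*B_0)/\normal{w}$, and \cref{HNN} splits $G_0$ as an HNN extension whose relator has strictly smaller length. Combining \cref{tree_injection} with a Mayer--Vietoris argument on the action of $\bar P \cap G_0$ on the associated Bass--Serre tree (in the spirit of the inductive step of \cref{perfect_subgroup}) shows $\bar P \cap G_0$ satisfies the hypotheses of the corollary at the shorter-relator level. The delicate point is translating the $R$-perfect subgroups found inductively back into $R$-perfect subgroups of $A$ and $B$, using \cref{relator_form} and the fact that the vertex groups $J_0, J_1$ in \cref{HNN} split as free products of subgroups of $A, B$ and a free group.
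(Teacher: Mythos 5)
Your first step — reducing to \cref{lifting_relations} to obtain a normal generating set $P = \normal{r, P\cap A, P\cap B}$ — matches the paper's proof exactly, and is carried out correctly. The gap is in the second step. You correctly observe that a direct application of \cref{homology_injection} to the \emph{original} one-relator product $G = \frac{A*B}{\normal{w}}$, with normal subgroup $K = \bar P = P/\normal{w}$ and quotient $Q = H$, requires that proper non-empty subwords of $w$ survive in $H$; and you correctly observe that under the stated hypotheses this can fail (some letters of $w$ may already lie in $P$). You then flag this as ``the main obstacle'' and propose to resolve it by a fresh induction on word length through the hierarchy of \cref{sec:hierarchies}, invoking \cref{amalgam}, \cref{HNN}, and \cref{tree_injection} in a Mayer--Vietoris argument. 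This is both left unexecuted and unnecessary: the paper's proof circumvents the obstacle entirely by \emph{changing the relator}. Since \cref{lifting_relations} exhibits $H \isom \frac{A'*B'}{\normal{w'}}$ as a one-relator product of the locally indicable groups $A' = A/P_A$ and $B' = B/P_B$ with $w'$ cyclically reduced, Howie's \cref{inclusion} applied to \emph{this} one-relator product guarantees that no proper non-empty subword of $w'$ (hence of a compatibly chosen preimage $r$) becomes trivial in $H$. One then applies \cref{homology_injection} to the one-relator product with relator $r$ rather than $w$, where the subword hypothesis holds automatically — precisely the condition you were worried about. In short: the obstacle you identify is genuine if you insist on working with $w$, but the element $r$ produced by \cref{lifting_relations} is not merely a normal generator, it is a relator of a one-relator product presentation of $H$ over $A'$ and $B'$, and working with $r$ in place of $w$ dissolves the case split. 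The inductive machinery you sketch would, in effect, reprove \cref{perfect_subgroup} from scratch, and the ``delicate point'' you flag at the end of your sketch (translating $R$-perfect subgroups back through the hierarchy) is exactly the kind of difficulty that the relator-replacement trick avoids.
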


\begin{proof}
Let $F_A/\normal{W_A}$ and $F_B/\normal{W_B}$ be presentations for $A' = A/P_A$ and $B' = B/P_B$, where $P_A = P\cap A$ and $P_B = P\cap B$. By \cref{generating_lemma}, $W_A, W_B, w$ generate the $R$-relation module on $F_A*F_B$ for $H=G/P$. Since $H$ is locally indicable, so are $A', B'$. By the Burns--Hale Theorem \cite{BH72}, $H$ is right orderable. By \cref{lifting_relations}, there is a (cyclically reduced) word $w'\in A'*B'$ such that $H \isom \frac{A'*B'}{\normal{w'}}$. Choosing any preimage $r$ of $w'$ in $A*B$, we see that $P = \normal{P_A, P_B, r}$. By \cref{inclusion}, each proper non-empty subword of $r$ is non-trivial in $H$. Applying \cref{homology_injection}, we see that $P_A$ and $P_B$ are $R$-perfect.
\end{proof}

The first part of the following corollary follows directly from \cref{weak_R-admissibility}, the two further points follow from \cref{lifting_relations}.

\begin{corollary}[No $R$-relation gap]
\label{R-cyclic_module}
Let $R$ be a domain, $F$ a free group, $1\neq N\triangleleft F$ a normal subgroup such that the $R$-relation module $N_R$ for $G = F/N$ is generated as an $RG$-module by a single element. If $G$ is right orderable, then there is some $r\in N$ such that $N = \normal{r}$ and:
\begin{enumerate}
\item If $q\in N_R$ is a generator, there is some unit $u\in R$ and some $f\in F$ such that 
\[
q = u\otimes r^f[N, N].
\]
\item If $w\in N$ such that $q = 1\otimes w[N,N]$, then there is an integer $k\neq 0$ such that $1\otimes k\in R\otimes_{\Z}\Z$ is a unit and some $f\in F$ such that 
\[
q = 1\otimes (r^f)^k[N, N].
\]
\end{enumerate}
In particular, $G = F/\normal{r}$ is a one-relator group.
\end{corollary}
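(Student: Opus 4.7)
My plan is a two-stage argument. In the first stage I would produce an element $r\in N$ that normally generates $N$; in the second stage I would upgrade this to the detailed structural statements (1) and (2) about generators of $N_R$.

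For stage one, fix a free basis $S\subset F$ and a preimage $\widetilde r\in N$ of an $RG$-generator of $N_R$. The free $RG$-complex
\[
\begin{tikzcd}
RG\arrow[r,"d_2"] & \bigoplus_{s\in S}RG\arrow[r,"d_1"] & RG
\end{tikzcd}
\]
with Fox-calculus differentials as in \cref{relation_module_submodule} is algebraic, and $d_2$ is a single row with finitely many non-zero entries, hence lower trapezoidal; its diagonal entry is non-engulfing by \cref{ro}. Then \cref{weak_R-admissibility} yields a $G$-$2$-complex with a single $G$-orbit of $2$-cells whose attaching map is an element $r\in N$ with $N=\normal{r}$. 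In particular $G$ is already known to be a one-relator group at this point.

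For stage two, I would invoke \cref{lifting_relations} with $W_A=W_B=\emptyset$: the submodule of $N_R$ generated by $W_A\cup W_B$ is then zero, so the correction terms $x$ and $y$ appearing in its conclusions vanish and the two bullets specialise to exactly statements (1) and (2). To meet the hypotheses of \cref{lifting_relations} I need a splitting $F=F_A*F_B$ with both $F_A, F_B\hookrightarrow G$. Right orderability of $G$ forces torsion-freeness, so $r$ is not a proper power. After cyclic reduction, either $r$ involves only one letter of $S$ --- in which case $G$ is free, $N_R\cong RG$ is a free rank-one $RG$-module, and (1)-(2) follow directly from the description of units of $RG$ in \cref{domain_remark} combined with \cref{ro} --- or else $r$ involves at least two basis letters and we may partition $S=S_A\sqcup S_B$ so that both parts meet the support of $r$. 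Setting $F_A=\langle S_A\rangle$, $F_B=\langle S_B\rangle$, the relator $r$ is a cyclically reduced word of length $\geq 2$ in $F_A*F_B$, and Howie's \cref{inclusion}(1) applied to $G=(F_A*F_B)/\normal r$ yields $F_A, F_B\hookrightarrow G$; both are free, hence locally indicable.

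The main obstacle, and the reason the argument has to be two-stage, is that meeting the Magnus-type embedding hypothesis of \cref{lifting_relations} requires knowing in advance that $G$ is a one-relator group --- precisely the output of the first stage. Once that is granted, the element produced by \cref{lifting_relations} is the $r$ recorded in the final statement, and the absence of correction terms makes (1) and (2) immediate from its two conclusions.
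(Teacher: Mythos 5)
Your argument is correct and follows the same route as the paper: use \cref{weak_R-admissibility} applied to the Fox-derivative complex of \cref{relation_module_submodule} to produce $r$ with $N = \normal{r}$, then feed the resulting one-relator structure into \cref{lifting_relations} with $W_A = W_B = \emptyset$ to get the structural statements (1) and (2). The paper's proof is essentially a one-line pointer to these two results; you have usefully filled in the detail of how to choose the free-product decomposition $F = F_A * F_B$ so that the Magnus-type embedding hypothesis of \cref{lifting_relations} is met, including the degenerate case where the relator meets only one basis letter.
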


\begin{corollary}[Relation lifting]
\label{cyclic_module}
Let $F$ be a free group, $N\triangleleft F$ a normal subgroup such that the relation module $N_{\Z} = N/[N, N]$ for $G = F/N$ is generated as a $\Z G$-module by a single element $r$. If $G$ is right orderable, then there is some $w\in N$ such that:
\[
w[N, N] = r, \quad N = \normal{r}.
\]
\end{corollary}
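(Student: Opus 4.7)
The plan is to obtain \cref{cyclic_module} as an immediate specialization of the already-proven \cref{R-cyclic_module} to $R = \Z$. Since $\Z$ is a domain, all hypotheses of \cref{R-cyclic_module} are met: $G$ is right orderable and the $\Z$-relation module $N_\Z$ is cyclic as a $\Z G$-module. Applying that result produces some element $w_0 \in N$ with $N = \normal{w_0}$ such that, by the first clause of \cref{R-cyclic_module}, every generator $q$ of $N_\Z$ has the form $q = u \cdot w_0^f[N,N]$ for some unit $u \in \Z$ and some $f \in F$.

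Next, I would specialize this to the given generator $r$. Because $\Z^{\times} = \{\pm 1\}$, there exist $\epsilon \in \{\pm 1\}$ and $f \in F$ with $r = \epsilon \cdot w_0^f[N, N]$ in $N_\Z$. Set
\[
w = \bigl(w_0^f\bigr)^\epsilon = \bigl(w_0^\epsilon\bigr)^f \in N.
\]
Then on one hand $w[N,N] = \epsilon \cdot w_0^f[N,N] = r$, and on the other hand $w$ is an $F$-conjugate of $w_0^{\pm 1}$, so
\[
\normal{w} = \normal{w_0^{\pm 1}} = \normal{w_0} = N,
\]
which yields the desired lift.

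There is no real obstacle: the substantive content, namely producing \emph{some} normal generator of $N$ that lifts a relation-module generator (up to a unit and conjugation), is entirely contained in \cref{R-cyclic_module}, which in turn rests on \cref{weak_R-admissibility} and \cref{lifting_relations}. The only task remaining for \cref{cyclic_module} is the trivial bookkeeping above, which promotes existence of \emph{some} lift to existence of a lift of the \emph{prescribed} generator $r$ by absorbing the unit $\epsilon = \pm 1$ into the exponent and the conjugator $f$ into the element.
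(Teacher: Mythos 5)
Your proof is correct and matches the paper's intended route: the paper states \cref{cyclic_module} immediately after \cref{R-cyclic_module} with no separate argument, and the only derivation one can sensibly intend is precisely the one you give—specialize \cref{R-cyclic_module} to $R=\Z$, invoke clause (1) for the prescribed generator $r$, and absorb the unit $\pm 1$ into an inversion and the element $f$ into a conjugation, both of which preserve normal closures. (One small remark: the displayed conclusion of \cref{cyclic_module} reads $N = \normal{r}$, which is a typo for $N = \normal{w}$, and your proof correctly establishes $N = \normal{w}$.)
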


\subsection{An example and a conjecture}

\cref{R-cyclic_module} solves the relation gap conjecture for right orderable groups with cyclic relation module over an arbitrary domain. However, \cref{R-cyclic_module} cannot be improved to all groups with cyclic relation module over $\Q$ as the following example with torsion demonstrates.

\begin{example}
\label{example}
We claim that the following two-generator group is not one-relator but has cyclic relation module over $\Q$:
\[
Q_n \isom \langle a, b \mid [tat^{-1}, a] = a^n = 1\rangle
\]
This group appeared in \cite{BT07} where it is shown that, for appropriate choices of $p, q$, the relation module of $Q_p*Q_q$ on the obvious generators is generated by three elements. It is not one-relator as it contains a copy of $\Z/n\Z\times \Z/n\Z$, but torsion subgroups of one-relator groups are cyclic. Denote by $N = \normal{[tat^{-1}, a], a^n}\triangleleft F(a, b)$. We claim that 
\[
\Q\otimes_{\Z}N/[N, N] = \Q Q_n\cdot (1\otimes[tat^{-1}, a]a^{-n}[N, N]).
\]
It will suffice to show that $H_1(N/\normal{[tat^{-1}, a]a^{-n}}, \Q) = 0$ by \cref{generating_lemma}. In other words, that the normal closure $\normal{a_n}_H$ of $a^n$ in $H = F(a, b)/\normal{[tat^{-1}, a]a^{-n}}$ is $\Q$-perfect. But 
\[
[ta^nt^{-1}, a^n] =_H a^{n((n+1)^n - 1)}
\]
by \cite[Lemma 3.2]{BT07} and so the element $a^n[\normal{a^n}_H, \normal{a^n}_H]$ has finite order in $\normal{a^n}/[\normal{a^n}_H, \normal{a^n}_H]$. In particular, this implies that $H_1(\normal{a^n}_H, \Z)$ is torsion and so $H_1(\normal{a^n}_H, \Q) = 0$.
\end{example}

\cref{example} still leaves open the possibility that torsion-free groups with cyclic relation module over $\Q$ are one-relator groups.

\begin{conjecture}
Let $F$ be a free group, $N\triangleleft F$ a normal subgroup and suppose that $G = F/N$ is torsion-free. If $N_{\Q}
 = \Q\otimes_{\Z}N/[N, N]$ is cyclic as a $\Q G$-module, then $N$ is normally generated by a single element.
\end{conjecture}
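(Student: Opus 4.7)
The plan is to reduce the conjecture to \cref{R-cyclic_module} applied with $R=\Q$: if $G=F/N$ can be shown to be right orderable under the hypotheses, then that corollary immediately produces $r\in N$ with $N=\normal{r}$. By the Burns--Hale theorem, it therefore suffices to prove that every finitely generated non-trivial subgroup of $G$ surjects $\Z$, i.e.\ that $G$ is locally indicable. So the entire problem is concentrated in upgrading ``torsion-free with cyclic $\Q$-relation module'' to ``locally indicable''.

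The most natural route to local indicability is a hierarchy argument paralleling \cref{sec:hierarchies}. Given a finitely generated subgroup $H\leqslant G$, one would like to produce an epimorphism $G\to \Z$ adapted to the $\Q G$-generator of $N_\Q$, and then, from the associated $\Z$-cover of a $2$-complex realising a presentation of $G$, imitate the proof of \cref{HNN_one-relator} to obtain an HNN decomposition $G\cong H'*_{J}$ whose vertex group $H'$ again has cyclic $\Q$-relation module but of ``smaller complexity'' (for instance, coming from a $\Z$-domain supporting a cycle representing a shorter lift). Local indicability of $H$ would then follow from an inductive descent, with base case free or free-abelian vertex groups. The torsion-freeness hypothesis would have to enter here to guarantee that the vertex groups inherited from the splitting are themselves torsion-free, blocking the Bridson--Tweedale/Gruenberg--Linnell type phenomenon illustrated in \cref{example}.

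A second, more direct attack is to invoke \cref{lift_criterion_1-rel} directly. After clearing denominators one produces a cyclically reduced $w\in N$ whose image in $N_\Q$ is a non-zero $\Q G$-multiple of the given cyclic generator, hence is itself a generator; choose such $w$ of minimal word length. One would then try to show that no proper non-empty subword of $w$ lies in $N$, by arguing that any such subword, coupled with the cyclicity of $N_\Q$, would allow a rewriting producing a strictly shorter lift of a $\Q G$-generator, contradicting minimality. \cref{lift_criterion_1-rel} would then deliver $N=\normal{w}$.

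The main obstacle is twofold. First, \cref{lift_criterion_1-rel} requires $\Q G$ to be a domain, which for torsion-free $G$ is exactly Kaplansky's zero-divisor conjecture, so any proof along the second route is conditional on a long-standing open problem. Second, and more seriously, the torsion-free hypothesis must be used in an essential way: \cref{example} shows that cyclic $\Q$-relation module alone is insufficient, so the torsion-free assumption must propagate into an order-theoretic property (right orderability or $2$-unique products) strong enough to trigger \cref{ro} and the engulfing-free machinery of \cref{sec:admissibility}. At present, the paper's tools (Howie's Freiheitssatz, Weinbaum's theorem, the engulfing criterion) only activate once local indicability or right orderability is already known; finding the genuine input that extracts such an ordering from the combination of torsion-freeness and a cyclic $\Q$-relation module appears to be the crux.
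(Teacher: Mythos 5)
This statement is labelled a \emph{conjecture} in the paper, and the paper offers no proof of it: it is stated as an open problem left over after \cref{example} shows that the torsion hypothesis cannot simply be dropped. There is therefore no ``paper's own proof'' to compare against, and you should not expect to find one.

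Your proposal is, correctly, not a proof either; it is an honest reconnaissance of the two natural lines of attack, and your assessment of why each one stalls is sound. The first route (run the hierarchy machinery of \cref{sec:hierarchies} to bootstrap local indicability) indeed requires the very conclusion you are trying to reach before the Freiheitssatz-style tools (\cref{inclusion}, \cref{HNN}) can be invoked at each stage; nothing in the paper manufactures a splitting or an epimorphism to $\Z$ from torsion-freeness plus a cyclic $\Q$-relation module alone. The second route (apply \cref{lift_criterion_1-rel} to a minimal-length lift of the generator) has exactly the two obstructions you name: it presupposes that $\Q G$ is a domain, which for torsion-free $G$ is the Kaplansky zero-divisor conjecture, and even granting that, the paper's engulfing criterion (\cref{R-relative_main}, \cref{inductive_lemma}) is only known to hold under a $2$-unique-products or right-orderability hypothesis (\cref{2-unique_prod}, \cref{ro}), which is strictly stronger than the absence of zero divisors. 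Your closing diagnosis --- that the crux is to extract an order-theoretic property from torsion-freeness together with the module-theoretic hypothesis, and that none of the paper's lemmas accomplishes this --- is exactly right, and it is precisely why the paper leaves this as a conjecture rather than a corollary.

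The one thing to flag is presentational: since this is an open problem, a ``proof proposal'' framing is misleading. What you have written is a useful discussion of why the conjecture is not a consequence of \cref{R-cyclic_module} and where genuinely new input would be needed; it should be presented as such, not as a proof attempt.
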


\bibliographystyle{amsalpha}
\bibliography{bibliography}

\end{document}